\numberwithin{equation}{section}
\def\accentsfrancais{applemac}
\newtheorem{thm}{Theorem}
\newtheorem{theorem}[thm]{Theorem}
\numberwithin{thm}{section}
\newtheorem{lemma}[thm]{Lemma}
\newtheorem{proposition}[thm]{Proposition}
\newtheorem{rem}[thm]{Remark}
\newtheorem{remark}[thm]{Remark}
\newtheorem{definition}[thm]{Definition}
\def\ds{\displaystyle}
\def\emptyset{/\kern-.51em o}
\def\eq{\mathop{\vrule height2,6pt depth-2,3pt
         width -1pt\kern 0pt =}}
\let\norbali\normalbaselines
\def\anorbali{\norbali\advance\lineskip\jot
\advance\baselineskip\jot\advance\lineskiplimit\jot}
\def\ouvre{\let\normalbaselines\anorbali}
\def\R{{\mathbb{R}}}
\def\Pc{{\mathcal{P}}}
\def\Uc{{\mathcal{U}}}
\def\Vc{{\mathcal{V}}}
\def\Wc{{\mathcal{W}}}
\def\Rc{{\mathcal{R}}}
\def\Zc{{\mathcal{Z}}}
\def\Yc{{\mathcal{Y}}}
\def\wc{{\;\rightharpoonup\;}}
\def\N{{\mathbb{N}}}
\def\Ge{{\bf e}}
\def\Gn{{\bf n}}
\def\GD{{\bf D}}
\def\GE{{\bf E}}
\def\GG{{\bf G}}
\def\GH{{\bf H}}
\def\GI{{\bf I}}
\def\GR{{\bf R}}
\def\GS{{\bf S}}
\def\GU{{\bf U}}
\def\GV{{\bf V}}
\def\UU{\mathbb{U}}
\def\VV{\mathbb{V}}
\def\e{\varepsilon}
\def\Te{{\cal T}_\varepsilon}
\def\TRe{{\mathfrak{T}_\e}}
\def\O{\Omega}
\def\o{\omega}
\def\wh{\widehat }
\def\wt{\widetilde }
\title{Asymptotic Behavior for Textiles in von-K{\'a}rm{\'a}n regime}
\author{Georges Griso\footnote{Sorbonne Universit\'e, CNRS, Universit\'e de Paris, Laboratoire Jacques-Louis Lions (LJLL), F-75005 Paris, France, griso@ljll.math.upmc.fr}, Julia Orlik\footnote{Fraunhofer ITWM, 67663 Kaiserslautern, Germany, orlik@itwm.fhg.de}, Stephan Wackerle \footnote{Fraunhofer ITWM, 67663 Kaiserslautern, Germany, wackerle@itwm.fhg.de}}
\begin{document}
	\maketitle
	\begin{abstract}
		
		The paper is dedicated to the investigation of simultaneous homogenization and dimension reduction of textile structures as elasticity problem with an energy in the von-K\'{a}rm\'{a}n-regime. An extension for deformations is presented allowing to use the decomposition of plate-displacements. The limit problem in terms of displacements is derived with the help of the unfolding operator and yields in the limit the von-K\'{a}rm\'{a}n plate with linear elastic cell-problems. It is shown, that for homogeneous isotropic beams in the structure, the resulting plate is orthotropic. As application of the obtained limit plate we study the buckling behavior of orthotropic textiles.
	\end{abstract}
	{\bf Keyword}: Homogenization, periodic unfolding method, dimension reduction, von-K\'{a}rm\'{a}n orthotropic plate, Energy minimization under pre-strain, buckling under homogenized pre-strain \\
	{\bf Mathematics Subject Classification (2010)}: 35B27, 35J86, 47H05, 74Q05, 74B05, 74K10, 74K20.

\section{Introduction}

In contrast to our first paper about homogenization of textiles \cite{KT}, where a geometrical linear elasticity is considered, we investigate here textiles with von-K\'{a}rm\'{a}n energy. The von-K\'{a}rm\'{a}n model is a nonlinear plate model, which is stated with respect to displacements and widely used by mathematicians and engineers, see \cite{Shell1,Shell2,CiarletKarman,ciarlet1997mathematical,FJMKarman,Friesecke06,Puntel}. To achieve the von-K\'{a}rm\'{a}n model in the limit we consider an elastic energy of order
$||e(u_\e)||_{L^2(\O^*_\e)} \le C\e^{5/2}$, which is in consensus with \cite{BCM,CiarletKarman,FJMKarman,Friesecke06}. 
A simultaneous homogenization and dimension reduction of a von-K\'{a}rm\'{a}n plate was already studied in \cite{NV}, however in our paper we give the corrector results related on the periodic topology of the textile.

Since the von-K\'{a}rm\'{a}n plate arises as $\Gamma$-Limit from geometrical nonlinear problems, it is necessary to consider initially deformations. Hence, the homogenization of the textile for von-K\'{a}rm\'{a}n energy begins with the extension of a deformation into the holes of the structure. This extension is applied onto the deformations of the textile beam structure for glued beams. Due to the fact that the limit-plate is stated with respect to displacements we directly introduce the decomposition of the displacement associated to the extended deformation, see \cite{Shell1,GStRods,GDecomp}. For the elementary displacements we establish the Korn-type estimates giving rise to the asymptotic behavior of the fields. To derive the homogenized model the unfolding and rescaling operator (see for instance \cite{CDGHoles,CDG,GHJ,KT}) accounting for both homogenization and dimension reduction is used. 
For the von-K\'{a}rm\'{a}n-plate studied here it is necessary to investigate also the nonlinear term in the Green-St.Venant strain tensor. The additional term yield the von-K\'{a}rm\'{a}n nonlinearities in the limit.
The derived asymptotic limits allow to prove with arguments of $\Gamma$-convergence to show that the limit energy in in fact of von-K\'{a}rm\'{a}n-type. It is proven, that the homogenized limit energy admits minima. The uniqueness, though, is not provable.

Although
the initial and the homogenized problem are nonlinear, the cell-problems for the von-K\'{a}rm\'{a}n
plate are linear and in fact the same as achieved for a linear elastic plate. Furthermore, the cell-problems yield for isotropic homogeneous beams an orthotropic plate, which is valid for the von-K\'{a}rm\'{a}n energy and linear elasticity.

The final part of the paper is devoted to homogenization of the pre-stress in yarns and modeling of the buckling of the von-K\'{a}rm\'{a}n plate under pre-strain, like in \cite{LM,CM,Puntel,berd}, but for an orthotropic plate. 

\section{ Preliminary extension results for deformations and displacements}\label{sec:prelim_ext}
In this section,  $\O$ and $\O'$ are two bounded domains in $\R^n$ containing the origin with Lipschitz boundaries and such that $\O\subset \O'$. For every $\e>0$, we denote $\O_\e=\e\,\O$ and $\O'_\e=\e\, \O'$. In Lemma \ref{L1} we prove an extension result for deformations in $H^1(\O_\e)^n$.\\[1mm]
The lemma below is based on the rigidity theorem  obtained by  G. Friesecke, R. James and S. M{\"{u}}ller in \cite{RigidityTH}. Here, for the starshaped open sets with respect to a ball, we use a variant of this theorem which explicitly gives the dependence of the constants in the estimates in terms of only two parameters which depend on the geometry of the domain: its diameter and the radius of the ball (see \cite{bg2,G-DMT}).
	\begin{lemma}\label{L1}  For every deformation $v$ in $H^1(\O_\e)^n$ there exists a deformation $\wt v$ in $H^1(\O'_{\e})^n$ satisfying
		\begin{equation}\label{ExtensionL1}
		\begin{aligned}
		&\wt{v}_{|\O_\e}=v,\\
		&\big\|\hbox{dist}\big(\wt v, SO(n)\big)\big\|_{L^2(\O'_{\e})}\leq C\big\|\hbox{dist}(v, SO(n))\big\|_{L^2(\O_\e)}.
		\end{aligned}
		\end{equation}		The constant does not depend on $\e$.
	\end{lemma}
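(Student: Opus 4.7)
The plan is to reduce the problem to the fixed scale $\e=1$ by a change of variables, then combine a quantitative Friesecke--James--M\"uller (FJM) rigidity estimate on $\O$ with a standard Sobolev extension from $\O$ to $\O'$.

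First I would rescale: set $y=x/\e$ and $V(y)=v(\e y)/\e$. A direct computation gives $\nabla_yV(y)=\nabla_xv(\e y)$, so the pointwise distance of the gradient to $SO(n)$ is preserved, and both $\|\nabla V\|_{L^2(\O)}$ and $\|\hbox{dist}(\nabla V,SO(n))\|_{L^2(\O)}$ differ from their $x$-counterparts on $\O_\e$ only by the common factor $\e^{-n/2}$. Hence it suffices to prove the statement at scale one with a constant depending only on the fixed pair $(\O,\O')$; unscaling through $\wt v(x)=\e\,\wt V(x/\e)$ then produces the $\e$-independent bound on $\O'_\e$.

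At scale one, I would apply the variant of the FJM rigidity theorem cited from \cite{bg2,G-DMT} to obtain $R\in SO(n)$ and $a\in\R^n$ with
$$\|\nabla V-R\|_{L^2(\O)}+\|V-(Ry+a)\|_{L^2(\O)}\le C\,\|\hbox{dist}(\nabla V,SO(n))\|_{L^2(\O)},$$
the constant depending only on the diameter of $\O$ and the radius of an interior ball with respect to which $\O$ is starshaped (decomposing $\O$ into finitely many starshaped pieces if necessary). Setting $W=V-(R\,\cdot+a)\in H^1(\O)^n$ and invoking any bounded Sobolev extension operator $\mathcal{E}:H^1(\O)^n\to H^1(\O')^n$ (which exists since both boundaries are Lipschitz and $\O\subset\O'$), I would define
$$\wt V(y)=\mathcal{E}(W)(y)+Ry+a\qquad\text{on }\O'.$$
By construction $\wt V_{|\O}=V$, and since $R\in SO(n)$ the pointwise estimate
$$\hbox{dist}(\nabla\wt V(y),SO(n))\le|\nabla\wt V(y)-R|=|\nabla\mathcal{E}(W)(y)|$$
holds a.e.\ on $\O'$. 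Squaring, integrating, and combining the continuity of $\mathcal{E}$ with the rigidity estimate closes the bound at scale one; the scaling step then yields \eqref{ExtensionL1}.

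The main obstacle is ensuring that the constant truly does not depend on $\e$. Both ingredients used at scale one (the rigidity theorem and the Sobolev extension) have constants that, if applied naively on $\O_\e$ and $\O'_\e$, would depend on their Lipschitz characters and blow up as $\e\to0$. The reduction to $\e=1$ is precisely what circumvents this: once everything is phrased on the fixed geometry $(\O,\O')$, the constants become absolute and the unscaling is simply a bookkeeping step. The explicit dependence given by the starshaped variant of FJM is what makes this reduction transparent, but it is the fixed-geometry setting of $(\O,\O')$ that is doing the essential work.
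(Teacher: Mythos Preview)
Your proposal is correct and follows essentially the same strategy as the paper: approximate $v$ by a rigid motion $a+Rx$ via the FJM rigidity theorem (on a starshaped decomposition of $\O$), extend the remainder $v-(a+Rx)$ by a Sobolev extension operator, and add the rigid motion back. The only cosmetic difference is that you first rescale to the fixed pair $(\O,\O')$ and work there, whereas the paper stays at scale $\e$ and instead tracks the $\e$-dependence of the FJM and extension constants explicitly; the paper is also more explicit about the chaining of the starshaped pieces that you mention only in passing.
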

\begin{proof} First, some classical recalls and then the proof. 
\begin{itemize}
\item (i) Since $\O$ is a bounded domain with Lipschitz boundary, there exist $N\in \N^*$, $R_1$ and $R_2$ two strictly positive constants and  a finite set $\{{\cal O}_1, \ldots, {\cal O}_N\}$ of open subsets of $\O$, each of diameter less than $R_1$ and  starshaped with respect to a  ball of radius $R_2$ ($B(A_i, R_2)$, $A_i\in {\cal O}_i$)  such that
$$\O=\bigcup_{k=1}^N{\cal O}_k.$$ As a consequence, there exists $r$ such that for every ${\cal O}_i$, $i\in\{1,\ldots,N\}$ there exists a chain from ${\cal O}_1$ to ${\cal O}_i$
$${\cal O}_{l_1}={\cal O}_1, \;\; {\cal O}_{l_2},\;\;\ldots,  \;\; {\cal O}_{l_p}={\cal O}_i,\qquad p\in \{1,\ldots,N\}$$ such that, if $p>1$ one has  ${\cal O}_{l_j}\cap {\cal O}_{l_{j+1}}$, $j\in\{1,\ldots, p-1\}$,  contains a ball of radius $r$.
\item (ii) Let ${\cal O}$ be an open set in $\R^n$ included in the  ball $B(A; R_1)$  and starshaped with respect to the ball  $B(A,R_2)$, $R_1>0,\; R_2>0$. Theorem II.1.1 in \cite{bg2}  claims that for every deformation $v\in H^1({\cal O})^n$, there exist a matrix ${\bf R}\in SO(n)$ and ${\bf a}\in \R^n$ such that
$$
\|v-{\bf a}-\GR\, x\|_{L^2({\cal O})}\leq CR_1 \|dist(\nabla v, SO(3))\|_{L^2({\cal O})},\quad \|\nabla v-\GR\|_{L^2({\cal O})}\leq C\|dist(\nabla v, SO(n))\|_{L^2({\cal O})}.
$$ The constant $C$ depends only on $\ds {R_1\over R_2}$ and $n$.\\[1mm]
Transform ${\cal O}$ by a dilation of ratio $\e>0$ and center $A$, the above result gives: for every deformation $v\in H^1({\cal O}_\e)^n$ where ${\cal O}_\e\doteq \e {\cal O}$, there exist a matrix ${\bf R}\in SO(n)$ and ${\bf a}\in \R^n$ such that
$$
\|v-{\bf a}-\GR\, x\|_{L^2({\cal O}_\e)}\leq C \e  \|dist(\nabla v, SO(n))\|_{L^2({\cal O}_\e)},\quad \|\nabla v-\GR\|_{L^2({\cal O}_\e)}\leq C \|dist(\nabla v, SO(n))\|_{L^2({\cal O}_\e)}.
$$ The constant $C$ does not depend on $\e$.
\item  (iii) $\O$ and $\O'$  being two bounded domains in $\R^n$ with Lipschitz boundaries and such that $\O\subset \O'$. There exists a continuous linear extension operator ${\cal P}'$ from $H^1(\O)^n$ into $H^1(\O')^n$ satisfying
$$
\forall v\in H^1(\O)^n,\quad {\cal P}'(v)_{|\O}=v,\quad \|{\cal P}'(v)\|_{L^2(\O')}\leq C\|v\|_{L^2(\O)},\quad \|{\cal P}(v)\|_{H^1(\O')}\leq C\|v\|_{H^1(\O)}.
$$ If we transform $\O$ and $\O'$ by the same dilation of ratio $\e$ (and center $O\in \O$), this extension operator induces an extension operator ${\cal P}'_\e$ from $H^1(\O_\e)^3$ into $H^1(\O'_\e)^3$ satisfying
$$\forall v\in H^1(\O_\e)^3,\quad 
\left\{\begin{aligned}
&{\cal P}'_\e(v)_{|\O_\e}=v,\qquad \|{\cal P}'_\e(v)\|_{L^2(\O'_\e)}\leq C\|v\|_{L^2(\O_\e)},\\
&\|{\cal P}'_\e(v)\|_{L^2(\O'_\e)}+\e \|\nabla {\cal P}'_\e(v)\|_{L^2(\O'_\e)}\leq C\big(\|v\|_{L^2(\O_\e)}+\e \|\nabla v\|_{L^2(\O_\e)}\big).
\end{aligned}\right.
$$ The constants do not depend on $\e$.
\end{itemize}
Now, consider a deformation $v\in H^1(\O_\e)^n$. We apply (ii) with the open sets ${\cal O}_{i,\e}=A_i+\e ({\cal O}_i-A_i)$, there exist  matrices ${\bf R}_i\in SO(n)$ and  vectors ${\bf a}_i\in \R^n$ such that
$$
\begin{aligned}
&\|v-{\bf a}_i-\GR_i\, x\|_{L^2(\e{\cal O}_{i,\e})}\leq C \e  \|dist(\nabla v, SO(n))\|_{L^2(\e {\cal O}_{i,\e})},\\
&\|\nabla v-\GR_i\|_{L^2(\e {\cal O}_{i,\e})}\leq C \|dist(\nabla v, SO(n))\|_{L^2(\e{\cal O}_{i,\e})}.
\end{aligned}
$$ The constant $C$ does not depend on $\e$.\\[1mm]
 Then, using the second part of (i), we compare ${\bf R}_i$ to ${\bf R}_1$ as well as  ${\bf a}_i$ to ${\bf a}_1$, $i\in\{1,\ldots,N\}$. As a consequence, one obtains that
$$
\|v-{\bf a}_1-\GR_1\, x\|_{L^2(\O_\e)}\leq C \e  \|dist(\nabla v, SO(3))\|_{L^2(\O_\e)},\quad \|\nabla v-\GR_1\|_{L^2(\O_e)}\leq C \|dist(\nabla v, SO(n))\|_{L^2(\O_\e)}.
$$ The constants do not depend on $\e$.\\[1mm]
Now, we define the extension of $v$. We set
$$
\wt{v}={\cal P}'_\e(v-{\bf a}_1-\GR_1\, x)+{\bf a}_1+\GR_1\, x\qquad \hbox{a.e. in }\O'_\e.
$$	We easily check \eqref{ExtensionL1}.	
\end{proof}

\section{The structure}
\subsection{ Parameterization of the yarns}
To see the parametrization of yarns and the structure we refer to \cite{KT}. Nevertheless, we shortly repeat the most important definitions and results.
The middle line of a beam is paramtrized by rescaled function $\ds \Phi_\e = \e \Phi(\frac{z}{\e})$ of
\begin{align}
\Phi(z) = \begin{cases}
-\kappa, &\hbox{if } z \in [0,\kappa],\\
\kappa\Big(6{(z -\kappa)^2\over (1-2\kappa)^2}-4{(z - \kappa)^3\over (1 - 2\kappa )^3} - 1 \Big) & \hbox{if } z\in [\kappa, 1-\kappa],\\
\kappa & \hbox{if } z\in [1-\kappa,1],\\
\Phi(2-z) &\hbox{if } z\in [1,2].
\end{cases}
\end{align}
Then the beams in the structure are defined by
\begin{align*}
&P^{(1)}_r\doteq \bigl\{z\in \R^3\;|\; z_1\in (0,L),\enskip (z_2,z_3)\in  (-\kappa \e,\kappa \e)^2\big\},\qquad P^{(2)}_r\doteq  \bigl\{z\in \R^3\;|\; z_2\in (0,L),\enskip (z_1,z_3)\in  (-\kappa 
\e,\kappa \e)^2\big\}.
\end{align*}
for the reference beams in the two directions. Then the curved beams are defined by
\begin{equation*}
\begin{aligned}
& \Pc^{(1,q)}_{\e}\doteq\Big\{x\in \R^3\; |\; x = \psi_\e^{(1,q)}(z),\enskip z\in P^{(1)}_r\Big\},\qquad  \Pc^{(2,p)}_{\e}\doteq\Big\{x\in \R^3\; |\; x = \psi_\e^{(2,p)}(z),\enskip z\in P^{(2)}_r\Big\},\\
\end{aligned}
\end{equation*}
with the diffeomorphisms
\begin{align*}
&\psi_\e^{(1,q)}(z) \doteq M^{(1,q)}_\e(z_1) + z_2\Ge_2 + z_3\Gn_\e^{(1,q)}(z_1),\qquad\psi_\e^{(2,p)}(z) \doteq M^{(2,p)}_\e(z_2) + z_1\Ge_1 + z_3\Gn_\e^{(2,p)}(z_2),
\end{align*}
and the corresponding middle lines
\begin{equation*}
\begin{aligned}
&M^{(1,q)}_\e(z_1) \doteq z_1\Ge_1 + q\e\Ge_2 + (-1)^{q+1}\Phi_{\e}(z_1)\Ge_3,\qquad M^{(2,p)}_\e(z_2) \doteq p\e\Ge_1 + z_2\Ge_2 + (-1)^p \Phi_{\e}(z_2)\Ge_3.
\end{aligned}
\end{equation*}

\subsection{Parameterization of the whole structure}

Denote $\O^*_{\e}$ the whole structure (see \cite{KT} for details)
\begin{equation}\label{STR}
\O^*_{\e}\doteq\O_\e\cap\Big(\bigcup_{p=0}^{2N_\e}\Pc^{(1,q)}_{\e}\cup\bigcup_{q=0}^{2N_\e}\Pc^{(2,p)}_{\e}\Big),\qquad \O_\e\doteq\o\times (-2\kappa \e,2\kappa\e),\qquad \o=(0,L)^2.
\end{equation} 

\begin{figure}\centering
		\begin{tikzpicture}[remember picture,
		x={(1cm,0cm)},
		y={(0cm,1cm)},
		z={({0.5*cos(45)},{0.5*sin(45)})},
		fill opacity = 1]
		
		%		\clip (-2,-1) rectangle (10,10);

		% back beam
		\begin{scope}[thick]

		%outlines + fill
		\draw[fill = gray] (0,0,5)--(0,0,6)--(0,1,6)--(0,1,5)--cycle;
		\draw[name path = frontlow]  (6,1,5)--(5,1,5) to[out=180,in=0] (1,0,5) -- (0,0,5) ;
		\draw[name path = fronthigh]  (6,2,5)--(5,2,5) to[out=180,in=0] (1,1,5) -- (0,1,5);
		\draw[name path = backlow]   (6,1,6)--(5,1,6) to[out=180,in=0] (1,0,6) -- (0,0,6) ;
		\draw[name path = backhigh] (6,2,6)--(5,2,6) to[out=180,in=0] (1,1,6) -- (0,1,6) ;
		
		\tikzfillbetween[of=frontlow and fronthigh]{ left color=gray!20!white, right color = gray, shading angle=60,};
		\tikzfillbetween[of=fronthigh and backhigh]{ left color=gray, right color = gray!20!white, shading angle=180,};
		
		%redraw lines
		%	\draw[name path = frontlow] (6,1,5)--(5,1,5) to[out=180,in=0] (1,0,5) -- (0,0,5);
		%	\draw[name path = fronthigh] (6,2,5)--(5,2,5) to[out=180,in=0] (1,1,5) -- (0,1,5);
		%	\draw[name path = backhigh] (6,2,6)--(5,2,6) to[out=180,in=0] (1,1,6) -- (0,1,6);
		\draw[name path = frontlow] (6,1,5)--(5,1,5) to[out=180,in=0] (1,0,5) -- (0,0,5) ;
		\draw[name path = fronthigh] (6,2,5)--(5,2,5) to[out=180,in=0] (1,1,5) -- (0,1,5 );
		\draw[name path = backhigh]  (6,2,6)--(5,2,6) to[out=180,in=0] (1,1,6) -- (0,1,6) ;
		
		%
		%	\draw[fill = gray] (6,1,5)--(6,1,6)--(6,2,6)--(6,2,5)--cycle;
		\draw[fill = gray] (6,0,5)--(6,0,6)--(6,1,6)--(6,1,5)--cycle;
		\end{scope}

		% left beam
		\begin{scope}[thick,%dashed
		]
		
		%outlines + fill
		\draw[fill=gray] (1,2,6)--(1,1,6)--(0,1,6)--(0,2,6)--cycle;
		\draw[name path = righthigh] (1,1,0)--(1,1,1) to[out=55,in=-135] (1,2,5)--(1,2,6) ;
		\draw[name path = rightlow] (1,0,0)--(1,0,1) to[out=45,in=-125] (1,1,5)--(1,1,6) ;
		\draw[name path = lefthigh] (0,1,0)--(0,1,1) to[out=55,in=-135] (0,2,5)--(0,2,6) ;
		\draw[name path = leftlow] (0,0,0)--(0,0,1) to[out=45,in=-125] (0,1,5)--(0,1,6) ;
		\draw[fill=gray] (0,0,0)--(0,1,0)--(1,1,0)--(1,0,0)--cycle;
		
		\tikzfillbetween[of=rightlow and righthigh]{ left color=gray!20!white, right color = gray, shading angle=60,};
		\tikzfillbetween[of=lefthigh and righthigh]{ left color=gray, right color = gray!20!white, shading angle=180,};
		
		%redraw lines
		\draw[name path = righthigh] (1,1,0)--(1,1,1) to[out=55,in=-135] (1,2,5)--(1,2,6) ;
		\draw[name path = rightlow] (1,0,0)--(1,0,1) to[out=45,in=-125] (1,1,5)--(1,1,6) ;
		\draw[name path = lefthigh] (0,1,0)--(0,1,1) to[out=55,in=-135] (0,2,5)--(0,2,6) ;
		\draw[] (0,2,6)--(1,2,6)--(1,1,6);
		\end{scope}

		%front beam
		\begin{scope}[thick,%dashed
		]
		%outlines + fill
		\draw[name path = frontlow]  (0,1,0)--(1,1,0) to[out=0,in=180] (5,0,0) -- (6,0,0) ;
		\draw[name path = fronthigh] (0,2,0)--(1,2,0) to[out=0,in=180] (5,1,0) -- (6,1,0) ;
		\draw[name path = backlow]    (0,1,1)--(1,1,1) to[out=0,in=180] (5,0,1) -- (6,0,1) ;
		\draw[name path = backhigh]   (0,2,1)--(1,2,1) to[out=0,in=180] (5,1,1) -- (6,1,1);
		
		%	\draw[fill=gray ] (0,1,0)--(0,2,0)--(0,2,1)--(0,1,1)--cycle;
		\draw[fill=gray ] (0,1,0)--(0,2,0)--(0,2,1)--(0,1,1)--cycle;
		
		\tikzfillbetween[of=frontlow and fronthigh]{ left color=gray!20!white, right color = gray, shading angle=60,};
		\tikzfillbetween[of=fronthigh and backhigh]{ left color=gray, right color = gray!20!white, shading angle=180,};

		%redraw lines			
		%	\draw[name path = frontlow] (0,1,0)--(1,1,0) to[out=0,in=180] (5,0,0) -- (6,0,0);
		%	\draw[name path = fronthigh] (0,2,0)--(1,2,0) to[out=0,in=180] (5,1,0) -- (6,1,0);
		%	\draw[name path = backhigh] (0,2,1)--(1,2,1) to[out=0,in=180] (5,1,1) -- (6,1,1);
		\draw[name path = frontlow]   (0,1,0)--(1,1,0) to[out=0,in=180] (5,0,0) -- (6,0,0) ;
		\draw[name path = fronthigh]  (0,2,0)--(1,2,0) to[out=0,in=180] (5,1,0) -- (6,1,0) ;
		\draw[name path = backhigh]  (0,2,1)--(1,2,1) to[out=0,in=180] (5,1,1) -- (6,1,1);
		%	\draw[] (0,1,0)--(0,2,0)--(0,2,1);
		
		%
		%	\draw[fill=gray] (6,0,0)--(6,1,0)--(6,1,1)--(6,0,1)--cycle;
		\draw[fill=gray] (6,0,0)--(6,1,0)--(6,1,1)--(6,0,1)--cycle;
		\end{scope}

		%right beam
		\begin{scope}[thick]
		
		\draw[] (6,0,6)--(6,1,6)-- (5,1,6)-- (5,0,6)--cycle;
		
		%outlines + fill
		\draw[name path = rightlow] (6,1,0)--(6,1,1) to[out=45,in=-145] (6,0,5) -- (6,0,6);
		\draw[name path = righthigh] (6,2,0)--(6,2,1) to[out=35,in=-145] (6,1,5) -- (6,1,6);
		\draw[name path = leftlow] (5,1,0)--(5,1,1) to[out=45,in=-135] (5,0,5) -- (5,0,6);
		\draw[name path = lefthigh] (5,2,0)--(5,2,1) to[out=35,in=-145] (5,1,5) -- (5,1,6);

		\tikzfillbetween[of=rightlow and righthigh]{ left color=gray!20!white, right color = gray, middle color=black!40!gray,};
		\tikzfillbetween[of=righthigh and lefthigh]{ top color=gray, bottom color = gray!20!white, middle color=black, shading angle=0};
		
		%redraw lines
		\draw[name path = rightlow] (6,1,0)--(6,1,1) to[out=45,in=-145] (6,0,5) -- (6,0,6);
		\draw[name path = righthigh] (6,2,0)--(6,2,1) to[out=35,in=-145] (6,1,5) -- (6,1,6);
		%	\draw[name path = leftlow] (5,1,0)--(5,1,1) to[out=45,in=-135] (5,0,5) -- (5,0,6);
		\draw[name path = lefthigh] (5,2,0)--(5,2,1) to[out=35,in=-145] (5,1,5) -- (5,1,6);

		\draw[fill=gray] (5,1,0)--(6,1,0)--(6,2,0)--(5,2,0)--cycle;
		\end{scope}

		%%clipping ding
		\begin{scope}[thick]
		%			\clip[draw=none](5,1,5)--(6,1,5)--(6,0,5)--(6,0,6)--(6,1,6)--(5,1,6)--cycle; %% for opacity <1
		\clip[draw=none](4,0,5)--(7,0,5)--(7,3,5)--(4,3,5)--cycle; %% for opacity =1
		
		\draw[fill = gray] (0,0,5)--(0,0,6)--(0,1,6)--(0,1,5)--cycle;
		\draw[name path = frontlow]  (6,1,5)--(5,1,5) to[out=180,in=0] (1,0,5) -- (0,0,5) ;
		\draw[name path = fronthigh]  (6,2,5)--(5,2,5) to[out=180,in=0] (1,1,5) -- (0,1,5);
		\draw[name path = backlow]   (6,1,6)--(5,1,6) to[out=180,in=0] (1,0,6) -- (0,0,6) ;
		\draw[name path = backhigh] (6,2,6)--(5,2,6) to[out=180,in=0] (1,1,6) -- (0,1,6) ;
		
		\tikzfillbetween[of=frontlow and fronthigh]{ left color=gray!20!white, right color = gray, shading angle=60,};
		\tikzfillbetween[of=fronthigh and backhigh]{ left color=gray, right color = gray!20!white, shading angle=180,};
		
		%redraw lines
		%	\draw[name path = frontlow] (6,1,5)--(5,1,5) to[out=180,in=0] (1,0,5) -- (0,0,5);
		%	\draw[name path = fronthigh] (6,2,5)--(5,2,5) to[out=180,in=0] (1,1,5) -- (0,1,5);
		%	\draw[name path = backhigh] (6,2,6)--(5,2,6) to[out=180,in=0] (1,1,6) -- (0,1,6);
		\draw[name path = frontlow] (6,1,5)--(5,1,5) to[out=180,in=0] (1,0,5) -- (0,0,5) ;
		\draw[name path = fronthigh] (6,2,5)--(5,2,5) to[out=180,in=0] (1,1,5) -- (0,1,5 );
		\draw[name path = backhigh]  (6,2,6)--(5,2,6) to[out=180,in=0] (1,1,6) -- (0,1,6) ;

		\draw[fill = gray] (6,1,5)--(6,1,6)--(6,2,6)--(6,2,5)--cycle;
		%	\draw[fill = gray] (6,0,5)--(6,0,6)--(6,1,6)--(6,1,5)--cycle;
		
		\end{scope}
		
		\begin{scope}
		\draw[|-|,thick] (0.5,-0.6,0) -- (5.5,-0.6,0) node [midway, below, sloped] (TextNode1) {\Large $1$};
		\draw[dashed,thick] (0.5,-0.6,0) -- (0.5,0.5,0);
		\draw[dashed,thick] (5.5,-0.6,0) -- (5.5,1.5,0);
		
		\draw[|-|,thick] (0,-0.16,0) -- (1,-0.16,0) node [pos=.25,yshift=-1pt,xshift=-2pt, below, sloped] (TextNode2) {\Large $2\kappa$};
		\draw[|-|,thick] (-0.16,0,0) -- (-0.16,1,0) node [midway, above, sloped] (TextNode3) {\Large $2\kappa$};
		
		\draw[|-|,thick] (6,-0.5,0.5) -- (6,-0.5,5.5) node [midway, below, sloped] (TextNode4) {\Large $1$};
		\draw[dashed,thick] (6,-0.5,0.5)-- (6,0.5,0.5);
		\draw[dashed,thick] (6,-0.5,5.5)-- (6,1.5,5.5);
		\end{scope}

		\end{tikzpicture}
		\caption{The domain $Y^*\subset Y=(0,1)^2\times (-2\kappa,2\kappa)$, a quarter of the periodicity cell of the full structure.}\label{fig:ystar}
\end{figure}
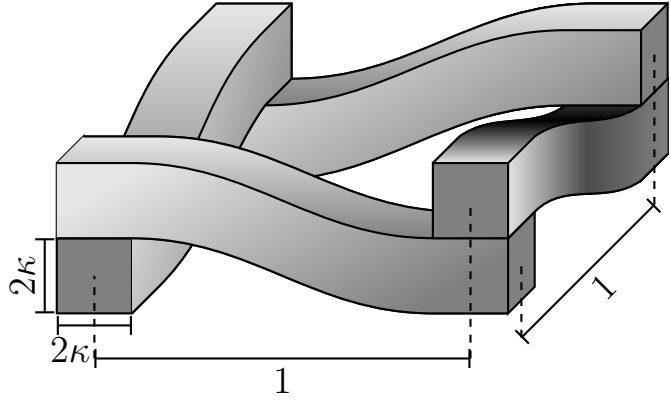

\subsection{An extension result}
	The presented extension heavily depends on the fact that the beams are glued. For a more general contact condition as in \cite{KT} it is necessary to treat the two directions separately and obtain two deformations, which give the same limit for $g_\e \sim \e^4$. Nevertheless, the more general case would exceed the bounds of this paper.

\begin{proposition}\label{P1} For every deformation $v$ in $H^1(\O^*_{\e})^3$ there exists a deformation $\wt{v}$ in $H^1(\O_{\e})^3$ satisfying
		\begin{equation}\label{Extension}
		\begin{aligned}
		&\wt{v}_{|\O^*_{\e}}=v,\\
		&\big\|\hbox{dist}\big(\wt{v}, SO(3)\big)\big\|_{L^2(\O_{\e})}\leq C\big\|\hbox{dist}(v, SO(3))\big\|_{L^2(\O^*_{\e})}.
		\end{aligned}
		\end{equation}
		The constant does not depend on $\e$.
	\end{proposition}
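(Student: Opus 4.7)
The plan is to apply Lemma \ref{L1} cell-by-cell on the periodic textile and paste the local extensions together with a partition of unity, using the rigidity estimates from the proof of Lemma \ref{L1} to absorb the jumps between neighbouring cells.

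Fix an open set $Y'\supset\overline Y$ slightly larger than the periodicity cell so that the translates $Q_{(p,q)}:=\e\bigl((p,q,0)+Y'\bigr)$ cover $\Omega_\e$ with bounded multiplicity. For each $(p,q)$ the set $\Omega^*_\e\cap Q_{(p,q)}$ is the $\e$-dilation of a fixed bounded Lipschitz domain (a slightly enlarged portion of $Y^*$), so by step (ii) of the proof of Lemma \ref{L1} there exist an extension $\wt v_{(p,q)}\in H^1(Q_{(p,q)})^3$ of $v\bigr|_{\Omega^*_\e\cap Q_{(p,q)}}$, together with a rotation $\GR_{(p,q)}\in SO(3)$ and a vector $\Ga_{(p,q)}\in\R^3$, satisfying
\begin{align*}
&\|\wt v_{(p,q)}-\Ga_{(p,q)}-\GR_{(p,q)}x\|_{L^2(Q_{(p,q)})}\leq C\e\,\|\hbox{dist}(\nabla v,SO(3))\|_{L^2(\Omega^*_\e\cap Q_{(p,q)})},\\
&\|\nabla\wt v_{(p,q)}-\GR_{(p,q)}\|_{L^2(Q_{(p,q)})}\leq C\,\|\hbox{dist}(\nabla v,SO(3))\|_{L^2(\Omega^*_\e\cap Q_{(p,q)})},
\end{align*}
with constants independent of $\e$ and $(p,q)$.

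Next, pick a partition of unity $\{\theta_{(p,q)}\}$ subordinate to $\{Q_{(p,q)}\}$ with $\|\nabla\theta_{(p,q)}\|_\infty\leq C/\e$, and set $\wt v:=\sum_{(p,q)}\theta_{(p,q)}\wt v_{(p,q)}$ on $\Omega_\e$. Because the beams are glued at every crossing, each $\wt v_{(p,q)}$ coincides with $v$ on $\Omega^*_\e\cap Q_{(p,q)}$, hence $\wt v=v$ on $\Omega^*_\e$ and $\wt v\in H^1(\Omega_\e)^3$. Using $\sum_{(p,q)}\nabla\theta_{(p,q)}=0$, at each point the gradient decomposes as
$$\nabla\wt v=\sum_{(p,q)}\theta_{(p,q)}\nabla\wt v_{(p,q)}+\sum_{(p,q)}\nabla\theta_{(p,q)}\otimes\bigl(\wt v_{(p,q)}-\Ga_{(p_0,q_0)}-\GR_{(p_0,q_0)}x\bigr),$$
where $(p_0,q_0)$ is any fixed index with $\theta_{(p_0,q_0)}>0$ at that point. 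In the second sum, the part $\wt v_{(p,q)}-\Ga_{(p,q)}-\GR_{(p,q)}x$ is absorbed by the first rigidity inequality (the $\e$-factor cancels the $1/\e$ from $\nabla\theta$), while the residual $(\Ga_{(p,q)}-\Ga_{(p_0,q_0)})+(\GR_{(p,q)}-\GR_{(p_0,q_0)})x$ is controlled on the overlap $Q_{(p,q)}\cap Q_{(p_0,q_0)}$ by comparing both rigid motions to $v$ on a common beam segment they contain, which holds a ball of radius $\sim\kappa\e$, so the chaining of step (i) applies uniformly in $\e$. Summing with bounded multiplicity yields $\|\hbox{dist}(\nabla\wt v,SO(3))\|_{L^2(\Omega_\e)}\leq C\|\hbox{dist}(\nabla v,SO(3))\|_{L^2(\Omega^*_\e)}$.

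The main obstacle is exactly this rigid-motion comparison: the uniformity in $\e$ relies crucially on the beams being glued at every crossing, because only through the connected beam backbone do overlapping cells meet in a region containing a ball of uniform relative radius that can feed step (i). Everything else (the cell-wise application of Lemma \ref{L1}, the partition-of-unity algebra, and the final summation with bounded multiplicity) is then routine.
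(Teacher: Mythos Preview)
Your partition-of-unity strategy is a natural one, but the very first step has a real problem: the set $\Omega^*_\e\cap Q_{(p,q)}$ is \emph{not} a Lipschitz domain, so you cannot invoke Lemma~\ref{L1} on it. Look locally at a vertical edge of a contact square, say the corner $(p\e+\kappa\e,\,q\e+\kappa\e,\,0)$. In suitable local coordinates the glued textile there is
\[
D=\{y_2<0,\;y_3>0\}\;\cup\;\{y_1<0,\;y_3<0\}\;\cup\;\{y_1<0,\;y_2<0,\;y_3=0\},
\]
the top beam emerging in the $y_1$-direction above the interface and the bottom beam emerging in the $y_2$-direction below it. The boundary faces meeting at the origin carry outward normals $(1,0,0)$, $(0,1,0)$, $(0,0,-1)$ and $(0,0,1)$; since two of these are opposite, no direction $\nu$ makes $\partial D$ a Lipschitz graph. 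Both the rigidity estimate (step~(ii)) and the extension operator (step~(iii)) in the proof of Lemma~\ref{L1} require a Lipschitz boundary, so your cell-wise application breaks down exactly at the crossings.

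This is precisely what the paper's two-stage construction is designed to avoid. In the first stage one takes \emph{single-beam} portions: one curved beam segment together with the full-height contact cylinders at its two ends. Because only one tube sticks out of each contact cylinder, these portions are genuinely Lipschitz, and Lemma~\ref{L1} extends them to small parallelotopes. The parallelotopes overlap only in contact cylinders, which lie entirely inside the original textile, so the extensions agree on overlaps. In the second stage four such parallelotopes are assembled into a square frame with a central hole (again Lipschitz), and Lemma~\ref{L1} fills the hole. The global extension is then obtained by overlaying these filled cells, whose overlaps are exactly the first-stage parallelotopes where the values already coincide; no partition of unity or explicit rigid-motion comparison is needed. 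Your gluing idea could be grafted onto this once the first stage has produced Lipschitz pieces, but as written the direct one-shot application of Lemma~\ref{L1} to the whole textile cell does not go through.
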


	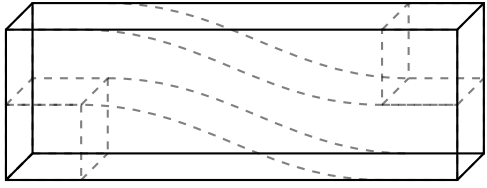
\begin{figure}[htb]\centering
		\begin{tikzpicture}[remember picture,
		x={(1cm,0cm)},
		y={(0cm,1cm)},
		z={({0.5*cos(45)},{0.5*sin(45)})}]
		
		\begin{scope}[opacity=0.5]
		\draw[thick,dashed] (0,0,0)--(1,0,0)--(1,1,0)--(0,1,0)--cycle;
		\draw[thick,dashed] (5,1,0)--(6,1,0)--(6,2,0)--(5,2,0)--cycle;
		\draw[thick,dashed] (0,1,0)--(1,1,0) to[out=0,in=180] (5,0,0) -- (6,0,0);
		\draw[thick,dashed] (0,2,0)--(1,2,0) to[out=0,in=180] (5,1,0) -- (6,1,0);
		\draw[thick,dashed] (0,1,0)--(0,2,0);
		\draw[thick,dashed] (6,0,0)--(6,1,0);
		\end{scope}
		\begin{scope}[opacity=0.5]
		\draw[thick,dashed] (0,0,1)--(1,0,1)--(1,1,1)--(0,1,1)--cycle;
		\draw[thick,dashed] (5,1,1)--(6,1,1)--(6,2,1)--(5,2,1)--cycle;
		\draw[thick,dashed] (1,1,1) to[out=0,in=180] (5,0,1) ;
		\draw[thick,dashed] (1,2,1) to[out=0,in=180] (5,1,1) ;
		\draw[thick,dashed] (0,1,1)--(0,2,1);
		\draw[thick,dashed] (6,0,1)--(6,1,1);	
		\end{scope}
		\begin{scope}[opacity=0.5]
		\draw[thick,dashed] (0,0,0)--(0,0,1);
		\draw[thick,dashed] (1,0,0)--(1,0,1);
		\draw[thick,dashed] (1,1,0)--(1,1,1);
		\draw[thick,dashed] (0,1,0)--(0,1,1);
		
		\draw[thick,dashed] (5,1,0)--(5,1,1);
		\draw[thick,dashed] (6,1,0)--(6,1,1);
		\draw[thick,dashed] (6,2,0)--(6,2,1);
		\draw[thick,dashed] (5,2,0)--(5,2,1);
		
		\draw[thick,dashed] (0,2,0)--(0,2,1);
		\draw[thick,dashed] (6,0,0)--(6,0,1);
		\draw[thick,dashed] (0,0,0)--(0,0,1);
		\draw[thick,dashed] (0,0,0)--(0,0,1);
		\end{scope}
		\begin{scope}
		\draw[thick] (0,0,0) rectangle (6,2,0);
		\draw[thick] (0,0,1) rectangle (6,2,1);
		\draw[thick] (0,0,0)--(0,0,1);
		\draw[thick] (6,0,0)--(6,0,1);
		
		\draw[thick] (0,2,0)--(0,2,1);
		\draw[thick] (6,2,0)--(6,2,1);
		\end{scope}
		
		\end{tikzpicture}
		\caption{First extension domain}\label{fig:extension1}
	\end{figure}
	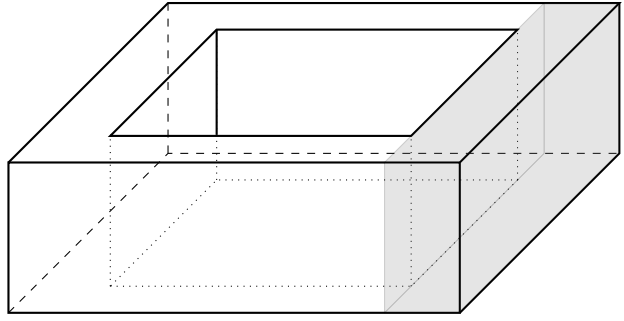
\begin{figure}[htb]\centering
		\begin{tikzpicture}[remember picture,
		x={(1cm,0cm)},
		y={(0cm,1cm)},
		z={({0.5*cos(45)},{0.5*sin(45)})}]
		
		\begin{scope}
		\draw[thick] (0,0,0)--(6,0,0)--(6,2,0)--(0,2,0)--cycle;
		\draw[thick] (0,2,0)--(0,2,6)--(6,2,6)--(6,2,0);
		\draw[thick] (6,0,0)--(6,0,6)--(6,2,6);
		\draw[thick] (1,2,1)--(5,2,1)--(5,2,5)--(1,2,5)--cycle;
		\end{scope}
		
		\begin{scope}
		\draw[thin,dashed] (0,0,0)--(0,0,6)--(6,0,6);
		\draw[thin,dashed] (0,0,6)--(0,2,6);
		\end{scope}
		
		\begin{scope}
		\draw[thin,dotted] (1,0,1)--(5,0,1)--(5,0,5)--(1,0,5)--cycle;
		\draw[thin,dotted] (1,0,1)--(1,2,1);
		\draw[thin,dotted] (5,0,1)--(5,2,1);
		\draw[thin,dotted] (5,0,5)--(5,2,5);
		\draw[thin,dotted] (1,0,5)--(1,2,5);
		\draw[thick] (1,0.6,5)--(1,2,5);
		\end{scope}
		
		\begin{scope}
		\draw[fill=gray,opacity=0.2] (6,0,0)--(6,0,6)--(6,2,6)--(6,2,0)--cycle;
		\draw[fill=gray,opacity=0.2] (5,0,0)--(6,0,0)--(6,2,0)--(5,2,0)--cycle;
		\draw[fill=gray,opacity=0.2] (5,2,0)--(6,2,0)--(6,2,6)--(5,2,6)--cycle;
		\draw[opacity=0.2] (5,0,0)--(5,0,6)--(5,2,6);
		\end{scope}
		
		\end{tikzpicture}
		\caption{Periodicity cell of the periodic plate with holes.}\label{fig:extension2}
	\end{figure}

\begin{proof} Now that the general extension for Lipschitz domains in nonlinear elasticity is recalled in the above lemma, we specify the extension procedure for the the domain $\O^*_{\e}$.
		
		First, we divide the domain $\O^*_{\e}$ into portions included in domains isometric to the parallelotope $(0, \e+2\kappa\e)\times(0,2\kappa\e)\times(0,4\kappa\e)\footnotemark$\footnotetext{We reduce the parallelotopes that are in contact with the boundary of $\o$.} as depicted in Figure \ref{fig:extension1}. These portions include a curved beam  and parts of the beams in the perpendicular direction with which the beam is in contact. Besides, after a rotation and/or a reflection,  all the portions are of the same form and itself Lipschitz-domains. Furthermore, note that these portions intersect each other and every contact cylinder $C_{pq}\times(-2\kappa\e,2\kappa\e)$ with $C_{pq} = (p\e-\kappa\e,p\e+\kappa\e)\times(q\e-\kappa\e,q\e+\kappa\e)$ is used in four of such domains.
		
		Since every portion is a Lipschitz domain the  extension procedure given in Lemma \ref{L1} is applicable for every $v\in H^1(\O^*_{\e})$ and yields an extension to the parallelotope (e.g. $(p\e-\kappa\e,(p+1)\e+\kappa\e)\times(q\e-\kappa\e,q\e+\kappa\e)\times(-2\kappa\e,2\kappa\e)$).
		As second step, we define new domains included in $(p\e-\kappa\e,(p+1)\e+\kappa\e)\times(q\e-\kappa\e,(q+1)\e+\kappa\e)\times(-2\kappa\e,2\kappa\e)$ by collecting four of the above portions as depicted in Figure \ref*{fig:extension2}. Note that the contact cylinders in every corner of the new domain is used by two portions. Obviously this domain is again a Lipschitz domain and hence we extend all the fields into the holes using again Lemma \ref{L1}. 
		
		To obtain the full extension we reassemble the structure. To do this, note that the domains $(p\e-\kappa\e,(p+1)\e+\kappa\e)\times(q\e-\kappa\e,(q+1)\e+\kappa\e)\times(-2\kappa\e,2\kappa\e)$ have an overlap. This overlap includes every beam twice and the contact cylinders again fourfold, i.e. the overlap consists exactly of the domains before the last extension. Together with the interportions from the step before we obtain that the contact cylinders are the most used domains for the extension, namely eight times. This influences the estimate and finally give the final extension $\wt{v}$ which satisfies
		\begin{align*}
		&\big\|\hbox{dist}\big(\wt{v}, SO(3)\big)\big\|_{L^2(\O_{\e})}\leq  C  \big\|\hbox{dist}(v, SO(3))\big\|_{L^2(\O^*_{\e})}
		\end{align*}
where the constant does not depend on $\e$. By construction, we have $\wt{v}_{|\O^*_{\e}}=v$.	
\end{proof}		
	Henceforth, we use the extended deformation $v\in H^1(\O_\e)$, which is a deformation of a periodic plate without holes. This allows to use the results in the papers \cite{Shell1} and general results of \cite{GUnfold}, \cite{KT}, \cite{Shell2}.\\[1mm]
The structure is clamped on its lateral boundary. Moreover, in contrast to \cite{KT} here we assume a glued contact, which corresponds to the case $g_\e \equiv 0$ in \cite{KT}. This allows to obtain one deformation field for the whole structure $\O_\e^*$ instead of one for each beam as in $\cite{KT}$.\\[1mm]
Denote
$$\gamma=\partial\o\cap  \{x_2=0\}=(0,L)\times \{0\},\qquad \Gamma_\e=\gamma\times (-2\kappa\e,2\kappa_\e).$$
The set of the admissible deformations are
\begin{equation}
\begin{aligned}\label{defo-space}
&\GV_{\e}\doteq\Big\{v\in H^1(\O^*_{\e})^3\;\;|\; \; \hbox{such that} \;\; v=I_d\quad \hbox{a.e. on } \partial \O^*_{\e}\cap  \Gamma_\e\Big\},\\
&\GD_{\e}\doteq\Big\{v\in H^1(\O_{\e})^3\;\;|\; \; \hbox{such that} \;\; v=I_d\quad \hbox{a.e. on } \Gamma_\e\Big\}.
\end{aligned}
\end{equation}

\begin{remark}\label{Rem1} Every deformation belonging to $\GV_\e$ is extended in $(0,L)\times (-\kappa\e,0)\times (-2\kappa\e,2\kappa_\e)$ by setting  $v=I_d$ in this open set. Then, Proposition \ref{P1} gives an extension of $v$ whose restriction to $\O_\e$ belongs to $\GD_\e$ and satisfies \eqref{Extension}.
\end{remark}

\section{ The non-linear elasticity problem}\label{sec:problem}
Set
$$\Yc':=(0,2)^2,\qquad \Yc:=(0,2)^2\times (-2\kappa, 2\kappa ).$$
Let  $\Yc^*\subset  \Yc$ be the reference cell of the beam structure. The cell $\Yc^*$ is deduced from $Y^*$ (see Figure \ref{fig:ystar}) after two symmetries with respect to the planes $y_1=1$ and $y_2=1$.\\[1mm]
Denote $\wh W$ the local elastic energy density, then the total elastic energy is
\begin{equation}\label{stpr}
\begin{aligned}
{\cal J}_{\e}(v)=\int_{\O^*_{\e}} \wh W_\e\big(\cdot,\nabla v\big)\, dx- \int_{\O^*_{\e}} f_{\e}\cdot(v-I_d)\, dx,\qquad
\forall v\in \GV_\e,
\end{aligned}
\end{equation}
where $I_d$ is the identity map.
The local density energy $\wh W\, :\ \,  \Yc^*\times \GS_3\longrightarrow \R^+\cup\{+\infty\}$ is given by
$$
\widehat{W}_\e(\cdot , F)=\left\{
\begin{aligned}
&Q\Big(\frac{\cdot}{\e} ,{1\over 2}(F^TF-\GI_3)\Big) &&\hbox{if} \quad \det(F)>0,\\
&+\infty  &&\hbox{if} \quad \det(F)\leq 0,
\end{aligned}\right.
$$ where $\GS_3$ is the space of $3\times 3$ symmetric matrices. The quadratic form $Q$ is defined by
$$Q(y,S)=a_{ijkl}(y)S_{ij}S_{kl} \quad \hbox{for a.e. $y\in \Yc^*$ and for all $S\in \GS_3$},$$ 
where the $a_{ijkl}$'s belong to $L^\infty(\Yc^*)$ and are periodic with respect to $\Ge_1$ and $\Ge_2$. \\ Moreover, the tensor $a$ is symmetric, i.e., $a_{ijkl}= a_{jikl} = a_{klji}$. Also it is positive definite and satisfies
\begin{equation}\label{QPD}
\exists c_0>0,\;\hbox{such that}\; \quad c_0\,S_{ij}S_{ij}\leq a_{ijkl}(y)S_{ij}S_{kl}\quad \hbox{for a.e. $y\in \Yc^*$ and for all $S\in \GS_3$}.
\end{equation}
Note, that the energy density 
$$\widehat{W}_\e(x, \nabla v(x)) = 
\left\{\begin{aligned}
&Q\Big(\frac{x}{\e},E(v)(x)\Big) &&\hbox{if} \quad \det(\nabla v(x))>0,\\
&+\infty  &&\hbox{if} \quad \det(\nabla v(x))\le0,
\end{aligned}\right.\qquad \hbox{for a.e. }x\in \O^*_\e
$$
depends on the strain tensor $\ds E(v)={1\over 2}\big((\nabla v)^T\nabla v-\GI_3\big)$ with $\GI_3$ the unit $3\times 3$ matrix.

\begin{rem}
	As a classical example of a local elastic energy satisfying the above assumptions, we mention the following St Venant-Kirchhoff's law  for which
	$$
	\widehat{W}(F)=\left\{
	\begin{aligned}
	&{\lambda\over 8}\big(tr(F^TF-\GI_3) \big)^2+{\mu\over 4}tr\big((F^TF-\GI_3)^2\big)\qquad&&\hbox{if}\quad \det(F)>0\\
	&+\infty &&\hbox{if}\quad\det(F)\le 0.
	\end{aligned}
	\right.
	$$ 
\end{rem}

Now we are in the position to state the problem. \\[1mm]
Therefore, set
\begin{equation*}
m_{\e}=\inf_{v\in \GV_{\e}} J_{\e}(v)\footnotemark.
\end{equation*} \footnotetext{It is well known that the existence of a minimizer for ${\cal J}_{\e}$ is still an open problem.}

\section{Preliminary estimates}\label{sec:prel-est}
\subsection{Recalls about the plate deformations}
Denote $x'=(x_1,x_2)\in \R^2$ and
$$\GU_{\e}\doteq\Big\{u\in H^1(\O_{\e})^3\;\;|\; \; \; u=0\;\; \hbox{a.e. on } \Gamma_\e\Big\}.$$
The deformations and the terms of their decompositions are estimated in terms of $\|\hbox{dist}(\nabla v, SO(3))\|_{L^2(\O^*_{\e})}$, this is why the lemma below plays a crucial role
\begin{lemma}\label{lem42}
	Let $v\in \GV_\e$ be a deformation and  $\wt{v}\in \GD_\e$ the extended deformation given by Proposition \ref{P1} and Remark \ref{Rem1}.  The  associated displacement $u = \wt{v} - I_d$ belongs to $\GU_\e$ and  satisfies
	\begin{equation}\label{EQ32}
	\|e(u)\|_{L^2(\O_{\e})}  \leq C_0\|dist(\nabla v, SO(3))\|_{L^2(\O^*_\e)} + \frac{C_1}{\e^{5/2}} \|dist(\nabla v, SO(3))\|^2_{L^2(\O^*_\e)} 
	\end{equation}
	The constants do not depend on $\e$ and $v$ (they depend on $\o$, $Y^*$ and $\kappa$).
\end{lemma}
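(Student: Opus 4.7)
The plan is to work from the algebraic identity
$$
e(u) = E(\wt v) - \tfrac{1}{2}(\nabla u)^T \nabla u,
$$
which holds because $u = \wt v - I_d$ implies $\nabla u = \nabla\wt v - \GI_3$, and then to estimate the two pieces separately before using Proposition~\ref{P1} to transfer $L^2$-norms from $\O_\e$ to $\O^*_\e$.

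For the linear piece, writing $\nabla\wt v = R + G$ with $R$ the pointwise closest rotation and $|G| = \mathrm{dist}(\nabla\wt v, SO(3))$, the identity $R^T R = \GI_3$ yields the elementary pointwise bound
$$
|E(\wt v)| \le \mathrm{dist}(\nabla\wt v, SO(3)) + \tfrac{1}{2}\,\mathrm{dist}(\nabla\wt v, SO(3))^2.
$$
Its $L^2$-contribution over $\O_\e$, combined with Proposition~\ref{P1}, produces the leading term $C_0\,\|\mathrm{dist}(\nabla v, SO(3))\|_{L^2(\O^*_\e)}$ in \eqref{EQ32}.

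For the quadratic piece $\tfrac{1}{2}\|(\nabla u)^T\nabla u\|_{L^2(\O_\e)} \le \tfrac{1}{2}\|\nabla u\|_{L^4(\O_\e)}^2$, I would obtain an $L^4$-estimate for $\nabla u$ in three steps. First, apply a plate rigidity estimate (built by covering $\O_\e$ by cells of size $\e$ to which Lemma~\ref{L1} applies, as in \cite{Shell1}) to extract a single rotation $R_0\in SO(3)$ with $\|\nabla\wt v - R_0\|_{L^2(\O_\e)}$ controlled by $\|\mathrm{dist}(\nabla\wt v, SO(3))\|_{L^2(\O_\e)}$. Second, use the Dirichlet condition $\wt v = I_d$ on $\Gamma_\e$, preserved by the extension thanks to Remark~\ref{Rem1}, to pin down $R_0 = \GI_3$ up to a controllable error, which gives $\|\nabla u\|_{L^2(\O_\e)} \le C\,\|\mathrm{dist}(\nabla v, SO(3))\|_{L^2(\O^*_\e)}$ via Proposition~\ref{P1}. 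Third, upgrade from $L^2$ to $L^4$ via the plate-displacement decomposition of \cite{Shell1}, writing $u = U_m(x') + x_3\,R(x')\wedge\Ge_3 + \bar u(x)$: the ``elementary'' two-dimensional part is controlled in $L^p$ for any $p<\infty$ by 2D Sobolev embedding on $\omega$ with constants that do not degrade in $\e$, while the warping $\bar u$ is handled by a rescaled 3D Sobolev inequality after the change of variable $x_3 = \e y_3$.

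The main obstacle is this last step: a direct 3D Sobolev embedding on $\O_\e$ produces the wrong $\e$-scaling, and one really needs the plate-specific decomposition to exploit the two-dimensional character of the midsurface. The resulting factor $\e^{-5/2}$ is critical, as is apparent from the fact that in the von-K\'arm\'an regime $\|\mathrm{dist}(\nabla v, SO(3))\|_{L^2(\O^*_\e)} \sim \e^{5/2}$, the two terms in \eqref{EQ32} become of the same order $\e^{5/2}$.
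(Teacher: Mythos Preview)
Your overall plan---split $e(u)=E(\wt v)-\tfrac12(\nabla u)^T\nabla u$, control the quadratic piece through an $L^4$ bound obtained from a plate-type rigidity plus a two-dimensional Sobolev embedding, and finish with Proposition~\ref{P1}---is indeed the mechanism behind \cite[Lemma~4.3]{Shell1}, which the paper simply cites. But Step~1--2 of your $L^4$ argument contains a real error.

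You assert that one can extract a \emph{single} rotation $R_0\in SO(3)$ with $\|\nabla\wt v-R_0\|_{L^2(\O_\e)}\le C\,\|\mathrm{dist}(\nabla\wt v,SO(3))\|_{L^2(\O_\e)}$ uniformly in $\e$, and then that the clamping forces $R_0\approx\GI_3$, whence $\|\nabla u\|_{L^2(\O_\e)}\le C\,\|\mathrm{dist}(\nabla v,SO(3))\|_{L^2(\O^*_\e)}$. Both claims fail for thin plates: the FJM rigidity constant on $\O_\e$ blows up as $\e\to0$, and in the very von-K\'arm\'an regime you invoke one has $\|\nabla u\|_{L^2(\O_\e)}\sim\e^{3/2}$ while $\|\mathrm{dist}(\nabla v,SO(3))\|_{L^2(\O^*_\e)}\sim\e^{5/2}$ (cf.\ \eqref{ConvN} and the estimates \eqref{est_final1}), so the inequality you wrote cannot hold with an $\e$-free constant. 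What the cell-by-cell construction of \cite{Shell1} actually produces is a rotation \emph{field} $R(x')$ on the mid-surface with
\[
\|\nabla\wt v-R\|_{L^2(\O_\e)}\le C\,\|\mathrm{dist}\|_{L^2(\O_\e)},\qquad
\|\nabla_{x'}R\|_{L^2(\o)}\le C\,\e^{-3/2}\|\mathrm{dist}\|_{L^2(\O_\e)},
\]
and the boundary condition gives only $\|R-\GI_3\|_{H^1(\o)}\le C\,\e^{-3/2}\|\mathrm{dist}\|_{L^2(\O_\e)}$. The quadratic term is then handled not via $\|\nabla u\|_{L^4}$ but via the pointwise inequality $|\,\mathrm{sym}(R-\GI_3)\,|\le C\,|R-\GI_3|^2$ together with $e(u)=\mathrm{sym}(\nabla\wt v-R)+\mathrm{sym}(R-\GI_3)$: the $H^1(\o)\hookrightarrow L^4(\o)$ embedding and the thickness factor $\e^{1/2}$ yield
\[
\|R-\GI_3\|^2_{L^4(\O_\e)}\le C\,\e^{1/2}\|R-\GI_3\|^2_{H^1(\o)}\le C\,\e^{-5/2}\|\mathrm{dist}\|^2_{L^2(\O_\e)},
\]
which is where the exponent $-5/2$ genuinely originates. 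Your Step~3 heuristics are in the right spirit, but they need to be applied to $R-\GI_3$ (a field on $\o$), not to $\nabla u$ through an intermediate $L^2$ bound that is simply not available.
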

\begin{proof} In \cite[Lemma 4.3]{Shell1} it is proved that there exists a constant which does not depend on $\e$ and $\wt{v}$ such that
$$\|e(u)\|_{L^2(\O_{\e})}  \leq C\|dist(\nabla \wt{v}, SO(3))\|_{L^2(\O_\e)} \Big(1+ \frac{1}{\e^{5/2}} \|dist(\nabla \wt{v}, SO(3))\|_{L^2(\O_\e)}\Big).$$ 
Then, Proposition \ref{P1} gives a constant which does not depend on $\e$ and $v$ such that
$$
\big\|\hbox{dist}\big(\wt{v}, SO(3)\big)\big\|_{L^2(\O_{\e})}\leq C\big\|\hbox{dist}(v, SO(3))\big\|_{L^2(\O^*_{\e})}.
$$
This ends the proof of the lemma.
\end{proof}
	
\subsection{Recalls about the plate displacements}
Set
$$
\begin{aligned}
&H^1_\gamma(\o)\doteq\big\{\phi\in H^1(\o)\;|\; \phi=0\;\; \hbox{a.e. on }\; \gamma\big\},\\
&H^2_\gamma(\o)\doteq\big\{\phi\in H^1(\o)\;|\; \phi=0,\;\;\nabla\phi=0\; \hbox{a.e. on }\; \gamma\big\}.
\end{aligned}
$$
Below we recall a definition from \cite[Chapter 11]{CDG} (see also \cite{GHJ,Gstructureplates}) 
\begin{definition}\label{DEFElem} 
Elementary displacement   are elements $u_e$  of $H^1(\O_{\e})^3$ satisfying   for a.e. $x =(x',x_3)\in  \O_\e$ (where $x'\in \o$)% \vspace {-1mm}
\begin{equation*}
\begin{aligned}
& u_{e,1}(x)={\cal U}_1(x')+x_3{\cal R}_1(x'),\\
& u_{e,2}(x)={\cal U}_2(x')+x_3{\cal R}_2(x'),\\
& u_{e,3}(x)={\cal U}_3(x').  %\vspace {-3mm}
\end{aligned}
\end{equation*}
Here % \vspace {-1mm}
 $${\cal U}= ({\cal U}_{1}, {\cal U}_{2}, {\cal U}_{3}) \in H^1(\o)^3  \quad \text {and }\quad  {\cal R}={\cal R}_1\Ge_1+{\cal R}_2\Ge_2\in H^1(\o)^2.$$
 \end{definition}

The following lemma is proved in \cite[Theorem 11.4 and Proposition 11.6]{CDG}
\begin{lemma}\label{lem41} Let $u$ be in $\GU_{\e}$.  The displacement $u$ can be  decomposed  as the sum 
\begin{equation} \label{eq 13.1bis}
u= u_{e}+\overline u
\end{equation}
 of an elementary displacement $u_{e}$ and a residual displacement  $\overline u$, both belonging to $\GU_\e$ and satisfying
\begin{equation}\label{displacement-est1}
{\cal U}\in H^1_\gamma(\o)^3,\quad {\cal R}\in H^1_\gamma(\o)^2,\quad\|\overline{u}\|_{L^2(\O_\e)}+ \e\|\nabla \overline{u}\|_{L^2(\O_\e)}  \leq  C\e \|e(u)\|_{L^2(\O_\e)}.
\end{equation}
Moreover, one has
\begin{equation}\label{EQ10-11-NEW}
\begin{aligned}
&\;\|{\cal U}_\alpha\|_{H^1(\omega)}+\e\big(\|{\cal U}_3\|_{H^1(\o)}+\|{\cal R}\|_{H^1(\omega)}\big)\leq \frac{C}{\e^{1/2}}\|e(u)\|_{L^2(\O_{\e})},\\
&\big\|\partial_\alpha \Uc_3 + \Rc_\alpha\big\|_{L^2(\o)}\leq  \frac{C}{\e^{1/2}}\|e(u)\|_{L^2(\O_\e)},\\
& \;\|u_\alpha\|_{L^2(\O_{\e} )}+\e\|u_3\|_{L^2(\O_{\e} )}\le  C\|e(u)\|_{L^2(\O_{\e})},\\
&\sum_{\alpha,\beta=1}^2\Big\|{\partial u_\alpha\over \partial x_\beta}\Big\|_{L^2(\O_\e)}+\Big\|{\partial u_3\over \partial x_3}\Big\|_{L^2(\O_\e)}\le  C\|e(u)\|_{L^2(\O_{\e})},\\
&\sum_{\alpha=1}^2\Big(\Big\|{\partial u_\alpha\over \partial x_3}\Big\|_{L^2(\O_\e)}+\Big\|{\partial u_3\over \partial x_\alpha}\Big\|_{L^2(\O_\e)}\Big)\le  {C\over \e}\|e(u)\|_{L^2(\O_{\e})}.
\end{aligned}
\end{equation} 
	The constants do not depend on $\e$. 
\end{lemma}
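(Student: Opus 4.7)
The plan is to construct the decomposition explicitly through transverse $L^2$-moments, propagate the Dirichlet data on $\gamma$ to the reduced fields, and then extract all the stated estimates from one core Korn--Poincaré inequality tailored to the thin slab.

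Set $I_\e=(-2\kappa\e,2\kappa\e)$. I first define
\begin{equation*}
\Uc_i(x')=\frac{1}{|I_\e|}\int_{I_\e} u_i(x',x_3)\,dx_3\quad(i=1,2,3),\qquad \Rc_\alpha(x')=\frac{3}{2(2\kappa\e)^3}\int_{I_\e} x_3\,u_\alpha(x',x_3)\,dx_3\quad(\alpha=1,2),
\end{equation*}
and let $u_e$ be the elementary displacement from Definition~\ref{DEFElem} with $\overline u=u-u_e$. The normalization guarantees $\int_{I_\e}\overline u_i\,dx_3=0$ and $\int_{I_\e}x_3\overline u_\alpha\,dx_3=0$. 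Since $u=0$ on $\Gamma_\e=\gamma\times I_\e$, averaging transfers the trace to $\Uc=0$ and $\Rc=0$ on $\gamma$ and $\overline u=0$ on $\Gamma_\e$, giving the functional setting $\Uc\in H^1_\gamma(\o)^3$, $\Rc\in H^1_\gamma(\o)^2$ and $u_e,\overline u\in\GU_\e$.

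The central step is the sharp bound $\|\overline u\|_{L^2(\O_\e)}+\e\|\nabla\overline u\|_{L^2(\O_\e)}\le C\e\|e(u)\|_{L^2(\O_\e)}$. One--dimensional Poincaré on each fiber $\{x'\}\times I_\e$, combined with the vanishing zeroth and first moments of $\overline u$, yields $\|\overline u(x',\cdot)\|_{L^2(I_\e)}\le C\e\|\partial_3\overline u(x',\cdot)\|_{L^2(I_\e)}$, whence $\|\overline u\|_{L^2(\O_\e)}\le C\e\|\partial_3\overline u\|_{L^2(\O_\e)}$ after integration in $x'$. For the gradient control I would rescale $x_3=\e y_3$ to the fixed slab $\o\times(-2\kappa,2\kappa)$, where 3D Korn (with zero trace on $\gamma\times(-2\kappa,2\kappa)$) produces $\|\nabla\overline u\|_{L^2}\le C\|e(\overline u)\|_{L^2}$ uniformly in $\e$; unscaling and using that $e(\overline u)=e(u)-e(u_e)$ with $\|e(u_e)\|_{L^2(\O_\e)}\le C\|e(u)\|_{L^2(\O_\e)}$ (the averaging operators defining $u_e$ being $L^2$-continuous) closes the estimate \eqref{displacement-est1}.

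The bounds \eqref{EQ10-11-NEW} then follow from the explicit identities
\begin{equation*}
e_{\alpha\beta}(u_e)=\tfrac{1}{2}(\partial_\alpha\Uc_\beta+\partial_\beta\Uc_\alpha)+\tfrac{x_3}{2}(\partial_\alpha\Rc_\beta+\partial_\beta\Rc_\alpha),\qquad 2e_{\alpha 3}(u_e)=\Rc_\alpha+\partial_\alpha\Uc_3,\qquad e_{33}(u_e)=0.
\end{equation*}
The $L^2(I_\e)$-orthogonality of $1$ and $x_3$ produces, after integration in $x_3$, bounds on the planar symmetric gradients of $\Uc$ and $\Rc$ and on $\|\Rc_\alpha+\partial_\alpha\Uc_3\|_{L^2(\o)}$ weighted by $\e^{1/2}$, $\e^{3/2}$, $\e^{1/2}$ respectively; 2D Korn on $\o$ applied to $\Uc_\alpha$ and $\Rc_\alpha$ (with zero trace on $\gamma$) yields the $H^1$-estimates of $\Uc_\alpha$ and $\Rc$, after which $\partial_\alpha\Uc_3=(\Rc_\alpha+\partial_\alpha\Uc_3)-\Rc_\alpha$ and Poincaré on $H^1_\gamma(\o)$ control $\Uc_3$. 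The final three inequalities on $u$ follow by decomposing $u=u_e+\overline u$ and invoking $\partial_3 u_\alpha=\Rc_\alpha+\partial_3\overline u_\alpha$ and $\partial_3 u_3=\partial_3\overline u_3$ together with the bounds already at hand.

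The delicate point is the sharp Korn--Poincaré step for $\overline u$: a naive 3D Korn on a slab of thickness $\e$ would degenerate, and the correct $\e$-balance is obtained only by exploiting the vanishing zeroth and first transverse moments. This is precisely the content of the Griso plate decomposition from \cite[Ch.~11]{CDG} being invoked.
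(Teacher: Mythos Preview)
The paper does not prove this lemma at all; it simply cites \cite[Theorem 11.4 and Proposition 11.6]{CDG}. Your moment-based construction of $\Uc,\Rc$ and the transfer of the trace on $\gamma$ are exactly the ones used there, and your derivation of \eqref{EQ10-11-NEW} from the strain identities of $u_e$ together with 2D Korn is the right mechanism for the secondary estimates.

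There is, however, a genuine gap in your argument for the core bound $\|\nabla\overline u\|_{L^2(\O_\e)}\le C\|e(u)\|_{L^2(\O_\e)}$. The step ``rescale $x_3=\e y_3$ and apply 3D Korn on the fixed slab'' does not deliver $\|\nabla\overline u\|\le C\|e(\overline u)\|$ uniformly in $\e$: under that rescaling the mixed strain becomes $e_{\alpha 3}(\tilde u)=\tfrac12(\partial_\alpha\overline u_3+\e\,\partial_{x_3}\overline u_\alpha)$, which is \emph{not} a component of $e(\overline u)$, so the right-hand side of Korn on the fixed slab already contains individual gradient terms of $\overline u$. Equivalently, the Korn constant on the thin slab with lateral Dirichlet data is of order $1/\e$: test with the Kirchhoff--Love field $w=(-x_3\partial_1\psi,-x_3\partial_2\psi,\psi)$, $\psi\in H^2_\gamma(\o)$, for which $\|e(w)\|_{L^2(\O_\e)}\sim\e^{3/2}\|D^2\psi\|$ while $\|\nabla w\|_{L^2(\O_\e)}\gtrsim\e^{1/2}\|\nabla\psi\|$. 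Your own closing remark identifies the cure --- the vanishing zeroth and first transverse moments of $\overline u$ are what kill exactly these bending modes --- but your rescaling argument never invokes those moments, so as written it cannot close. The fallback bound $\|e(u_e)\|_{L^2}\le C\|e(u)\|_{L^2}$ is fine for the in-plane components $e_{\alpha\beta}(u_e)$ (these are $L^2$-projections of $e_{\alpha\beta}(u)$ onto $\mathrm{span}\{1,x_3\}$), but for $e_{\alpha 3}(u_e)=\tfrac12(\Rc_\alpha+\partial_\alpha\Uc_3)$ it is essentially equivalent to \eqref{EQ10-11-NEW}$_2$, which you derive only afterwards, so the logic is circular.

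In short: your outline matches the structure of the argument in \cite{CDG}, and your final paragraph correctly flags where the real work lies, but the concrete mechanism you propose for that step is not the one that succeeds. Since the paper itself defers entirely to \cite{CDG} for this lemma, simply citing that reference --- as you effectively do in your last sentence --- is consistent with the paper's own treatment.
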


\subsection{Assumptions on the forces}\label{SS53}

The forces have to admit a certain scaling with respect to the $\e$-scaling of the domain. For the textile we require forces of the type\\
\begin{align}\label{force}
\begin{split}
f_{\e,1} &= \e^2 f_1,\\
f_{\e,2} &= \e^2 f_2,\\
f_{\e,3} &= \e^3 f_3,
\end{split}
\qquad  \;\hbox{a.e. in }\; \O^*_{\e},
\end{align}
with $f\in L^2(\o)^3$. In order to obtain at the limit a von-K\'{a}rm\'{a}n model, the applied forces must satisfy a condition
\begin{equation}\label{C2}
\|f\|_{L^2(\o)}\leq C^*.
\end{equation} 
This constant depends on the reference cell $\Yc^*$, the mid-surface $\o$ of the structure and the local elastic energy $W$ (see Lemma \ref{lem43}).

The scaling of the force gives rise to the order of the energy in the elasticity problem. We prove this,  in lemma below.

\begin{lemma}\label{lem43}
	Let $v\in \GV_{\e}$ be a deformation such that $J_{\e}(v) \leq 0$. Assume \eqref{force} on the forces. There exists a constant $C^*$ independent of $\e$ and the applied forces such that, if $\|f\|_{L^2(\o)} < C^*$ one has
	\begin{align*}
	\|dist(\nabla v, SO(3))\|_{L^2(\O^*_\e)} \leq C \e^{5/2} \|f\|_{L^2(\O)}.
	\end{align*}
	The constant $C$ does not depend on $\e$.
\end{lemma}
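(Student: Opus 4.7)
The plan is to extract from ${\cal J}_\e(v)\leq 0$ a bound on $\|E(v)\|_{L^2(\O^*_\e)}$ via the coercivity of $Q$, convert it into a bound on $\|\hbox{dist}(\nabla v,SO(3))\|_{L^2(\O^*_\e)}$ by a pointwise polar-decomposition comparison, estimate the work of the forces using the scalings \eqref{force} and the Korn-type inequalities of Lemma~\ref{lem41}, and finally absorb a quadratic remainder produced by the nonlinear term in Lemma~\ref{lem42}.

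\textbf{Step 1 (energy and coercivity).} Since $\widehat W_\e=+\infty$ where $\det(\nabla v)\leq 0$, the hypothesis ${\cal J}_\e(v)\leq 0$ forces $\det(\nabla v)>0$ a.e.\ on $\O^*_\e$, and
\[
\int_{\O^*_\e}Q\big(x/\e,E(v)\big)\,dx\leq \int_{\O^*_\e}f_\e\cdot(v-I_d)\,dx.
\]
Coercivity \eqref{QPD} gives $c_0\|E(v)\|^2_{L^2(\O^*_\e)}\leq \int_{\O^*_\e}f_\e\cdot(v-I_d)\,dx$. Next, I use the pointwise polar decomposition $\nabla v=RU$ with $R\in SO(3)$, $U$ symmetric positive definite: then $\hbox{dist}(\nabla v,SO(3))^2=|U-\GI_3|^2$ while $4|E(v)|^2=|U^2-\GI_3|^2=\sum_i(\lambda_i-1)^2(\lambda_i+1)^2\geq \sum_i(\lambda_i-1)^2$ because $\lambda_i>0$. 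Hence $\hbox{dist}(\nabla v,SO(3))\leq 2|E(v)|$ pointwise, so
\[
\frac{c_0}{4}\big\|\hbox{dist}(\nabla v,SO(3))\big\|^2_{L^2(\O^*_\e)}\leq \int_{\O^*_\e}f_\e\cdot(v-I_d)\,dx.
\]

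\textbf{Step 2 (work of the forces).} Let $\widetilde v\in\GD_\e$ be the extension from Proposition~\ref{P1} (cf.\ Remark~\ref{Rem1}) and $\widetilde u=\widetilde v-I_d\in\GU_\e$. Since $\widetilde u=v-I_d$ on $\O^*_\e\subset \O_\e$, I bound the force work componentwise using \eqref{force}, the identity $\|f_i\|_{L^2(\O_\e)}\leq C\e^{1/2}\|f_i\|_{L^2(\o)}$ (since $f$ depends only on $x'$), and the plate estimates $\|\widetilde u_\alpha\|_{L^2(\O_\e)}\leq C\|e(\widetilde u)\|_{L^2(\O_\e)}$, $\|\widetilde u_3\|_{L^2(\O_\e)}\leq C\e^{-1}\|e(\widetilde u)\|_{L^2(\O_\e)}$ from Lemma~\ref{lem41}. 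The in-plane terms scale as $\e^2\cdot\e^{1/2}\cdot 1=\e^{5/2}$ and the transverse one as $\e^3\cdot\e^{1/2}\cdot\e^{-1}=\e^{5/2}$, yielding
\[
\Big|\int_{\O^*_\e}f_\e\cdot(v-I_d)\,dx\Big|\leq C\,\e^{5/2}\,\|f\|_{L^2(\o)}\,\|e(\widetilde u)\|_{L^2(\O_\e)}.
\]

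\textbf{Step 3 (quadratic absorption).} Set $D:=\|\hbox{dist}(\nabla v,SO(3))\|_{L^2(\O^*_\e)}$. Combining Steps~1--2 with Lemma~\ref{lem42}, which gives $\|e(\widetilde u)\|_{L^2(\O_\e)}\leq C_0 D+C_1\e^{-5/2}D^2$, I obtain
\[
D^2\leq A\,\|f\|_{L^2(\o)}\big(\e^{5/2}\,D+D^2\big),
\]
where $A$ depends only on $\o$, $\Yc^*$, $\kappa$ and $a_{ijkl}$, hence is $\e$-independent. Choosing $C^*$ so that $A\,C^*\leq 1/2$, the quadratic term on the right is absorbed into $D^2$ on the left (either $D=0$ or one divides by $D$), producing $D\leq 2A\,\e^{5/2}\|f\|_{L^2(\o)}$, which is the desired estimate. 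The delicate point is this final absorption: it relies on the $\e$-independence of every constant involved --- the rigidity constants of Proposition~\ref{P1}, the constants $C_0,C_1$ of Lemma~\ref{lem42}, the plate Korn constants of Lemma~\ref{lem41} and the coercivity $c_0$ --- which is precisely what the preceding sections have been designed to guarantee.
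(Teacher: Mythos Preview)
Your proof is correct and follows essentially the same route as the paper: coercivity to control $\|\hbox{dist}(\nabla v,SO(3))\|_{L^2(\O^*_\e)}$ by the force work, the scaling of the forces combined with the plate Korn estimates of Lemma~\ref{lem41} to bound the force work by $C\e^{5/2}\|f\|_{L^2(\o)}\|e(\widetilde u)\|_{L^2(\O_\e)}$, and then absorption of the quadratic remainder coming from Lemma~\ref{lem42} under a smallness condition on $\|f\|_{L^2(\o)}$. The only notable difference is that you make explicit, via polar decomposition, the pointwise inequality $\hbox{dist}(\nabla v,SO(3))\leq 2|E(v)|$ needed to pass from the coercivity bound on $\|E(v)\|_{L^2}$ to one on $\|\hbox{dist}(\nabla v,SO(3))\|_{L^2}$; the paper writes this step as a direct consequence of \eqref{QPD} without spelling it out.
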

\begin{proof} Using \eqref{QPD}  gives rise to the estimation
	\begin{equation}\label{EQ411}
		c_0 \|dist(\nabla v, SO(3))\|^2_{L^2(\O^*_{\e})} \leq \Big|\int_{\O^*_\e} f_\e \cdot (v - I_d)\; dx\Big|.
	\end{equation}
Introduce $u=\wt{v}-I_d\in \GU_\e$ the associated displacement to the extended deformation (see Lemma \ref{lem42}). Then, with \eqref{force} and the estimates \eqref{EQ10-11-NEW}$_3$  we obtain
\begin{equation}\label{EQ47}
	\begin{aligned}
		\Big|\int_{\O^*_\e} f_\e \cdot (v - I_d)\; dx\Big| &\leq \e^{5/2} \|f_\alpha \|_{L^2(\o)} \|u_\alpha\|_{L^2(\O^*_\e)} + \e^{7/2} \|f_3 \|_{L^2(\o)} \|u_3\|_{L^2(\O^*_\e)}\\
		&\leq \e^{5/2} \|f_\alpha \|_{L^2(\o)} \|u_\alpha\|_{L^2(\O_\e)} + \e^{7/2} \|f_3 \|_{L^2(\o)} \|u_3\|_{L^2(\O_\e)}\\
		&\leq C_2 \e^{5/2} \|f \|_{L^2(\o)} \|e(u)\|_{L^2(\O_\e)}.
	\end{aligned}
	\end{equation}
	Eventually, the above inequality with \eqref{EQ411} and Lemma \ref{lem42} give
\begin{multline*}
c_0\|dist(\nabla v, SO(3))\|^2_{L^2(\O^*_{\e})} \leq C_2C_0\e^{5/2} \|f\|_{L^2(\O)} \|dist(\nabla v, SO(3))\|_{L^2(\O^*_{\e})}\\ + C_2C_1\|f\|_{L^2(\O)} \|dist(\nabla v, SO(3))\|^2_{L^2(\O^*_{\e})}.
\end{multline*}
If $C_2C_1\|f\|_{L^2(\o)} < c_0$, then 
		\begin{align*}
		\|dist(\nabla v, SO(3))\|_{L^2(\O^*_\e)} \leq  \frac{C_2 \e^{5/2}}{c_0- C_2C_1\|f\|_{L^2(\O)} }\|f\|_{L^2(\O)}.
	\end{align*}
	\vskip-7mm
\end{proof}

Now, if the deformation $v\in\GV_\e$ satisfies ${\cal J}_{\e}(v) \leq 0$, one can give a lower bound of  the infimum of the functional ${\cal J}_\e$. To do this, use the assumptions \eqref{QPD} on the problem and \eqref{force}-\eqref{C2} on the forces together with the Lemmas \ref{lem42}-\ref{lem43} and inequality \eqref{EQ47} lead to 
	\begin{align}\label{vvtest}
	c_0\|(\nabla v)^T\nabla v - \GI_3\|^2_{L^2(\O^*_\e)} \leq \int_{\O_\e^*} \wh{W}(\nabla v) dx \leq \int_{\O^*_\e} f_\e \cdot (v - I_d)\; dx \leq C \e^5\|f\|^2_{L^2(\o)}.
	\end{align}
As a consequence, there exists   a constant $c$ independent of $\e$ such that 	
	\begin{align}\label{eq51}
	-c\e^5 \leq {\cal J}_{\e}(v) \leq 0
	\end{align}
	Recalling that $m_{\e}=\inf_{v\in \GV_{\e}} J_{\e}(v)$ yields
	\begin{align}\label{eq52}
	-c\leq {m_{\e}\over \e^5} \leq 0.
	\end{align}
	Our aim is to give the asymptotic behavior  of the rescaled sequence $\ds \Big\{{m_{\e}\over \e^5}\Big\}_\e$ and to characterize its limit as the minimum of a functional.

\section {Asymptotic behavior}
In this section, we consider a sequence $\{v_\e\}_\e$ of deformations satisfying 
\begin{align}\label{assump-dist-5}
	 \ds \|dist(\nabla  v_\e,SO(3))\|_{L^2(\O^*_{\e})} \le C \e^{5/2}.
\end{align} Below we are interested by the asymptotic behavior of  the sequence of displacements $\{u_\e\}_\e=\{v_\e-I_d\}_\e$. Here, for every $\e$, $u_\e$ is the associated displacement to the extended deformation $v_\e$. 

From Lemma \ref{lem42}, one has
\begin{equation}\label{EQ53}
\|e(u_\e)\|_{L^2(\O_{\e})}  \leq C \e^{5/2}.
\end{equation}
Below, we	recall some estimates of Lemma \ref{lem41} under this assumption. One has
		\begin{equation}\label{est_final1}
		\begin{aligned}
		&\|\overline{u}_\e\|_{L^2(\O_\e)} + \e\|\nabla \overline{u}_\e\|_{L^2(\O_\e)} \leq C \e^{7/2},\\
		&\big\|{\cal U}_{\e,3}\big\|_{H^1(\o)}+\big\|{\cal R}_\e\big\|_{H^1(\o)}\le C\e ,\qquad\big\|{\cal U}_{\e,\alpha}\big\|_{H^1(\o)}\le C\e^2,\\ 
		&\big\|\partial_\alpha \Uc_{\e,3} + \Rc_{\e,\alpha}\big\|_{L^2(\o)}\le C \e^2.
		\end{aligned}
		\end{equation}
The constants do not depend on $\e$.

\begin{lemma}[See {\cite[Section 7]{Shell1}}]\label{lemconv1}
Under the assumptions of Lemma \ref{lem41}, there exist a subsequence of $\{\e\}$, still denoted $\{\e\}$,   $\Uc_3 \in H^2_\gamma(\o)$ and $\Uc_\alpha,\; \Rc_\alpha \in H^1_\gamma(\o)$ ($\alpha\in\{1,2\}$) such that
	\begin{align}\label{eq48}
	\begin{split}
	&\frac{1}{\e}\Uc_{\e,3} \rightarrow \Uc_3\quad \text{strongly in } H^1_\gamma(\o),\\
	&\frac{1}{\e}\Rc_{\e,\alpha} \rightharpoonup \Rc_\alpha\quad \text{weakly in } H^1_\gamma(\o),\;\; \hbox{and strongly in } L^4(\o)\\
	&\frac{1}{\e^2}\Uc_{\e,\alpha} \rightharpoonup \Uc_\alpha \quad \text{weakly in } H^1_\gamma(\o),\\
	&\frac{1}{\e^2} \big(\partial_\alpha \Uc_{\e,3} + \Rc_{\e,\alpha}\big) \rightharpoonup \Zc_\alpha \quad \text{weakly in } L^2(\o).
	\end{split}
	\end{align}
	Moreover, one has
	\begin{align}\label{eq46}
	\partial_\alpha \Uc_3 +\Rc_\alpha=0.
	\end{align}
\end{lemma}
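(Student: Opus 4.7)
\medskip\noindent
\textbf{Proof plan for Lemma \ref{lemconv1}.} The plan is to extract weakly convergent subsequences from the a priori bounds in \eqref{est_final1}, identify the relation \eqref{eq46} as a consequence of the fact that the Nirenberg-type quantity $\partial_\alpha\Uc_{\e,3}+\Rc_{\e,\alpha}$ is actually of order $\e^2$, and then upgrade the weak convergence of $\frac{1}{\e}\Uc_{\e,3}$ to strong $H^1$ convergence by rewriting its gradient in terms of quantities that converge strongly.

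\emph{Step 1: extraction.} The estimates \eqref{est_final1} show that the sequences $\frac{1}{\e}\Uc_{\e,3}$, $\frac{1}{\e}\Rc_{\e,\alpha}$ are bounded in $H^1_\gamma(\o)$, that $\frac{1}{\e^2}\Uc_{\e,\alpha}$ is bounded in $H^1_\gamma(\o)$, and that $\frac{1}{\e^2}(\partial_\alpha\Uc_{\e,3}+\Rc_{\e,\alpha})$ is bounded in $L^2(\o)$. Weak compactness then yields a single subsequence along which all four quantities converge weakly to limits $\Uc_3$, $\Rc_\alpha$, $\Uc_\alpha$, $\Zc_\alpha$ in the respective spaces, all vanishing on $\gamma$ in the trace sense.

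\emph{Step 2: identification of the Kirchhoff-Love relation.} From the $L^2$ bound on $\frac{1}{\e^2}(\partial_\alpha\Uc_{\e,3}+\Rc_{\e,\alpha})$ I get
$$\Big\|\frac{1}{\e}\big(\partial_\alpha\Uc_{\e,3}+\Rc_{\e,\alpha}\big)\Big\|_{L^2(\o)}\le C\e,$$
so this sequence tends to $0$ strongly in $L^2(\o)$. Passing to the limit in the identity $\partial_\alpha\big(\frac{1}{\e}\Uc_{\e,3}\big)+\frac{1}{\e}\Rc_{\e,\alpha}=\frac{1}{\e}(\partial_\alpha\Uc_{\e,3}+\Rc_{\e,\alpha})$ in $\mathcal{D}'(\o)$ gives \eqref{eq46}. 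Since $\Rc_\alpha\in H^1(\o)$, the identity $\partial_\alpha\Uc_3=-\Rc_\alpha$ promotes $\Uc_3$ to $H^2(\o)$; combined with $\Uc_3=0$ and $\Rc=0$ on $\gamma$ (so $\nabla\Uc_3=-\Rc=0$ on $\gamma$), this yields $\Uc_3\in H^2_\gamma(\o)$.

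\emph{Step 3: strong convergence of $\frac{1}{\e}\Uc_{\e,3}$ in $H^1_\gamma(\o)$.} This is the only nontrivial convergence. I write
$$\partial_\alpha\Big(\frac{1}{\e}\Uc_{\e,3}\Big)=-\frac{1}{\e}\Rc_{\e,\alpha}+\frac{1}{\e}\big(\partial_\alpha\Uc_{\e,3}+\Rc_{\e,\alpha}\big).$$
The second term tends to $0$ strongly in $L^2(\o)$ by Step 2. For the first term, the Rellich--Kondrachov theorem applied to the bounded sequence $\frac{1}{\e}\Rc_{\e,\alpha}$ in $H^1(\o)$ gives strong convergence in $L^2(\o)$ (and in $L^p(\o)$ for every $p<\infty$ since $\o\subset\R^2$) to $\Rc_\alpha=-\partial_\alpha\Uc_3$. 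Hence $\partial_\alpha(\frac{1}{\e}\Uc_{\e,3})\to\partial_\alpha\Uc_3$ strongly in $L^2(\o)$. Combining this with the strong $L^2$ convergence of $\frac{1}{\e}\Uc_{\e,3}$ itself (again by Rellich) yields the claimed strong convergence in $H^1_\gamma(\o)$. The $L^4(\o)$ strong convergence of $\frac{1}{\e}\Rc_{\e,\alpha}$ is a direct consequence of the compact embedding $H^1(\o)\hookrightarrow L^4(\o)$ in dimension two.

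I do not expect any real obstacle: the only point that requires attention is that the bound on $\partial_\alpha\Uc_{\e,3}+\Rc_{\e,\alpha}$ is one order better in $\e$ than would be needed for mere boundedness of the rescaled divergence, and it is precisely this extra factor of $\e$ that converts the weak convergence $\frac{1}{\e}\Rc_{\e,\alpha}\wc-\partial_\alpha\Uc_3$ into the strong $H^1$ convergence of $\frac{1}{\e}\Uc_{\e,3}$ via the Rellich argument above.
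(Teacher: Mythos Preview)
Your proof is correct and follows essentially the same route as the paper: weak compactness from \eqref{est_final1} gives the subsequences, the extra factor of $\e$ in the bound on $\partial_\alpha\Uc_{\e,3}+\Rc_{\e,\alpha}$ forces \eqref{eq46}, and the strong $H^1$ convergence of $\frac{1}{\e}\Uc_{\e,3}$ is obtained by writing $\frac{1}{\e}\partial_\alpha\Uc_{\e,3}=\frac{1}{\e}(\partial_\alpha\Uc_{\e,3}+\Rc_{\e,\alpha})-\frac{1}{\e}\Rc_{\e,\alpha}$ and using Rellich on the second term. Your write-up is in fact more detailed than the paper's, which dispatches most of this as ``easy consequences'' and only spells out the decomposition for the strong convergence; your explicit verification that $\Uc_3\in H^2_\gamma(\o)$ and the remark on the $L^4$ embedding are welcome additions.
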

\begin{proof}
	The convergences and equalities are easy consequences of the above estimates \eqref{est_final1}. To see the strong convergence of \eqref{eq48}$_1$ note that  \eqref{EQ10-11-NEW}$_{1,2}$ and the strong convergence of $\ds \frac{1}{\e}\Rc_{\e,\alpha}$ in $L^2(\O)$ imply
	\begin{align*}
		\frac{1}{\e}\partial_\alpha \Uc_{\e,3} = \frac{1}{\e}(\partial_\alpha  \Uc_{\e,3} + \Rc_{\e,\alpha}) - \frac{1}{\e}\Rc_{\e,\alpha} \rightarrow 0 - \Rc_\alpha = \partial_\alpha\Uc_3,\quad \text{strongly in }L^2(\o).
	\end{align*}
	the last equality comes from \eqref{eq46}.
\end{proof}

\subsection{The unfolding and the unfolding and rescaling operators}

For the asymptotic behavior we introduce two operators: $\Te$ for the homogenization in $\o$ and $\mathfrak{T}_\e$ for the homogenization and dimension reduction in $\O^*_\e$. Both operators can be found in \cite{CDG} thus we recall here only the important properties.
\begin{definition} For every measurable function $\phi \in L^1(\o)$ we recall the definition of the unfolded function $\Te(\phi) \in L^1(\o\times \Yc')$ 
$$\Te(\phi)(x',y') = \phi\Big(\e \Big[\frac{x'}{\e}\Big]+ \e y'\Big)\qquad \hbox{for a.e. }(x',y')\in \o\times \Yc'.$$
For every measurable function $\psi \in L^1(\O^*_\e)$	the unfolding and the rescaling operator $\mathfrak{T}_\e$ is defined by
$$\mathfrak{T}_\e(\psi)(x', y) = \psi\Big(\e \Big[\frac{x'}{\e}\Big]+ \e y\Big)\qquad \hbox{for a.e. }(x',y)\in \o\times \Yc^*.$$
\end{definition}
 
\begin{lemma}\label{lemconv2}
	There exist a subsequence of $\{\e\}$, still denoted $\{\e\}$,  $\wh{\Uc}_\alpha, \wh{\Rc}_\alpha \in L^2(\o ; H^1_{per}(\Yc'))$ and $\mathfrak{u} \in L^2(\o ; H^1_{per}(\Yc'))$ such that 
	\begin{align}\label{eq415}
	\begin{split}
	&\frac{1}{\e}\Te(\nabla \Uc_{\e,3}) \longrightarrow \nabla \Uc_3\qquad \hbox{ strongly in } L^2(\o\times \Yc')^2\\
	&\frac{1}{\e}\Te(\Rc_{\e}) \longrightarrow \Rc\qquad \hbox{ strongly in } L^2(\o\times \Yc')^2\\
	&\frac{1}{\e^2}\Te(\nabla \Rc_{\e,\alpha}) \rightharpoonup \nabla \Rc_\alpha + \nabla_{y} \widehat{\Rc}_\alpha\qquad \hbox{ weakly in } L^2(\o\times \Yc')^2\\
	&\frac{1}{\e^2}\Te(\nabla \Uc_{\e,\alpha}) \rightharpoonup \nabla \Uc_\alpha + \nabla_{y} \widehat{\Uc}_\alpha\qquad \hbox{ weakly in } L^2(\o\times \Yc')^2\\
	& \frac{1}{\e^2}\Te(\partial_\alpha\Uc_{\e,3} +\Rc_{\e,\alpha}\big) \rightharpoonup \Zc_\alpha + \nabla_{y_\alpha} \mathfrak{u} +  \wh{\Rc}\qquad \hbox{ weakly in } L^2(\o\times \Yc').
	\end{split}
	\end{align}
Moreover, there exists $\overline{u}\in L^2(\o ; H^1_{per}(\Yc^*))^3$ such that
	\begin{align}\label{EQ57}
		\begin{split}
		&\frac{1}{\e^2} \TRe(\overline{u}_\e) \rightharpoonup \overline{u} \qquad \text{weakly in } L^2(\o ; H^1(\Yc^*)),\\
		&\frac{1}{\e^2} \TRe(\nabla \overline{u}_\e) \rightharpoonup \nabla_y \overline{u} \qquad \text{weakly in } L^2(\o\times \Yc^*)^9.
		\end{split}
	\end{align}	
 Furthermore, one has
\begin{equation}\label{EQ59}
{1\over \e^2}\TRe\big(\nabla u_\e(\nabla u_\e)^T\big)\longrightarrow \begin{pmatrix}
\partial_1\Uc_3\partial_1\Uc_3 & \partial_1\Uc_3\partial_2\Uc_3 & 0\\
\partial_1\Uc_3\partial_2\Uc_3 & \partial_2\Uc_3\partial_2\Uc_3 & 0\\
0 & 0 & \nabla \Uc_3\cdot\nabla \Uc_3
\end{pmatrix}\quad \hbox{strongly in } L^1(\o\times \Yc^*)^{9}.
\end{equation} The above convergence is weak in 
$L^2(\o\times \Yc^*)^{9}$.\\[1mm]
\end{lemma}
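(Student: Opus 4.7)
The strategy is to apply the general unfolding results of \cite{CDG} to each rescaled sequence, extract weak subsequential limits, and finally combine the strong $L^2$-convergences to pass to the limit in the quadratic term. For \eqref{eq415}$_{1,2}$, Lemma \ref{lemconv1} already provides $\frac{1}{\e}\Uc_{\e,3}\to \Uc_3$ strongly in $H^1_\gamma(\o)$ and $\frac{1}{\e}\Rc_{\e,\alpha}\to -\partial_\alpha\Uc_3$ strongly in $L^4(\o)$; since $\Te$ sends a strongly $L^2$-convergent sequence in $\o$ to a strongly $L^2(\o\times\Yc')$-convergent one with $y$-independent limit, both strong convergences follow. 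For \eqref{eq415}$_{3,4,5}$, the sequences $\frac{1}{\e^2}\Rc_{\e,\alpha}$ and $\frac{1}{\e^2}\Uc_{\e,\alpha}$ are bounded in $H^1_\gamma(\o)$ by \eqref{est_final1}$_2$ and $\frac{1}{\e^2}(\partial_\alpha\Uc_{\e,3}+\Rc_{\e,\alpha})$ is bounded in $L^2(\o)$ by \eqref{est_final1}$_3$; the standard two-scale theorem for $\Te$ then produces the correctors $\widehat\Rc_\alpha,\widehat\Uc_\alpha\in L^2(\o;H^1_{per}(\Yc'))$ and $\mathfrak u$ together with the claimed macroscopic weak limits.

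For \eqref{EQ57}, the scaling identity $\|\TRe(\psi)\|_{L^2(\o\times\Yc^*)}^2 = (|\Yc'|/\e)\|\psi\|_{L^2(\O^*_\e)}^2$ combined with the commutation $\nabla_y\TRe(\psi) = \e\,\TRe(\nabla\psi)$ and the residual bound \eqref{est_final1}$_1$ shows that the appropriately rescaled sequence is bounded in $L^2(\o;H^1(\Yc^*))$. A weak-compactness extraction gives the limit $\overline u$; the gradient convergence follows at once from the commutation identity, and $y'$-periodicity is inherited from the cellular reassembly of $\TRe$ as in \cite[Proposition 11.6]{CDG}.

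Finally, \eqref{EQ59} is the decisive step. I would write $u_\e = u_{e,\e} + \overline u_\e$ and expand the matrix $\nabla u_\e(\nabla u_\e)^T$ into four blocks. The crucial size separation coming from Lemma \ref{lem41} together with \eqref{EQ53} is that $\|\partial_\alpha u_{\e,\beta}\|_{L^2}$ and $\|\partial_3 u_{\e,3}\|_{L^2}$ are of order $\e^{5/2}$, whereas $\|\partial_3 u_{\e,\alpha}\|_{L^2}$ and $\|\partial_\alpha u_{\e,3}\|_{L^2}$ are only of order $\e^{3/2}$. After multiplication by $\e^{-2}$ and application of $\TRe$ (which injects a factor $\e^{-1}$ in $L^1$), every product containing at least one ``small'' gradient or at least one $\nabla\overline u_\e$ factor has $L^1(\o\times\Yc^*)$-norm $O(\e)$ by Cauchy--Schwarz, hence vanishes in the limit. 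What survives comes entirely from the two ``large'' components of the elementary part, namely $\partial_3 u_{e,\e,\alpha} = \Rc_{\e,\alpha}$ and $\partial_\alpha u_{e,\e,3} = \partial_\alpha\Uc_{\e,3}$; using Lemma \ref{lemconv1} and the compatibility \eqref{eq46}, the rescaled products $\e^{-2}\Rc_{\e,\alpha}\Rc_{\e,\beta}$ and $\e^{-2}\partial_\alpha\Uc_{\e,3}\partial_\beta\Uc_{\e,3}$ converge strongly in $L^1(\o)$ to $\partial_\alpha\Uc_3\,\partial_\beta\Uc_3$, and their reassembly into the correct matrix entries reproduces the right-hand side of \eqref{EQ59}. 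The main obstacle is this bookkeeping of orders: the extra $1/\e$ in the bounds for $\partial_3 u_{\e,\alpha}$ and $\partial_\alpha u_{\e,3}$ is precisely what produces the von-K\'arm\'an nonlinearity, so the sharpness of the Korn-type estimates in Lemma \ref{lem41} is indispensable, and every ``mixed'' term must be carefully checked to vanish.
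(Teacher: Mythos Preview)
Your treatment of \eqref{eq415} and \eqref{EQ57} is essentially the same as the paper's: both invoke Lemma~\ref{lemconv1} together with the standard unfolding compactness results of \cite{CDG}. For the strong $L^1$-convergence in \eqref{EQ59} your term-by-term bookkeeping is correct, but the paper's route is considerably shorter: instead of splitting $\nabla u_\e(\nabla u_\e)^T$ into sixteen pieces and estimating each, the paper observes directly that
\[
\frac{1}{\e}\,\TRe(\nabla u_\e)\longrightarrow
\begin{pmatrix}
0 & 0 & \Rc_1\\
0 & 0 & \Rc_2\\
\partial_1\Uc_3 & \partial_2\Uc_3 & 0
\end{pmatrix}
\quad\hbox{strongly in }L^2(\o\times\Yc^*)^9,
\]
which follows entry-by-entry from the strong convergences \eqref{eq415}$_{1,2}$ and the fact that the remaining entries are $o(1)$ in $L^2$ after unfolding. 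The $L^1$-limit of the quadratic expression is then immediate as a product of two strongly $L^2$-convergent factors, with no separate Cauchy--Schwarz accounting needed. Your approach reaches the same conclusion, just with more labor.

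There is, however, a genuine omission: you do not address the final sentence of the lemma, namely that the convergence \eqref{EQ59} also holds \emph{weakly in $L^2$}. Strong $L^1$-convergence alone does not give this; one needs an $L^2$-bound on $\e^{-2}\TRe\big(\nabla u_\e(\nabla u_\e)^T\big)$, and your size-separation argument cannot supply it because you have no $L^4$-control on $\nabla\overline u_\e$. The paper closes this gap with the algebraic identity
\[
\nabla u_\e(\nabla u_\e)^T \;=\; \big((\nabla v_\e)^T\nabla v_\e-\GI_3\big)\;-\;2\,e(u_\e),
\]
together with the Green--St~Venant bound $\|(\nabla v_\e)^T\nabla v_\e-\GI_3\|_{L^2(\O_\e)}\le C\e^{5/2}$ (from \eqref{vvtest}) and $\|e(u_\e)\|_{L^2(\O_\e)}\le C\e^{5/2}$ (from \eqref{EQ53}). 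This yields the required $L^2$-boundedness, and then strong $L^1$-convergence plus $L^2$-boundedness upgrades to weak $L^2$-convergence. You should add this step.
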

\begin{proof} The first convergence \eqref{eq415}$_{1}$ is a consequence of \eqref{eq48} and the classical results of the PUM.
	Convergences \eqref{eq415}$_{2,3,4}$ come from the convergences in Lemma \ref{lemconv1} and again of the classical results of the PUM (see \cite{CDG}).
	The last convergence \eqref{eq415}$_{5}$ is a consequence of \cite[Lemma 11.11]{CDG}, together with the convergences \eqref{eq48}$_4$ and \eqref{eq415}$_{3}$.\\[1mm]
		\begin{align*}
			 \nabla u_\e 
			= \begin{pmatrix}
			\partial_1 \Uc_{\e,1} + x_3 \partial_1 \Rc_{\e,1} & \partial_2 \Uc_{\e,1} + x_3 \partial_2 \Rc_{\e,1}& \Rc_{\e,1}\\
			\partial_1 \Uc_{\e,2} + x_3 \partial_1 \Rc_{\e,2}  & \partial_2 \Uc_{\e,2} + x_3 \partial_2 \Rc_{\e,2}& \Rc_{\e,2}\\
			\partial_1 \Uc_{\e,3} & \partial_2 \Uc_{\e,3}  & 0
			\end{pmatrix}+\nabla \overline{u}_\e
		\end{align*}

		Then since \eqref{eq415}$_{1,2}$ are strong convergences and the other fields converge to zero due to \eqref{eq415}$_{3,2}$ and \eqref{EQ59} we obtain
		\begin{align}\label{ConvN}
			{1\over \e}\TRe\big(\nabla u_\e\big)\longrightarrow  \begin{pmatrix}
			0 & 0 & \Rc_1\\
			0 & 0 & \Rc_2\\
			\partial_1\Uc_3 & \partial_2\Uc_3 & 0\\
			\end{pmatrix}\quad \hbox{strongly in } L^2(\o\times \Yc^*)^{9}.
		\end{align} 
Hence, using \eqref{eq46} this yields
			\begin{multline*}
			{1\over \e^2}\TRe\big(\nabla u_\e(\nabla u_\e)^T\big)\longrightarrow \begin{pmatrix}
			0 & 0 & \Rc_1\\
			0 & 0 & \Rc_2\\
			-\Rc_1 & -\Rc_2 & 0\\
			\end{pmatrix}\begin{pmatrix}
			0 & 0 & -\Rc_1\\
			0 & 0 & -\Rc_2\\
			\Rc_1 & \Rc_2 & 0\\
			\end{pmatrix}\\
			= \begin{pmatrix}
			\Rc_1\Rc_1& \Rc_1\Rc_2 & 0\\
			\Rc_1\Rc_2 &\Rc_2\Rc_2 & 0\\
			0 & 0 & \Rc_1^2 + \Rc_2^2
			\end{pmatrix}\quad \hbox{strongly in } L^1(\o\times \Yc^*)^{9}.
			\end{multline*}
Now note that the sequence is actually bounded in $L^2(\O \times \Yc^*)$ by
	\begin{align*}
		\|\nabla u_\e(\nabla u_\e)^T\|_{L^2(\O_\e)} &\leq \|\nabla u_\e(\nabla u_\e)^T+ 2 e(u_\e)\|_{L^2(\O_\e)} + 2 \|e(u_\e)\|_{L^2(\O_\e)}\\
		&=  \|\nabla v_\e(\nabla v_\e)^T-\GI_3\|_{L^2(\O_\e)} + 2 \|e(u_\e)\|_{L^2(\O_\e)} \leq C \e^{5/2}
	\end{align*}
Hence, the sequence $\ds\Big\{{1\over \e^2}\TRe\big(\nabla u_\e(\nabla u_\e)^T\big)\Big\}_\e$ is bounded in $L^2(\O\times \Yc^*)^9$.	This ensures that \eqref{EQ59} is also weakly converging in 
$L^2(\O\times \Yc^*)$.
\end{proof}

Eventually, we find with \eqref{ConvN} 
\begin{align}\label{defo_conv}
	\mathfrak{T}_\e(\nabla v_\e) \rightarrow \GI_3 \qquad \text{strongly in } L^2(\o\times \Yc^*)^9.
\end{align}
Additionally, the displacements converge as follows 
\begin{align}\label{u-conv}
\begin{split}
	&\frac{1}{\e^{2}} \mathfrak{T}_\e (u_{\e,\alpha}) \rightarrow \Uc_\alpha - y_3 \partial_\alpha \Uc_3 \qquad \text{strongly in } L^2(\O\times \Yc),\\
	&\frac{1}{\e^{1}} \mathfrak{T}_\e (u_{\e,3}) \rightarrow \Uc_3 \qquad \text{strongly in } L^2(\O\times \Yc).
\end{split}
\end{align}
The above convergences show that the limit displacement is of Kirchhoff-Love type.\\
Then, we have
\begin{lemma}\label{lemconve}
\noindent For a subsequence  we have
\begin{equation}\label{EQ510}
{1\over 2\e^{2}}\TRe \big((\nabla v_\e)^T\nabla  v_\e -\GI_3\big)\rightharpoonup 
\GE(\Uc) + e_{y}(\widehat{u})\qquad\hbox{weakly in}\qquad L^2(\o \times \Yc^* )^9,
\end{equation}
 where the symmetric matrix $\GE(\Uc)$ is defined by
\begin{align*}
\GE(\Uc)=\begin{pmatrix}
-y_3 {\partial^2 {\cal U}_3\over \partial x_1^2}+{\cal Z}_{11} & -y_3 {\partial^2 {\cal U}\over \partial x_1\partial x_2}+{\cal Z}_{12} 
& 0 \\
* & -y_3 {\partial^2 {\cal U}_3\over \partial x_2^2}+{\cal Z}_{22}  & 0\\
0 & 0&   0
\end{pmatrix}
\end{align*}
where
\begin{multline*}
\widehat{u}(x',y) =\\
 \overline{u}(x',y) + \frac{y_3}{2}(\Zc_1(x')\cdot\Ge_3)\Ge_1 + \frac{y_3}{2}(\Zc_2(x')\cdot\Ge_3)\Ge_2 + y_3\widehat{\Rc}(x',y')\land \Ge_3 + \mathfrak{u}(x',y')+y_3|\nabla \Uc_3(x')|^2\Ge_3\\
\hbox{for a.e. } (x',y)\in \o\times \Yc^*
\end{multline*}
and 
$${\cal Z}_{\alpha\beta}= e_{\alpha\beta}({\cal U})+{1\over 2}{\partial {\cal U}_3\over \partial x_\alpha}{\partial {\cal U}_3\over \partial x_\beta}\qquad (\alpha,\beta)\in\{1,2\}^2.$$
\end{lemma}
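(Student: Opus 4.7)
The plan is to use the algebraic identity
$$(\nabla v_\e)^T\nabla v_\e - \GI_3 \;=\; 2\,e(u_\e) \;+\; (\nabla u_\e)^T\nabla u_\e$$
and to pass separately to the limit in the two summands after scaling by $\e^{-2}$ and applying $\TRe$.

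For the \emph{quadratic term} I would first observe that \eqref{vvtest} combined with $\|e(u_\e)\|_{L^2(\O_\e)}\le C\e^{5/2}$ yields $\|(\nabla u_\e)^T\nabla u_\e\|_{L^2(\O_\e)}\le C\e^{5/2}$, so $\e^{-2}\TRe((\nabla u_\e)^T\nabla u_\e)$ is bounded in $L^2(\o\times\Yc^*)$. The strong $L^2$-convergence \eqref{ConvN} of $\e^{-1}\TRe(\nabla u_\e)$ then lets the matrix product pass to the limit exactly as was done for $\nabla u_\e(\nabla u_\e)^T$ in \eqref{EQ59}, now applied to $M^TM$ with the limit matrix $M$ of \eqref{ConvN}; using $\Rc=-\nabla\Uc_3$ from \eqref{eq46}, one gets strong $L^1$ and, by the $L^2$-boundedness, weak $L^2$ convergence to the symmetric matrix whose upper $2\times 2$ block has entries $\partial_\alpha\Uc_3\,\partial_\beta\Uc_3$, whose $(\alpha,3)$ entries vanish, and whose $(3,3)$ entry equals $|\nabla\Uc_3|^2$. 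Dividing by $2$ supplies the nonlinear half of $\Zc_{\alpha\beta}$ in the upper block and a $(3,3)$ contribution that will be re-absorbed by the $y_3|\nabla\Uc_3|^2\Ge_3$ piece of $\wh u$.

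For the \emph{linear term} $\e^{-2}\TRe(e(u_\e))$ I would decompose $u_\e=u_{e,\e}+\overline u_\e$ via Lemma \ref{lem41} and pass to the limit entry by entry. The in-plane strains $e_{\alpha\beta}(u_{e,\e})=e_{\alpha\beta}(\Uc_\e)+x_3\,e_{\alpha\beta}(\Rc_\e)$, rescaled by $\e^{-2}$, unfolded, and combined with \eqref{eq415}$_{3,4}$, the identity $\TRe(x_3)=\e y_3$, and $\Rc=-\nabla\Uc_3$, converge weakly to $e_{\alpha\beta}(\Uc)+e_{y,\alpha\beta}(\wh\Uc)-y_3\,\partial_\alpha\partial_\beta\Uc_3$. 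The shear strains $e_{\alpha 3}(u_{e,\e})=\tfrac12(\partial_\alpha\Uc_{\e,3}+\Rc_{\e,\alpha})$ converge by \eqref{eq415}$_5$ to $\tfrac12(\Zc_\alpha+\partial_{y_\alpha}\mathfrak u+\wh\Rc_\alpha)$, while $e_{33}(u_{e,\e})=0$. Finally \eqref{EQ57} contributes $e_y(\overline u)$ from the residual displacement.

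The \emph{main obstacle} is not analytic but organizational: the corrector fields $\overline u,\wh\Uc_\alpha,\wh\Rc_\alpha,\mathfrak u$, the shear data $\Zc_\alpha$, and the nonlinear transverse term $|\nabla\Uc_3|^2$ must all be packaged into a single vector field $\wh u\in L^2(\o;H^1_{per}(\Yc^*))^3$ whose symmetrized $y$-gradient $e_y(\wh u)$ reproduces every non-$\GE(\Uc)$ piece of the assembled limit. A direct check against the explicit formula for $\wh u$ in the statement shows that its two in-plane components produce $e_{y,\alpha\beta}(\wh\Uc)$ together with the shear half-correctors $\tfrac12\Zc_\alpha$, the term $y_3\wh\Rc\wedge\Ge_3$ supplies the in-plane rotation contribution, and the third component delivers the required $|\nabla\Uc_3|^2$ in $e_{y,33}$; the $Y'$-periodicity of $\wh u$ is inherited from that of its building blocks.
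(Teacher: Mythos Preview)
Your proposal is correct and follows essentially the same route as the paper: split $(\nabla v_\e)^T\nabla v_\e-\GI_3$ into $2e(u_\e)$ plus the quadratic term, handle the quadratic part via the strong convergence \eqref{ConvN} together with the $L^2$-bound (exactly as in \eqref{EQ59}), and handle the linear symmetric gradient via the unfolding convergences of Lemmas \ref{lemconv1}--\ref{lemconv2}. The only difference is that the paper dispatches the limit of $\e^{-2}\TRe(e(u_\e))$ by citing \cite{CDG} directly, whereas you spell out the entry-by-entry computation and the packaging of the correctors into $\wh u$; your treatment of the $(3,3)$ entry via the $y_3|\nabla\Uc_3|^2\Ge_3$ term of $\wh u$ is precisely what the paper's definition of $\wh u$ encodes.
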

\begin{proof}
First, in the strain tensor $\nabla v (\nabla v)^T -\GI_3$ replace  the deformation by its associated displacement $u = v- I_d$. This yields
\begin{align}
	\nabla v (\nabla v)^T -\GI_3 = \nabla u (\nabla u)^T  + \nabla u + (\nabla u)^T = \nabla u (\nabla u)^T  + 2 e(u).
\end{align}
The first term on the right hand side is already covered in \eqref{EQ59}. Hence, consider now $\frac{1}{\e^2} \TRe (e(u))$. However, this is already done in \cite{CDG} and yields
\begin{align}\label{linstrainlimit}
	\frac{1}{\e^2} \TRe (e(u)) \rightharpoonup 
	\begin{pmatrix}
	e_{11}(\Uc) -y_3 \frac{\partial^2\Uc_3}{\partial x_1^2} & e_{12}(\Uc) -y_3 \frac{\partial^2\Uc_3}{\partial x_1\partial_2} & 0\\
	* & e_{22}(\Uc) -y_3 \frac{\partial^2\Uc_3}{\partial x_2^2} & 0\\
	0 & 0 & 0
	\end{pmatrix} 
	+ e_y(\wh{u}), \quad \text{weakly in } L^2(\O \times \Yc^*),
\end{align}
where we define
\begin{align*}
	\widehat{u}(x',y) = \overline{u}(x',y) + \frac{y_3}{2}(\Zc_1(x')\cdot\Ge_3)\Ge_1 + \frac{y_3}{2}(\Zc_2(x')\cdot\Ge_3)\Ge_2 + y_3\widehat{\Rc}(x',y')\land \Ge_3 + \mathfrak{u}(x',y') +y_3|\nabla \Uc_3(x')|^2\Ge_3
\end{align*}
 for a.e. $(x',y)\in \o\times \Yc^*$. Upon rewriting the result this yields the claim.
\end{proof}

Note, that the antisymmetric part is responsible for the nonlinearity of the problem.
Finally we prove that in the limit problems and in the case of glued yarns, one can replace the $e_{\alpha\beta}({\cal U})$'s with the ${\cal Z}_{\alpha\beta}({\cal U})$'s.

\subsection{The limit problem}

The limits of the previous section allow to investigate the limit of the elastic problem. Therefore recall the energy of the elasticity problem in the limit 
\begin{align} \label{limenergy}
{\cal J}(\Uc,\widehat{u}) = \int_\o \int_{\Yc^*} \widehat{W}\big(y, \GE(\Uc) + e_{y}(\widehat{u})\big)\; dydx' -|\Yc^*| \int_\o f \cdot \Uc \; dx'.
\end{align}
Define the limit space
\begin{align*}
\UU:= \left\{ \Uc=\big(\Uc_1,\Uc_2,\Uc_3\big) \in  H^1(\o)^2 \times H^2(\o)\; \mid\; \Uc=0,\;\;  \partial_\alpha \Uc_3 = 0\quad \hbox{a.e.  on } \gamma \right\}
\end{align*}
Furthermore, set 
\begin{align}
J(\Uc,\wh{u}) &=  \int_\o \int_{\Yc^*} \widehat{W}\big(y, \GE(\Uc) + e_{y}(\widehat{u})\big)\; dydx',
\end{align}
the part of the energy without the external force. Thus we can write
\begin{align*}
{\cal J}(\Uc,\wh{u}) &= J(\Uc,\wh{u}) -|\Yc^*| \int_\o f \cdot \Uc \; dx'.
\end{align*}
First we prove that the functional ${\cal J}$ admits a minimum on $\UU \times L^2(\o ; H^1_{per}(\Yc^*))^3$.\\
For every $(\xi,\zeta,\wh{w})\in \mathbb{S}\doteq\mathbb{R}^3\times\mathbb{R}^3\times H_{per,0}^1(\Yc^*)^3$ denote $\wt{\cal E}$ the symmetric matrix
$$
\wt{\cal E}(\xi,\zeta,\wh{w})=
\begin{pmatrix}
\xi_1-y_3\zeta_1+ e_{11,y}(\wh w)  & \xi_3 -y_3 \zeta_3 + e_{12,y}(\wh w) 
&  e_{13,y}(\wh w)  \\
* & \xi_2-y_3\zeta_2 + e_{22,y}(\wh w)  &  e_{23,y}(\wh w) \\
* & *&    e_{33,y}(\wh w) 
\end{pmatrix}
$$

\begin{lemma}\label{lem_strain_semi}
We equip the space $\mathbb{S}\doteq\mathbb{R}^3\times\mathbb{R}^3\times H_{per,0}^1(\Yc^*)^3$ with the semi-norm
	$$	\|(\xi,\zeta,\wh{w})\|_\mathbb{S} = \sqrt{\sum_{i,j=1}^3  \|\wt{\cal E}_{ij}(\xi,\zeta,\wh{w})\|^2_{L^2( \Yc^*)}}.
	$$   
	Then, this expression actually defines a norm on $\mathbb{S}$ equivalent to the product-norm.
\end{lemma}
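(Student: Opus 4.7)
The direction $\|(\xi,\zeta,\wh w)\|_\mathbb{S} \le C\|(\xi,\zeta,\wh w)\|_{\text{prod}}$ is immediate: every entry of $\wt{\cal E}$ is a sum of a polynomial in $y_3$ with coefficients $\xi_i, \zeta_i$ and a first-order derivative of $\wh w$, each of which is controlled in $L^2(\Yc^*)$ by $|\xi| + |\zeta| + \|\wh w\|_{H^1(\Yc^*)}$. The whole work is thus in the reverse inequality and in checking that $\|\cdot\|_\mathbb{S}$ separates points.

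The key algebraic observation is that the $\xi$- and $\zeta$-parts of $\wt{\cal E}$ are themselves realized as a strain. Define the explicit polynomial
\begin{equation*}
V(y;\xi,\zeta)=\begin{pmatrix} \xi_1 y_1+\xi_3 y_2 - y_3(\zeta_1 y_1+\zeta_3 y_2) \\ \xi_3 y_1+\xi_2 y_2 - y_3(\zeta_3 y_1+\zeta_2 y_2) \\ \tfrac{1}{2}(\zeta_1 y_1^2+2\zeta_3 y_1 y_2+\zeta_2 y_2^2) \end{pmatrix}.
\end{equation*}
A direct check shows $e_y(V)=\wt{\cal E}(\xi,\zeta,0)$, so that
\begin{equation*}
\wt{\cal E}(\xi,\zeta,\wh w)=e_y(V+\wh w).
\end{equation*}
This reduces the whole lemma to a Korn-type statement on the connected periodicity cell $\Yc^*$.

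Assuming $\|(\xi,\zeta,\wh w)\|_\mathbb{S}=0$, then $e_y(V+\wh w)=0$ on $\Yc^*$, so $V+\wh w$ is a rigid displacement $a+b\wedge y$. Since $\wh w$ is $(y_1,y_2)$-periodic, the non-periodic terms of $V$ must be balanced by the linear part $b\wedge y$. Writing out the jump conditions $V(y+2\Ge_\alpha)-V(y)=b\wedge 2\Ge_\alpha$ yields, after identification in powers of $y_1,y_2,y_3$, that $\xi=0$, $\zeta=0$ and $b=0$. Consequently $\wh w=a$ is constant and the zero-mean requirement in $H^1_{per,0}(\Yc^*)^3$ forces $\wh w=0$.

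For the equivalence of norms, I would argue by contradiction and compactness. Suppose there exists a sequence $(\xi^n,\zeta^n,\wh w^n)$ with $\|(\xi^n,\zeta^n,\wh w^n)\|_{\text{prod}}=1$ and $\|(\xi^n,\zeta^n,\wh w^n)\|_\mathbb{S}\to 0$. By boundedness, up to a subsequence, $\xi^n\to\xi$, $\zeta^n\to\zeta$ in $\R^3$ and $\wh w^n\rightharpoonup \wh w$ in $H^1(\Yc^*)^3$ with strong $L^2$ convergence. Passing to the weak limit in $\wt{\cal E}$ yields $\wt{\cal E}(\xi,\zeta,\wh w)=0$, hence $\xi=\zeta=0$ and $\wh w=0$ by the point-separation step above. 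To upgrade $\wh w^n\to 0$ to strong $H^1$ convergence, I would invoke Korn's inequality on the connected Lipschitz cell $\Yc^*$ in the form $\|\wh w^n\|_{H^1(\Yc^*)}\le C\|e_y(\wh w^n)\|_{L^2(\Yc^*)}$ (valid in $H^1_{per,0}(\Yc^*)^3$, as the only $(y_1,y_2)$-periodic rigid motions are constants and are killed by the zero-mean condition), and then bound
\begin{equation*}
\|e_y(\wh w^n)\|_{L^2(\Yc^*)}\le \|\wt{\cal E}(\xi^n,\zeta^n,\wh w^n)\|_{L^2(\Yc^*)}+\|e_y(V(\cdot;\xi^n,\zeta^n))\|_{L^2(\Yc^*)}\longrightarrow 0.
\end{equation*}
This contradicts $\|(\xi^n,\zeta^n,\wh w^n)\|_{\text{prod}}=1$.

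The main obstacle is the Korn inequality on $\Yc^*$ with $(y_1,y_2)$-periodic boundary conditions and zero mean: one must ensure that $\Yc^*$ (the textile cell obtained from $Y^*$ by two reflections) is connected and Lipschitz, so that the standard Korn argument applies and the quotient space of rigid motions reduces to constants under periodicity. Everything else is a matter of identifying coefficients in the polynomial $V$.
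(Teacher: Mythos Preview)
Your argument is essentially the same as the paper's: you introduce the auxiliary polynomial field (your $V$, the paper's $\tau$) so that $\wt{\cal E}(\xi,\zeta,\wh w)=e_y(V+\wh w)$, deduce rigidity when the semi-norm vanishes, and then kill $\xi,\zeta,b,a,\wh w$ from periodicity and zero mean; for the equivalence of norms you spell out the contradiction/compactness step that the paper only announces in one line. Your identification of the Korn inequality on $\Yc^*$ with periodic and zero-mean constraints as the underlying tool is exactly right, and your version of $V$ actually has the correct sign in the third component (the paper's $\tau_3$ carries a sign slip that would make $e_{13,y}(\tau)\neq 0$).
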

\begin{proof} To show that the semi-norm is actually a norm it is necessary to show the positive definiteness, i.e., $\|(\xi,\zeta,\wh{w})\|_\mathbb{S} = 0$ implies $(\xi,\zeta,\wh{w})=0$.
	
	Let $(\xi,\zeta,\wh{w}) \in \mathbb{S}$ satisfy $\|(\xi,\zeta,\wh{w})\|_\mathbb{S} = 0$ and define the map
	\begin{align*}
		\tau(y) = \begin{pmatrix}
		y_1\left(\xi_1 - y_3\zeta_1\right) + y_2\left(\xi_3 - y_3\zeta_3\right)\\
		y_1\left(\xi_3 - y_3\zeta_3\right) + y_2\left(\xi_2 - y_3\zeta_2\right)\\
		-\frac{y_1^2}{2}\zeta_1-\frac{y_2^2}{2}\zeta_2 - y_1y_2\zeta_3
		\end{pmatrix}.
	\end{align*}
	Then rewrite
	\begin{align}
		\wt{\cal E}(\xi,\zeta,\wh{w}) = e_{y}(\tau + \wh w).
	\end{align}
	Hence, $\tau(y) + \wh w(y) = a + b\land y$ is a rigid motion. Then the properties of $\wh{w}\in H^1_{per,0}(\Yc^*)$ (periodicity in the directions $\Ge_1$, $\Ge_2$ and vanishing mean) imply 
	that $a=b=\xi=\zeta=0$ and thus also $\wh{w}=0$.
	
	Finally, by a contradiction argument it is easy to prove that there exists a constant $C$ such that
	\begin{align}
		\|\xi\|_2 + \|\zeta\|_2 + \|\wh{w}\|_{H^1_{per,0}(\Yc^*)} \leq C \|(\xi,\zeta,\wh{w})\|_\mathbb{S}
	\end{align}
	holds for all $(\xi,\zeta,\wh{w}) \in \mathbb{S}$.
\end{proof}

\begin{lemma}
The functional ${\cal J}$ admits a minimum on $\UU \times L^2(\o ; H^1_{per,0}(\Yc^*))^3$.
\end{lemma}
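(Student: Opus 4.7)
The plan is to apply the direct method of the calculus of variations. First I would show that ${\cal J}$ is bounded below. Applying Lemma~\ref{lem_strain_semi} pointwise in $x'$ (identifying $\xi$, $\zeta$ with the membrane and bending entries of $\GE(\Uc)$ and $\wh w$ with $\wh u$) together with \eqref{QPD} yields
$$J(\Uc,\wh u)\;\ge\; c\Bigl(\|\nabla^2 \Uc_3\|_{L^2(\o)}^2+\sum_{\alpha,\beta}\|\Zc_{\alpha\beta}\|_{L^2(\o)}^2+\|\wh u\|_{L^2(\o;H^1(\Yc^*))}^2\Bigr).$$
The clamped conditions on $\gamma$ combined with the Poincaré and Korn inequalities, the identity $e_{\alpha\beta}(\Uc)=\Zc_{\alpha\beta}-\tfrac{1}{2}\partial_\alpha \Uc_3\,\partial_\beta \Uc_3$, and the Sobolev embedding $H^2(\o)\hookrightarrow W^{1,4}(\o)$ give
$$\|\Uc_3\|_{H^2(\o)}\le C\,\|\nabla^2 \Uc_3\|_{L^2(\o)},\qquad \|\Uc_\alpha\|_{H^1(\o)}\le C\bigl(\|\Zc\|_{L^2(\o)}+\|\nabla^2 \Uc_3\|_{L^2(\o)}^2\bigr).$$
Inserting these into the linear forcing $-|\Yc^*|\int_\o f\cdot \Uc$ and using Young's inequality, one produces a lower bound on ${\cal J}$, provided $\|f\|_{L^2(\o)}$ is small enough to absorb the quadratic $\|\nabla^2\Uc_3\|_{L^2}^2$ contribution coming from the membrane nonlinearity; this is precisely the role of the standing hypothesis \eqref{C2}.

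Next, pick a minimizing sequence $\{(\Uc^n,\wh u^n)\}$. The same chain of inequalities bounds $\Uc_3^n$ in $H^2(\o)$, $\Uc_\alpha^n$ in $H^1(\o)$ and $\wh u^n$ in $L^2(\o;H^1_{per,0}(\Yc^*))^3$. Extracting a subsequence I would get $\Uc_3^n\rightharpoonup \Uc_3$ in $H^2(\o)$, $\Uc_\alpha^n\rightharpoonup \Uc_\alpha$ in $H^1(\o)$ and $\wh u^n\rightharpoonup \wh u$ in $L^2(\o;H^1(\Yc^*))^3$. The boundary conditions defining $\UU$ pass to the limit by weak continuity of traces, so $(\Uc,\wh u)$ is an admissible candidate.

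The decisive step is passing to the limit in the nonlinearity $\tfrac{1}{2}\partial_\alpha \Uc_3^n\,\partial_\beta \Uc_3^n$ appearing in $\Zc_{\alpha\beta}^n$. The compactness of the embedding $H^2(\o)\hookrightarrow W^{1,4}(\o)$ in dimension two gives $\nabla \Uc_3^n\to \nabla \Uc_3$ strongly in $L^4(\o)^2$, so the quadratic product converges strongly in $L^2(\o)$; together with the weak convergence of $e_{\alpha\beta}(\Uc^n)$ in $L^2(\o)$ this yields $\GE(\Uc^n)+e_y(\wh u^n)\rightharpoonup \GE(\Uc)+e_y(\wh u)$ weakly in $L^2(\o\times \Yc^*)^{3\times 3}$. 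Since $Q(y,\cdot)$ is a nonnegative convex quadratic form, $J$ is weakly sequentially lower semicontinuous, and the linear force term is continuous for the weak topology of $H^1(\o)^2\times H^2(\o)$. Therefore ${\cal J}(\Uc,\wh u)\le \liminf_n {\cal J}(\Uc^n,\wh u^n)=\inf {\cal J}$, so $(\Uc,\wh u)$ is a minimizer. The main obstacle is the coupling between the bending mode $\Uc_3$ and the nonlinear membrane term $(\nabla \Uc_3)^2$ inside $\Zc_{\alpha\beta}$: it forces the smallness assumption \eqref{C2} for coercivity, and for weak lower semicontinuity one must use the two-dimensional compact embedding $H^2(\o)\hookrightarrow W^{1,4}(\o)$ to turn the nonlinear term from a weak into a strong limit.
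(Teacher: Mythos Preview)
Your proposal is correct and follows essentially the same route as the paper: both apply the direct method, invoke Lemma~\ref{lem_strain_semi} with \eqref{QPD} for coercivity, use the embedding $H^2(\o)\hookrightarrow W^{1,4}(\o)$ to control $e_{\alpha\beta}(\Uc)$ in terms of $J(\Uc,\wh u)$, and rely on the smallness assumption \eqref{C2} to close the estimate. The only noteworthy difference is that you spell out the weak lower semicontinuity step---showing $\nabla\Uc_3^n\to\nabla\Uc_3$ strongly in $L^4(\o)^2$ by compact embedding so that $\GE(\Uc^n)+e_y(\wh u^n)\rightharpoonup\GE(\Uc)+e_y(\wh u)$ weakly---whereas the paper simply asserts lower semicontinuity of ${\cal J}$ without justifying the passage through the nonlinearity; your added detail is a genuine improvement in rigor.
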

\begin{proof} First, from \eqref{QPD} and Lemma \ref{lem_strain_semi}, there exists a constant $C>0$ such that
\begin{equation}\label{EQ513}
\begin{aligned}
C\Big(\sum_{\alpha,\beta=1}^2\Big[&\Big\|e_{\alpha\beta}(\Wc)+{1\over 2}{\partial \Wc_3\over \partial x_\alpha}{\partial {\cal W}_3\over \partial 
x_\beta}\Big\|^2_{L^2(\o)}+\Big\|{\partial^2\Wc_3\over \partial x_\alpha\partial x_\beta}\Big\|^2_{L^2(\o)}\Big]+\|\wh{w}\|^2_{L^2(\o; H^1(\Yc^*))}\big)\leq J(\Wc,\wh{w}),\\
& \hbox{for all } (\Wc,\wh{w})\in \UU \times L^2(\o ; H^1_{per,0}(\Yc^*))^3.
\end{aligned}
\end{equation}
Set
			\begin{align*}
			m = \inf_{(\Uc,\wh{u})\in \UU \times L^2(\o ; H^1_{per,0}(\Yc^*))^3} {\cal J}(\Uc,\wh{u})
			\end{align*}
			where $m\in [-\infty, 0]$.\\[1mm]
\noindent {\it Step 1.} We show that 	$m\in (-\infty, 0]$.\\[1mm]		
To show that $m$ is actually finite we show that the sequence is bounded and thus admits a weak convergent subsequence and then use the weak sequential continuity of ${\cal J}$.
			
			The boundedness of $\Uc_i$ are show with the help of the functional $J$.
			Now, consider first $\Uc_3$, which using \eqref{EQ513} together with the boundary conditions satisfies
			\begin{align}\label{U3est}
\|\wh{u}\|^2_{L^2(\o; H^1(\Yc^*))}\leq J(\Uc,\wh{u}),\qquad 	\|\Uc_3\|^2_{H^2(\o)} \leq C\sum_{\alpha,\beta=1}^2\Big\|{\partial^2\Uc_3\over \partial x_\alpha\partial x_\beta}\Big\|^2_{L^2(\o)} 
\leq C_2 J(\Uc,\wh{u}).
			\end{align}
			Similarly, the estimate for $\Uc_\alpha$ is obtained. For this keep in mind that in the energy only $\Zc_{\alpha\beta}$ arise and we arrive at
			\begin{align*}
			\sum_{\alpha,\beta=1}^2 \|e_{\alpha\beta}(\Uc)\|^2_{L^2(\o)} &\leq cJ(\Uc,\wh{u}) + \|\nabla\Uc_3\|^4_{L^4(\o)}\leq cJ(\Uc,\wh{u}) +\|\Uc_3\|^4_{H^2(\o)} \leq cJ(\Uc,\wh{u}) + \left[C_1C_2 J(\Uc,\wh{u})\right]^2.
			\end{align*}
			Note that we used here the embedding $H^2(\o) \hookrightarrow W^{1,4}(\o)$.
			The 2D-Korn inequality then yields 
			\begin{align}\label{U1U2est}
			\|\Uc_1\|^2_{H^1(\O)} + \|\Uc_2\|^2_{H^1(\O)} \leq cJ(\Uc,\wh{u}) + \left[C_1C_2 J(\Uc,\wh{u})\right]^2.
			\end{align}
			
			With \eqref{U3est} and \eqref{U1U2est} we have for the sequence that
			\begin{align}\label{functionalest}
			\begin{split}
			J(\Uc,\wh{u}) &\leq \|f_3\|_{L^2(\o)}\|\Uc_3\|_{L^2(\o)} + \sqrt{\|f_1\|^2_{L^2(\o)}+\|f_2\|^2_{L^2(\o)}}\left[\|\Uc_1\|_{L^2(\o)} + \|\Uc_2\|_{L^2(\o)}\right]\\
			&\leq \|f_3\|_{L^2(\o)} \sqrt{J(\Uc,\wh{u})} + \sqrt{\|f_1\|^2_{L^2(\o)}+\|f_2\|^2_{L^2(\o)}} \left[c\sqrt{J(\Uc,\wh{u})} + C_1C_2 J(\Uc,\wh{u}) \right]
			\end{split}
			\end{align}
			
			Thus we also have
			\begin{align}\label{Jest}
			\begin{split}
			J(\Uc,\wh{u}) &\leq c \|f\|_{L^2(\o)}\sqrt{J(\Uc,\wh{u})} + C_1C_2 \|f\|_{L^2(\o)} J(\Uc,\wh{u})
			\end{split}
			\end{align}
			which shows that $J(\Uc,\wh{u})$ is bounded if and only if $C_1C_2\|f\|_{L^2(\o)}\leq 1$, which is the same constraint as before in Lemma \ref{lem43}. Then, we have
			\begin{multline*}
\forall (\Uc,\wh{u})\in \UU \times L^2(\o ; H^1_{per,0}(\Yc^*))^3,\quad \\
{\cal J}(\Uc,\wh{u})\leq 0\quad \Longrightarrow \quad \|\Uc_1\|_{H^1(\o)}+\|\Uc_2\|_{H^1(\o)}+\|\Uc_3\|_{H^2(\o)}+\|\wh{u}\|_{L^2(\o; H^1(\Yc^*))}\leq C.
			\end{multline*}
Then we easily show that $m\in (-\infty,0]$.\\[1mm]
\noindent{\it Step 2.} We show that $m$ is a minimum.\\[1mm]
			Consider a minimizing sequence $\{(\Uc^n,\wh{u}^n)\}_n \subset \UU \times L^2(\o ; H^1_{per,0}(\Yc^*))^3$ satisfying ${\cal J}(\Uc^n,\wh{u}^n)\leq {\cal J}(0,0) = 0$ and
			\begin{align*}
			m = \inf_{(\Uc,\wh{u})\in \UU} {\cal J}(\Uc,\wh{u}) = \lim_{n\rightarrow+\infty} {\cal J}(\Uc^n,\wh{u}^n).
			\end{align*}
From step 1, one has
$$ 
\|\Uc^n_1\|_{H^1(\o)}+\|\Uc^n_2\|_{H^1(\o)}+\|\Uc^n_3\|_{H^2(\o)}+\|\wh{u}^n\|_{L^2(\o; H^1(\Yc^*))}\leq C
$$ where the constant does not depend on $n$.\\
			Hence, there exists a  subsequence of $\{(\Uc^n,\wh{u}^n)\}_n$, still denoted $\{(\Uc^n,\wh{u}^n)\}_n$,  such that $$(\Uc^n,\wh{u}^n) \rightharpoonup  (\Uc',\wh{u}')\quad\text{weakly 
			in }\UU \times L^2(\o ; H^1_{per,0}(\Yc^*))^3.$$
				Furthermore, by the lower semi-continuity of ${\cal J}$ it is clear that 
				\begin{align}
					{\cal J} (\Uc',\wh{u}') = \liminf_{n\rightarrow+\infty} {\cal J}(\Uc^n,\wh{u}^n) \leq \lim_{n\rightarrow+\infty} {\cal J}(\Uc^n,\wh{u}^n) \leq m
				\end{align}
				However, since $m = \inf_{(\Uc,\wh{u})\in \UU} {\cal J}(\Uc,\wh{u})$ we conclude that for every $(\Uc,\wh{u})\in \UU \times L^2(\o ; H^1_{per,0}(\Yc^*))^3$ it holds
				\begin{align*}
					{\cal J} (\Uc',\wh{u}')\leq m \leq {\cal J}(\Uc,\wh{u}).
				\end{align*}
				This proves that the infimum is in fact a minimum.
\end{proof}
\begin{theorem}
	Under the assumptions on the forces \eqref{force}-\eqref{C2} we have
	\begin{equation}\label{EQ422}
		m=\lim_{\e\rightarrow 0} \frac{m_\e}{\e^5} = \min_{(\Uc,\wh{u}) \in \UU \times L^2(\o ; H^1_{per}(\Yc^*))^3} \mathcal{J}(\Uc,\wh{u}).
	\end{equation}
\end{theorem}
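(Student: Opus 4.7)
The statement has the form of a $\Gamma$-convergence result for the rescaled energies $\mathcal{J}_\e/\e^5$, so the plan is to establish matching liminf and limsup inequalities and then combine them with the existence of a minimizer for $\mathcal{J}$ proved just above. Concretely, I split the proof into (a) a compactness+lower-bound step along any near-minimizing sequence, and (b) the construction of a recovery sequence for an arbitrary admissible $(\Uc,\wh u)$.

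For the lower bound, I would pick $v_\e\in\GV_\e$ with ${\cal J}_\e(v_\e)\le m_\e+\e^6$. Since $m_\e\le 0$, the chain \eqref{vvtest}-Lemma \ref{lem43} forces $\|\hbox{dist}(\nabla v_\e,SO(3))\|_{L^2(\O^*_\e)}\le C\e^{5/2}$, so the entire Section~5 apparatus applies: we extract a subsequence and limits $(\Uc,\wh u)\in\UU\times L^2(\o;H^1_{per,0}(\Yc^*))^3$ via Lemmas \ref{lemconv1}, \ref{lemconv2} and \ref{lemconve}. Using the unfolding-rescaling change of variable together with the quadratic character of $Q$, I would write
\begin{equation*}
\frac{1}{\e^5}\int_{\O^*_\e}\wh W_\e(\nabla v_\e)\,dx \;=\; \int_{\o\times \Yc^*} Q\!\left(y,\,\frac{1}{\e^2}\TRe(E(v_\e))\right)dx'dy + o(1),
\end{equation*}
and then apply weak lower semicontinuity of the nonnegative quadratic form $Q$ together with the weak convergence \eqref{EQ510} to bound the $\liminf$ by $\int_{\o\times\Yc^*}Q(y,\GE(\Uc)+e_y(\wh u))\,dx'dy$. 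For the force term, the scaling \eqref{force} combined with the unfolding-rescaling operator and the strong convergences \eqref{u-conv} (noting that $\int_{\Yc^*}y_3\,dy=0$ by the symmetry of $\Yc^*$) yields $-|\Yc^*|\int_\o f\cdot\Uc\,dx'$ in the limit. Combining, $\liminf_{\e\to 0} m_\e/\e^5\ge {\cal J}(\Uc,\wh u)\ge m$.

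For the upper bound, by density it suffices to construct, for smooth $(\Uc,\wh u)$ vanishing near $\gamma$, a sequence $v_\e\in\GV_\e$ with $\frac{{\cal J}_\e(v_\e)}{\e^5}\to{\cal J}(\Uc,\wh u)$. Guided by Lemmas \ref{lem41} and \ref{lemconve}, I take the Kirchhoff-Love ansatz augmented by the periodic corrector:
\begin{equation*}
v_\e(x)=x+\begin{pmatrix}\e^2\Uc_1(x')-\e^2 x_3\partial_1\Uc_3(x')\\ \e^2\Uc_2(x')-\e^2 x_3\partial_2\Uc_3(x')\\ \e\,\Uc_3(x')\end{pmatrix}+\e^2\,\wh u\!\left(x',\Big\{\frac{x'}{\e}\Big\},\frac{x_3}{\e}\right),
\end{equation*}
with a cut-off near $\gamma$ so that $v_\e=I_d$ on $\Gamma_\e$. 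For small $\e$ one has $\nabla v_\e\to\GI_3$ in $L^\infty$, so $\det\nabla v_\e>0$. A direct computation, using periodicity to identify $\TRe$ with substitution, shows $\frac{1}{\e^2}\TRe(E(v_\e))\to\GE(\Uc)+e_y(\wh u)$ strongly in $L^2(\o\times\Yc^*)$, so the elastic term converges to the right integral; the force term is handled as in the liminf step. Infimizing over $(\Uc,\wh u)$ gives $\limsup m_\e/\e^5\le m$, and since $m$ is attained this completes the proof.

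The main obstacle is the recovery sequence: one must arrange the $O(\e)$ and $O(\e^2)$ perturbations so that the nonlinear contribution $\frac{1}{2}\nabla u_\e(\nabla u_\e)^T$ in $E(v_\e)$ produces exactly the von-K\'arm\'an quadratic terms $\tfrac12\partial_\alpha\Uc_3\partial_\beta\Uc_3$ in $\Zc_{\alpha\beta}$, while simultaneously respecting the clamping on $\Gamma_\e$, keeping $\det\nabla v_\e>0$, and accommodating the extra corrector contributions ($\Zc$, $\wh\Rc$, $\mathfrak{u}$ and the $y_3|\nabla\Uc_3|^2\Ge_3$ term) displayed in Lemma \ref{lemconve}. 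The density/regularization argument needed to go from smooth $(\Uc,\wh u)$ to general ones in $\UU\times L^2(\o;H^1_{per,0}(\Yc^*))^3$, and the strong convergence verification of $\frac{1}{\e^2}\TRe(E(v_\e))$, are the most delicate computational steps.
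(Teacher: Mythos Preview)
Your overall strategy matches the paper's: a liminf inequality obtained from compactness plus weak lower semicontinuity of the unfolded quadratic energy (the paper's Step~1), and a limsup inequality via an explicit recovery sequence built from a Kirchhoff--Love ansatz with a periodic corrector, first for smooth data and then by density (Step~2).

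There is, however, a genuine gap in your recovery sequence: two scaling factors are off by one power of $\e$, and with the formula as written the limsup step fails.
\begin{itemize}
\item The bending correction must be $-\e\, x_3\,\partial_\alpha\Uc_3(x')$, not $-\e^2 x_3\,\partial_\alpha\Uc_3(x')$. Since $x_3\in(-2\kappa\e,2\kappa\e)$, your choice contributes $x_3\,\partial_{\alpha\beta}\Uc_3\to 0$ to $\e^{-2}\TRe\big(e_{\alpha\beta}(u_\e)\big)$ and the term $-y_3\,\partial_{\alpha\beta}\Uc_3$ in $\GE(\Uc)$ is lost. With the factor $\e x_3$ one recovers $y_3=x_3/\e$ in the limit.
\item The periodic corrector must enter at order $\e^3$, not $\e^2$. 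With $\e^2\wh u(x',\{x'/\e\},x_3/\e)$ the fast-variable derivative produces $\e\,\nabla_y\wh u$ in $\nabla u_\e$, hence a term $\e^{-1}e_y(\wh u)$ in $\e^{-2}\TRe\big(e(u_\e)\big)$, which blows up. With $\e^3\wh u$ the fast gradient is of order $\e^2$ and one obtains exactly $e_y(\wh u)$ in the limit.
\end{itemize}
The paper's ansatz is
\[
V^{(n)}_{\e,\alpha}(x)=x_\alpha+\e^2\Uc^{(n)}_\alpha(x')-\e\, x_3\,\partial_\alpha\Uc^{(n)}_3(x')+\e^3\,\wh u^{(n)}_\alpha\Big(x',\frac{x}{\e}\Big),\qquad
V^{(n)}_{\e,3}(x)=x_3+\e\,\Uc^{(n)}_3(x')+\e^3\,\wh u^{(n)}_3\Big(x',\frac{x}{\e}\Big),
\]
built for smooth approximants $(\Uc^{(n)},\wh u^{(n)})\in\UU\times L^2(\o;H^1_{per}(\Yc^*))^3$ already satisfying the boundary conditions, so no cut-off is required. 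With these two corrections (and absorbing the extra $\tfrac12|\nabla\Uc_3|^2$ appearing in the $(3,3)$ entry of the strain into $\wh u$, which is harmless since one minimizes over all $\wh u$), your argument goes through exactly as in the paper.
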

\begin{proof} To show this result, we use a kind of $\Gamma$-convergence technique. \\[1mm]\noindent{\it Step 1.}  In this step we show that
$$
\min_{(\Uc,\wh{u}) \in \UU \times L^2(\o ; H^1_{per}(\Yc^*))^3} \mathcal{J}(\Uc,\wh{u})\leq \liminf_{\e\rightarrow 0} \frac{m_\e}{\e^5}.
$$	
To show this, let $\{v_\e\}_\e$, $v_\e \in V_\e$, be a minimizing sequence of deformations. It satisfies
$$
\lim_{\e\rightarrow 0} \frac{\mathcal{J}_\e( v_\e)}{\e^5} = \liminf_{\e\rightarrow 0} \frac{m_\e}{\e^5}.
$$
	Without lost of generality, we can assume that the sequence satisfies $\mathcal{J}_\e(v_\e) \leq 0$ and hence the estimates of the previous sections yield
	\begin{align}
		\|dist(\nabla v_\e,SO(3))\|^2_{L^2(\O^*_\e)} \leq  C \e^5\qquad \text{and} \qquad \|(\nabla v_\e)^T\nabla v_\e - \GI_3\|^2_{L^2(\O^*_\e)} \leq C\e^5.
	\end{align}
 Therefore, we are allowed to use the decomposition defined in \ref{DEFElem} and yields the estimates \eqref{est_final1} and convergences as in Lemma \ref{lemconve} and \ref{lemconv2}. Then the assumptions on the force lead to
	\begin{align*}
		 \lim_{\e\rightarrow 0} \frac{1}{\e^5}\int_{\o \times \Yc^*} \TRe (f_\e \cdot (v_\e - I_d)) dx' dy 
		 &= \lim_{\e\rightarrow 0} \frac{1}{\e^5}\int_{\o \times \Yc^*} \TRe (f_\e \cdot u_\e) dx' dy\\
		 &= |\Yc^*| \int_{\o} f \cdot \Uc \, dx',
	\end{align*}
	converging as a product of a weak and a strong convergence. As consequence, we have  with the weak convergence of the strain tensor \ref{EQ510} together with the weak lower semi-continuity of ${\cal J}$ that 
	\begin{align}
		\liminf_{\e \rightarrow 0} \frac{\mathcal{J}_\e(v_\e)}{\e^5} \geq J(\Uc, \wh{u}) - |\Yc^*| \int_{\o} f \cdot \Uc \,dx
	\end{align}
\noindent{\it Step 2.} We show that for every $(\Uc^\prime,\wh{u}^\prime)\in \UU \times L^2(\o ; H^1_{per}(\Yc^*))^3$, one has
\begin{equation}\label{EQ516}
 \limsup_{\e \rightarrow 0} \frac{m_\e}{\e^5} \leq  \mathcal{J}(\Uc^\prime,\wh{u}^\prime).
 \end{equation}
To do that, let $(\Uc^\prime,\wh{u}^\prime)$ be in $\UU \times L^2(\o ; H^1_{per}(\Yc^*))^3$. We will build a sequence $\{v_\e\}_\e$ of admissible deformations such that
$$\limsup_{\e \rightarrow 0} \frac{m_\e}{\e^5} \leq \lim_{n\rightarrow+\infty}\lim_{\e\rightarrow 0} \frac{\mathcal{J}_\e(V_\e^{(n)})}{\e^5} = \mathcal{J}(\Uc^\prime,\wh{u}^\prime).$$
Consider a sequence  $\{\Uc^{(n)}\}_n$ in $\VV \cap \big(\mathcal{C}^1(\overline{\o})^2 \times \mathcal{C}^2(\overline{\o})\big)$ and $\{\wh{u}^{(n)}\}_n$ in  $L^2(\O ; H^1_{per}(\Yc^*))^3\cap 
{\cal C}^1(\overline{\o}\times \overline{\Yc^*})^3$,  where we additionally assume that $\widehat{u}^\prime_{n|x_2=0} = 0$, such that	
\begin{equation}\label{EQ528}
\begin{aligned}
		&\Uc^{(n)}_\alpha \rightarrow \Uc^\prime_\alpha \qquad \text{strongly in } H^1(\o)\\
		&\Uc^{(n)}_3\rightarrow \Uc^\prime_3 \qquad \text{strongly in } H^2(\o)\\
		&\widehat{u}^{(n)}\rightarrow\widehat{u}^\prime \qquad \text{strongly in } L^2(\o ; H^1_{per}(\Yc^*)).
\end{aligned} 
\end{equation}
Now, we show that there exists a sequence $\{v_\e\}_\e$ such that 
$$ {\limsup}_{\e \rightarrow 0} \frac{m_\e}{\e^5} \leq  \mathcal{J}(\Uc^{(n)},\wh{u}^{(n)}).$$
We define the sequence of deformations
	\begin{align*}
		&V_{\e,1}^{(n)}(x) = x_1 + \e^2\Big(\Uc_1^{(n)}(x_1,x_2) - \frac{x_3}{\e} \partial_1\Uc_3^{(n)}(x_1,x_2) + \e \wh{u}_1^{(n)}(x_1,x_2,\frac{x_3}{\e}) \Big)\\
		&V_{\e,2}^{(n)}(x) = x_2 + \e^2\Big(\Uc_2^{(n)}(x_1,x_2) - \frac{x_3}{\e} \partial_2\Uc_3^{(n)}(x_1,x_2) + \e \wh{u}_2^{(n)}(x_1,x_2,\frac{x_3}{\e})\Big)\\
		&V_{\e,1}^{(n)}(x) = x_3 + \e\Big(\Uc_3^{(n)}(x_1,x_2) + \e^2 \wh{u}_3^{(n)}(x_1,x_2,\frac{x_3}{\e})\Big)
	\end{align*}
	and by construction we have $V \in \GV_\e$. Obviously, the deformation can be further restricted to the original structure $\O^*_{\e}$.

 	Now we are interested in the convergences of the deformations $\{V^{(n)}_\e\}_\e$.
 	Note that they satisfy
 	\begin{align*}
 		\|\nabla V_\e^{(n)} - \GI_3 \|_{L^\infty(\O_\e)} \leq C(n) \e,
 	\end{align*}
 	which is why we can assume that $\det(\nabla V_\e^{(n)}) > 0$ for all $n\in \N$ and all $x \in \O^*_\e$ (if $\e$ is small enough). This leads us together with the right-hand-side to
 	\begin{align}\label{lowerbound}
		m_\e \leq \mathcal{J}_\e(V_\e^{(n)}).
 	\end{align}
 Since the convergence of the deformation components are known, we obtain
	 \begin{align*}
	 	\frac{1}{2\e^2} \TRe \big(\big(\nabla  V_\e^{(n)} \big)^T\nabla  V_\e^{(n)} - \GI_3 \big) \longrightarrow \mathbf{E}(\Uc^{(n)}) + e_{y}\big(\wh{u}^{(n)}\big)\quad \hbox{strongly in } 
	 	L^2(\o\times \Yc^*)
	 \end{align*}
	 defined as in Lemma \ref{lemconve}.
 This convergence gives rise to the convergence of the elastic energy 
 	 \begin{align*}
\lim_{\e\rightarrow 0}\frac{1}{\e^5} \mathcal{J}_\e(V_\e^{(n)})
&=\lim_{\e\rightarrow 0}\frac{1}{\e^5} \int_{\o \times \Yc^*} \TRe \big(\widehat{W}\big(y,\nabla V^{(n)}_\e\big)\big) dx' dy\\
&=\lim_{\e\rightarrow 0}\frac{1}{\e^5} \int_{\o \times \Yc^*} \TRe \big(Q\big(y,(\nabla V^{(n)}_\e)^T\nabla V^{(n)}_\e - \GI_3\big)\big) dx' dy\\ 
&=\int_{\o \times \Yc^*}  Q\big(y,\mathbf{E}(\Uc^{(n)}) + e_{y}(\wh{u}^{(n)})\big) dx' dy
	 \end{align*}
	 and the right-hand-side
	 \begin{align*}
	 	\lim_{\e\rightarrow 0}\frac{1}{\e^5} \int_{\o \times \Yc^*} \TRe \big( f_\e \cdot  \big(V^{(n)}_\e - I_d\big)\big) dx' dy \longrightarrow  |\Yc^*| \int_{\o}   f_\e \cdot  \Uc^{(n)} dx' dy
	 \end{align*}
	 Hence with \eqref{lowerbound} we obtain
	 \begin{align*}
	 	\mathop{\lim\sup}_{\e\rightarrow 0} \frac{m_\e}{\e^5} \leq \lim_{\e\rightarrow 0}\frac{1}{\e^5} \mathcal{J}_\e(V_\e^{(n)})=\mathcal{J}\big(\Uc^{(n)},\wh{u}^{(n)}\big).
	 \end{align*}
	 Since this holds for every $n \in \N$, then consider the limit for $n$ to infinity. The strong convergences \eqref{EQ528} yield
	 \begin{align*}
		 \mathop{\lim\sup}_{\e\rightarrow 0} \frac{m_\e}{\e^5}\leq \lim_{n\rightarrow +\infty} \mathcal{J}\big(\Uc^{(n)},\wh{u}^{(n)}\big) = \mathcal{J}\big(\Uc^\prime,\wh{u}^\prime\big),
	 \end{align*}
	 which concludes the proof of \eqref{EQ516}.\\
\noindent{\it Step 3. } Hence, combining both steps we obtain for every $(\Uc^\prime,\wh{u}^\prime) \in \UU \times L^2(\o,H^1_{per}(\Yc^*))$
\begin{align}
	{\cal J}(\Uc,\wh{u}) \leq \liminf_{\e \rightarrow 0} \frac{m_\e}{\e^5}\leq \limsup_{\e \rightarrow 0} \frac{m_\e}{\e^5} \leq {\cal J}(\Uc^\prime, \wh{u}^\prime).
\end{align}
Thus, choosing $(\Uc^\prime,\wh{u}^\prime)=(\Uc,\wh{u})$ gives
$${\cal J}(\Uc,\wh{u}) = \lim_{\e \rightarrow 0} \frac{m_\e}{\e^5}$$
and finally, one obtains
\begin{align*}
	\lim_{\e\rightarrow 0} \frac{m_\e}{\e^5} = {\cal J}(\Uc,\wh{u}) = \min_{(\Uc^\prime, \wh{u}^\prime) \in \UU \times L^2(\o,H^1_{per}(\Yc^*))} {\cal J}(\Uc^\prime, \wh{u}^\prime).
\end{align*}
\vskip-11mm
\end{proof}

\subsection{The cell problems}\label{sec:cellprobl}
Recall the energy \eqref{limenergy}:
\begin{align}
{\cal J}(\Uc,\wh{u}) = \frac{1}{2}\int_\o \int_{\Yc^*} a\,\big(\GE(\Uc) + e_{y}(\widehat{u})\big):\big(\GE(\Uc) + e_{y}(\widehat{u})\big)\; dydx' -|\Yc^*| \int_\o f \cdot \Uc \; dx'.
\end{align}
\noindent To obtain the cell problems consider the variational formulation for $\wh{u}$ associated to the functional ${\cal J}$.  For this we use the Euler-Lagrange equation (since it is a quadratic form in $e(\wh{u})$ over a Hilbert-space) and we obtain:
	\begin{align}
	\begin{split}
	&\text{Find $\wh{u} \in L^2(\O;H^1(\Yc^*))^3$ such that}\\
	&\qquad \int_{\O\times \Yc^*} a \big(\GE(\Uc) + e_y(\widehat{u})\big): e_y(\widehat{w}) \, dy = 0,\qquad \text{for all } \widehat{w}\in L^2(\O;H^1_{per,0}(\Yc^*))^3. 
	\end{split}
	\end{align}
Upon this, we use the periodicity w.r.t. $\Yc^*$ to restrict the cell problems to
\begin{align}
\begin{split}
&\text{Find $\widehat{u} \in H^1_{per,0}(\Yc^*)^3$ such that }\\
&\qquad\int_{\Yc^*} a \big(\GE(\Uc) + e_y(\widehat{u})\big) :e_y(\widehat{w}) \; dy = 0,\qquad \text{for all } \widehat{w}\in H^1_{per,0}(\Yc^*)^3.
\end{split}
\end{align}
  Hence, the fields $\widehat{u}$ depend linearly on $\GE(\Uc)$, one has
\begin{align}
\widehat{u}(x',y) = \sum_{\alpha,\beta=1}^2 \Zc_{\alpha\beta}(x')\,\wh{\chi}^m_{\alpha\beta}(y) + \sum_{\alpha,\beta=1}^2 \partial_{\alpha\beta}\,\Uc_3(x')\widehat{\chi}^b_{\alpha\beta}(y).
\end{align}
This leads directly to the typical cell problems
\begin{align}\label{cell-problem}
\begin{split}
&\text{Find $\left(\widehat{\chi}^m_{11},\wh{\chi}^m_{12},\wh{\chi}^m_{22}, \wh{\chi}^b_{11}, \wh{\chi}^b_{12},\wh{\chi}^b_{22}\right) \in H^1_{per,0}(\Yc^*)^{3\times 6}$ such that }\\
&\qquad\left.
\begin{aligned}
&\int_{\Yc^*} a(y) (M^{\alpha\beta} + e_y(\wh{\chi}^m_{\alpha\beta})):e_y(\widehat{w}) \; dy = 0, \\
&\int_{\Yc^*} a(y) (-y_3M^{\alpha\beta} + e_y(\widehat{\chi}^b_{\alpha\beta})):e_y(\widehat{w}) \; dy = 0, 
\end{aligned}
\right\}
\text{ for all } \widehat{w}\in H^1_{per,0}(\Yc^*)^3,
\end{split}
\end{align}
where we denote
\begin{align}
	M^{11} = \begin{pmatrix}
	1&0&0\\0&0&0\\0&0&0
	\end{pmatrix},\qquad
	M^{22} = \begin{pmatrix}
	0&0&0\\0&1&0\\0&0&0
	\end{pmatrix},\qquad
	M^{12} = M^{21} = \begin{pmatrix}
	0&1&0\\1&0&0\\0&0&0
	\end{pmatrix}.
\end{align}
Then, set the homogenized coefficients
\begin{align}
\begin{split}\label{hom}
&a^{hom}_{\alpha\beta\alpha^\prime\beta^\prime} = \frac{1}{|\Yc^*|} \int_{\Yc^*} a_{ijkl}(y) \left[M^{\alpha\beta}_{ij} + 
e_{y,ij}(\wh{\chi}_{\alpha\beta}^m)\right]M^{\alpha^\prime\beta^\prime}_{kl} dy,\\
&b^{hom}_{\alpha\beta\alpha^\prime\beta^\prime} = \frac{1}{|\Yc^*|} \int_{\Yc^*} a_{ijkl}(y) \left[y_3M^{\alpha\beta}_{ij} + 
e_{y,ij}(\wh{\chi}_{\alpha\beta}^b)\right]M^{\alpha^\prime\beta^\prime}_{kl} dy,\\
&c^{hom}_{\alpha\beta\alpha^\prime\beta^\prime} = \frac{1}{|\Yc^*|} \int_{\Yc^*} a_{ijkl}(y) \left[y_3M^{\alpha\beta}_{ij} + 
e_{y,ij}(\wh{\chi}_{\alpha\beta}^b)\right]y_3M^{\alpha^\prime\beta^\prime}_{kl} dy.
\end{split}
\end{align}
Accordingly, the homogenized energy is defined by
\begin{equation}
\begin{aligned}\label{homo}
{\cal J}^{hom}_{vK} &(\Uc)={1\over |\Yc^*|}{\cal J}(\Uc,\wh{u}) \\
 &\frac{1}{2}\int_\o \big(a^{hom}_{\alpha\beta\alpha^\prime\beta^\prime}\Zc_{\alpha\beta}\Zc_{\alpha^\prime\beta^\prime} + 
b^{hom}_{\alpha\beta\alpha^\prime\beta^\prime}\Zc_{\alpha\beta}\;\partial_{\alpha^\prime\beta^\prime}\Uc_3 + 
c^{hom}_{\alpha\beta\alpha^\prime\beta^\prime}\partial_{\alpha\beta}\Uc_3\;\partial_{\alpha^\prime\beta^\prime}\Uc_3\big) \,dx' -\int_\o f\cdot \Uc \,dx'.
\end{aligned}
\end{equation}
with 
\begin{align*}
	\Zc_{\alpha\beta} = e_{\alpha\beta}(\Uc) + {1\over 2}\partial_\alpha \Uc_3\partial_\beta \Uc_3.
\end{align*}
\begin{thm}
	Under the assumptions \eqref{force}-\eqref{C2} the problem 
	\begin{align}
	\min_{\Uc \in \UU}  {\cal J}_{vK}^{hom} (\Uc)
	\end{align}
	admits solutions. Moreover, one has
$$m=\lim_{\e\rightarrow 0} \frac{m_\e}{\e^5} = \min_{(\Uc,\wh{u}) \in \UU \times L^2(\o ; H^1_{per}(\Yc^*))^3} \mathcal{J}(\Uc,\wh{u})=|\Yc^*|\min_{\Uc \in \UU}  {\cal J}_{vK}^{hom} (\Uc).$$
\end{thm}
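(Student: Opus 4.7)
The plan is to separate the minimization into an inner problem over the microscopic corrector $\wh u$ (for fixed macroscopic displacement $\Uc$) and an outer problem over $\Uc$. The inner problem is a coercive quadratic minimization on the Hilbert space $H^1_{per,0}(\Yc^*)^3$, so it admits a unique minimizer characterized by the Euler--Lagrange equation already displayed in the previous subsection. Taking into account that $\GE(\Uc)$ depends on $x'$ only through the three scalars $\Zc_{11},\Zc_{12},\Zc_{22}$ and the three scalars $\partial_{11}\Uc_3,\partial_{12}\Uc_3,\partial_{22}\Uc_3$, linearity of the cell problem will give
$$
\wh u(x',y)=\sum_{\alpha,\beta=1}^2\Zc_{\alpha\beta}(x')\,\wh\chi^m_{\alpha\beta}(y)+\sum_{\alpha,\beta=1}^2\partial_{\alpha\beta}\Uc_3(x')\,\wh\chi^b_{\alpha\beta}(y),
$$
so that $\wh u\in L^2(\o;H^1_{per,0}(\Yc^*))^3$ is indeed an admissible competitor for $\mathcal J$.

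Next, I would plug this explicit $\wh u$ back into $\mathcal J(\Uc,\wh u)$. Using the variational characterization \eqref{cell-problem} with $\wh w=\wh\chi^m_{\alpha'\beta'}$ and $\wh w=\wh\chi^b_{\alpha'\beta'}$ as test functions, the cross terms collapse and, after integrating in $y$, one recovers precisely the integrand $a^{hom}\Zc\cdot\Zc+2b^{hom}\Zc\cdot\nabla^2\Uc_3+c^{hom}\nabla^2\Uc_3\cdot\nabla^2\Uc_3$ appearing in $\mathcal J^{hom}_{vK}$. Dividing by $|\Yc^*|$ and noting the common linear force term yields the identity
$$
\inf_{\wh w\in L^2(\o;H^1_{per,0}(\Yc^*))^3}\mathcal J(\Uc,\wh w)=|\Yc^*|\,\mathcal J^{hom}_{vK}(\Uc),\qquad\forall\,\Uc\in\UU.
$$
Combining this with the minimization identity already proved in the previous theorem gives
$$
\lim_{\e\to 0}\frac{m_\e}{\e^5}=\min_{(\Uc,\wh u)}\mathcal J(\Uc,\wh u)=|\Yc^*|\inf_{\Uc\in\UU}\mathcal J^{hom}_{vK}(\Uc).
$$

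Finally, to get existence of a minimizer for $\mathcal J^{hom}_{vK}$, I would argue that any minimizer $(\Uc^*,\wh u^*)$ of $\mathcal J$ (whose existence was established in the previous lemma) must have its first component $\Uc^*$ be a minimizer of $\mathcal J^{hom}_{vK}$: otherwise one could decrease $\mathcal J^{hom}_{vK}$ by choosing a better $\widetilde\Uc$, then solve the cell problem to construct the associated $\widetilde{\wh u}$, obtaining a pair $(\widetilde\Uc,\widetilde{\wh u})$ with strictly smaller value of $\mathcal J$, a contradiction. Alternatively, a direct-method proof is immediate: the coercivity estimates \eqref{U3est}--\eqref{Jest} extend verbatim to $\mathcal J^{hom}_{vK}$ because the homogenized tensor inherits positive definiteness from $Q$ via the cell problem (it is the Schur complement of a positive form), so a minimizing sequence is bounded in $H^1(\o)^2\times H^2(\o)$; weak lower semi-continuity of the quadratic part is standard, and the nonlinear term $\partial_\alpha\Uc_3\partial_\beta\Uc_3$ is continuous under weak $H^2$ convergence thanks to the compact embedding $H^2(\o)\hookrightarrow W^{1,4}(\o)$ on the Lipschitz domain $\o$.

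The main technical obstacle I anticipate is the verification that the homogenized quadratic form
$(\Zc,\nabla^2\Uc_3)\mapsto a^{hom}\Zc\cdot\Zc+2b^{hom}\Zc\cdot\nabla^2\Uc_3+c^{hom}\nabla^2\Uc_3\cdot\nabla^2\Uc_3$
is pointwise positive definite on the six-dimensional space of symmetric pairs, so that coercivity of $\mathcal J^{hom}_{vK}$ really follows from \eqref{QPD}. This is done by identifying the form with
$\min_{\wh w\in H^1_{per,0}(\Yc^*)^3}\int_{\Yc^*}a(y)(\GE+e_y(\wh w)):(\GE+e_y(\wh w))\,dy$
for a constant macroscopic datum $\GE$ built from $(\Zc,\nabla^2\Uc_3)$, and then applying Lemma \ref{lem_strain_semi} to rule out the vanishing of the minimum unless $(\Zc,\nabla^2\Uc_3)=0$. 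Once this is in hand, the rest of the argument is routine.
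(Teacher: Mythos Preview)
Your proposal is correct and follows the same route the paper takes: the paper does not give a separate proof for this theorem, but the content of Section~\ref{sec:cellprobl} immediately preceding it \emph{is} the argument---the Euler--Lagrange equation for $\wh u$ at fixed $\Uc$, the linear representation $\wh u=\sum\Zc_{\alpha\beta}\wh\chi^m_{\alpha\beta}+\sum\partial_{\alpha\beta}\Uc_3\,\wh\chi^b_{\alpha\beta}$, and the substitution back into $\mathcal J$ to obtain $|\Yc^*|\,\mathcal J^{hom}_{vK}$. Your write-up makes explicit two points the paper leaves implicit: the identity $\inf_{\wh w}\mathcal J(\Uc,\wh w)=|\Yc^*|\mathcal J^{hom}_{vK}(\Uc)$ for every $\Uc\in\UU$, and the existence of a minimizer for $\mathcal J^{hom}_{vK}$ (which the paper tacitly infers from the existence of a minimizer for $\mathcal J$ established in the preceding lemma). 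Your remark that positive definiteness of the homogenized form follows from Lemma~\ref{lem_strain_semi} via the Schur-complement interpretation is the right justification and is exactly what underlies the paper's coercivity estimate~\eqref{EQ513}.
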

\subsection{Yarn made of isotropic and homogeneous material}
 Let us here assume that the yarns are made from an isotropic and homogeneous material whose Lam\'e's constants are $\lambda$, $\mu$. The following Lemma shows that the homogenized textile is then 
 orthotropic.
		\begin{lemma} Under the above assumption on the material, one has
	\begin{equation}\label{EQ70}
	b^{hom}_{\alpha\beta\alpha^\prime\beta^\prime}=0\qquad \forall (\alpha,\beta,\alpha^\prime,\beta^\prime)\in\big\{1,2\big\}^4
	\end{equation}
	and also
	\begin{equation}\label{EQ700}
	\begin{aligned}
	&a^{hom}_{1111} =a^{hom}_{2222}\qquad \hbox{and}\qquad a^{hom}_{\alpha\alpha12}=0,\qquad \alpha\in \big\{1,2\big\},\\
	&c^{hom}_{1111} =c^{hom}_{2222}\qquad \hbox{and}\qquad c^{hom}_{\alpha\alpha12}=0,\qquad \alpha\in \big\{1,2\big\}
	\end{aligned}
	\end{equation}
\end{lemma}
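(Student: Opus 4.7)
The plan is to derive each identity from a suitable geometric symmetry of the reference cell $\Yc^*$, combined with the two defining features of an isotropic homogeneous material: the stiffness $a_{ijkl}$ is constant, and it satisfies $a(QSQ^\top,QS'Q^\top)=a(S,S')$ for every $Q\in O(3)$. For a reflection $\sigma\in O(3)$ preserving $\Yc^*$, the change of variable $y\to\sigma y$ in the cell problem \eqref{cell-problem}, together with isotropy, shows that the pushforward $T_\sigma\widehat{\chi}(y)\doteq\sigma\,\widehat{\chi}(\sigma y)$ of a corrector $\widehat{\chi}^{m}_{\alpha\beta}$ solves \eqref{cell-problem} with right-hand side $\sigma M^{\alpha\beta}\sigma$ in place of $M^{\alpha\beta}$; for $\widehat{\chi}^{b}_{\alpha\beta}$ the sign flip of $y_3$ under $\sigma$ must be tracked in addition. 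Uniqueness of the cell problem in $H^1_{per,0}(\Yc^*)^3$ then fixes $T_\sigma\widehat{\chi}$ in closed form, yielding the parity of each component in the reflected coordinate, which, substituted into the formulas \eqref{hom}, produces the announced identities by an elementary odd-integrand argument on the symmetric cell.

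For \eqref{EQ70} I take the mid-plane reflection $\sigma=\mathrm{diag}(1,1,-1)$. The cell $\Yc^*$ is invariant under $\sigma$ because the two families of beams have middle lines $(-1)^{q+1}\Phi_\e$ and $(-1)^p\Phi_\e$ whose sign alternations cancel out over the doubled cell. Since $\sigma M^{\alpha\beta}\sigma=M^{\alpha\beta}$ for $\alpha,\beta\in\{1,2\}$ while the sign of $y_3$ flips, the forcing $-y_3 M^{\alpha\beta}$ of $\widehat{\chi}^b_{\alpha\beta}$ is converted into $+y_3 M^{\alpha\beta}$, so uniqueness forces $T_\sigma\widehat{\chi}^b_{\alpha\beta}=-\widehat{\chi}^b_{\alpha\beta}$. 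Consequently the first two components of $\widehat{\chi}^b_{\alpha\beta}$ are odd in $y_3$ and the third is even, hence $e_{y,ij}(\widehat{\chi}^b_{\alpha\beta})$ is odd in $y_3$ for every $(i,j)\in\{1,2\}^2$ and for $(i,j)=(3,3)$. Since the isotropic tensor confines the support of $a_{ijkl}M^{\alpha'\beta'}_{kl}$ to exactly these indices, on every surviving component of the integrand of $b^{hom}_{\alpha\beta\alpha'\beta'}$ the bracketed expression $y_3 M^{\alpha\beta}_{ij}+e_{y,ij}(\widehat{\chi}^b_{\alpha\beta})$ is odd in $y_3$ while $M^{\alpha'\beta'}_{kl}$ is constant, so the integral over the $y_3$-symmetric cell vanishes.

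The identities \eqref{EQ700} follow from two further symmetries. First, $\sigma_1=\mathrm{diag}(-1,1,1)$ preserves $\Yc^*$ via the $y_1=1$ reflection used to build it from $Y^*$ together with $y_1$-periodicity, and satisfies $\sigma_1 M^{11}\sigma_1=M^{11}$, $\sigma_1 M^{22}\sigma_1=M^{22}$, $\sigma_1 M^{12}\sigma_1=-M^{12}$. Uniqueness then yields $T_{\sigma_1}\widehat{\chi}^m_{\alpha\alpha}=\widehat{\chi}^m_{\alpha\alpha}$ and $T_{\sigma_1}\widehat{\chi}^b_{\alpha\alpha}=\widehat{\chi}^b_{\alpha\alpha}$ for each $\alpha\in\{1,2\}$, which forces $e_{y,12}(\widehat{\chi}^{m,b}_{\alpha\alpha})$ to be odd in $y_1$. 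Isotropy reduces the contraction with $M^{12}$ to the entries $(i,j)\in\{(1,2),(2,1)\}$ on which the integrand of $a^{hom}_{\alpha\alpha 12}$ (resp.\ $c^{hom}_{\alpha\alpha 12}$) is odd in $y_1$, so it integrates to zero on the $y_1$-symmetric domain. Second, the swap $P$ exchanging $\Ge_1$ and $\Ge_2$ preserves $\Yc^*$ because the two beam families are interchanged by design, and acts as $PM^{11}P=M^{22}$; uniqueness identifies $T_P\widehat{\chi}^m_{11}=\widehat{\chi}^m_{22}$ and $T_P\widehat{\chi}^b_{11}=\widehat{\chi}^b_{22}$. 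The change of variable $y\to Py$ in the formulas \eqref{hom} for $a^{hom}_{2222}$ and $c^{hom}_{2222}$, combined once more with the isotropy identity, produces $a^{hom}_{1111}=a^{hom}_{2222}$ and $c^{hom}_{1111}=c^{hom}_{2222}$.

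The principal difficulty I anticipate is not analytic but combinatorial: one must verify carefully that the three transformations $\sigma$, $\sigma_1$ and $P$ genuinely preserve $\Yc^*$ as a set, which requires reconciling the parity alternation of the beam mid-lines with the doubled periodicity used to assemble $\Yc^*$ from the quarter-cell $Y^*$. Once this geometric bookkeeping is settled, the three claims reduce to elementary parity arguments and no further functional-analytic obstacle arises beyond the Lax--Milgram uniqueness of the cell problem.
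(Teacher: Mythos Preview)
Your overall strategy---exploit the symmetries of $\Yc^*$ together with the isotropy of $a$ and uniqueness in the cell problem---is exactly right, and your reflection $\sigma_1$ about $y_1=1$ is a genuine symmetry of $\Yc^*$. But the other two transformations you invoke are not: neither the mid-plane reflection $\sigma=\mathrm{diag}(1,1,-1)$ nor the in-plane swap $P$ preserves the woven cell $\Yc^*$. In a plain weave the beam in direction~1 at $y_2=q$ has centerline height $(-1)^{q+1}\Phi(y_1)$ while the beam in direction~2 at $y_1=p$ has height $(-1)^{p}\Phi(y_2)$. Sending $y_3\mapsto -y_3$ turns the beam at $y_2=0$ (height $-\Phi$) into one with height $+\Phi$, which is the profile of the beam at $y_2=1$, not at $y_2=0$; as a subset of $(0,2)^2\times(-2\kappa,2\kappa)$ the image is a half-period translate of $\Yc^*$, not $\Yc^*$ itself. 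Likewise, swapping $y_1\leftrightarrow y_2$ sends the direction-1 beam at $y_2=0$ to a direction-2 object with height $-\Phi(y_2)$, whereas the actual direction-2 beam at $y_1=0$ has height $+\Phi(y_2)$. Your justification that ``the sign alternations cancel out over the doubled cell'' does not rescue this: the two beams in each direction do have opposite signs, but they sit at different $y_2$ (resp.\ $y_1$) positions, so the $y_3$-reflected set does not coincide with the original.

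What \emph{is} a symmetry of the woven cell is the composition of the swap with the $y_3$-flip: $(y_1,y_2,y_3)\mapsto(y_2,y_1,-y_3)$ interchanges the two beam families while simultaneously reversing the over/under pattern, and the two effects cancel. The paper's proof is built around precisely this observation. It first uses the reflections about $y_1=1$ and $y_2=1$ (your $\sigma_1$ and its analogue) to get $b^{hom}_{\alpha\alpha 12}=0$ and to reduce the cell problems to the quarter cell $Y^*$ with mixed boundary conditions; on $Y^*$ it then applies the swap-with-$y_3$-flip to obtain $b^{hom}_{1212}=b^{hom}_{1122}=0$ and $b^{hom}_{1111}=-b^{hom}_{2222}$, and finally a $90^\circ$ rotation $(y_1,y_2,y_3)\mapsto(1-y_2,y_1,y_3)$ of $Y^*$ to obtain $b^{hom}_{1111}=b^{hom}_{2222}$, whence both vanish. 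The identities \eqref{EQ700} fall out of the same transformations. Your parity arguments would go through verbatim once you replace $\sigma$ and $P$ by these correct symmetries, but as written the proposal breaks at the geometric verification you yourself flagged as the principal difficulty.
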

\begin{proof}
	Consider the following transformation:
	$$
	\begin{aligned}
	&\phi\in H^1_{per}(\Yc^*)^3\;\longmapsto \; \wt\phi\in H^1_{per}(\Yc^*)^3\\
	& \wt\phi(y)=-\phi_1(\wt y)\Ge_1+\phi_2(\wt y)\Ge_2+\phi_3(\wt y)\Ge_3\quad \hbox{where}\quad \wt y= (2-y_1)\Ge_1+y_2\Ge_2+y_3\Ge_3,\qquad \hbox{for a.e. } y\in \Yc^*.
	\end{aligned}
	$$ One has
	$$
	\left\{\begin{aligned}
	&e_{y,ii}(\wt\phi)(y)=e_{y,ii}(\phi)(\wt y),\quad i\in\{1,2,3\},\\
	&e_{y,12}(\wt\phi)(y)=-e_{y,12}(\phi)(\wt y),\\
	&e_{y,13}(\wt\phi)(y)=-e_{y,13}(\phi)(\wt y),\\
	&e_{y,23}(\wt\phi)(y)=e_{y,23}(\phi)(\wt y),\\
	\end{aligned}\right.
	\qquad \hbox{for a.e. } y\in \Yc^*.
	$$
	Using this transformation in problem \eqref{cell-problem}$_2$  gives
	\begin{equation}\label{EQ71}
	\left\{\begin{aligned}
	&\wh{\chi}^b_{\alpha\alpha}(2-y_1,y_2,y_3)=\wh{\chi}^b_{\alpha\alpha}(y),\\
	&\wh{\chi}^b_{12}(2-y_1,y_2,y_3)=-\wh{\chi}^b_{12}(y),
	\end{aligned}\right.
	\qquad \hbox{for a.e. } y\in \Yc^*.
	\end{equation}
	Since $\ds\int_{\Yc^*} y_3\, dy=0$, one has
	$$b^{hom}_{\alpha\beta\alpha^\prime\beta^\prime} = \frac{1}{|\Yc^*|} \int_{\Yc^*} \sigma_{\alpha'\beta'} (\wh{\chi}_{\alpha\beta}^b)\,dy.$$
	Hence
	$$b^{hom}_{\alpha\alpha12}=0,\quad \alpha\in\{1,2\}.$$ 
	Now, from \eqref{EQ71}, we get
	\begin{equation}\label{EQ72}
	\left\{\begin{aligned}
	&\wh{\chi}^b_{12,1}(2-y_1,y_2,y_3)=\wh{\chi}^b_{12,1}(y),\\
	&\wh{\chi}^b_{12,2}(2-y_1,y_2,y_3)=-\wh{\chi}^b_{12,2}(y),\\
	&\wh{\chi}^b_{12,3}(2-y_1,y_2,y_3)=-\wh{\chi}^b_{12,3}(y),
	\end{aligned}\right.\qquad \left\{\begin{aligned}
	&\wh{\chi}^b_{\alpha\alpha,1}(2-y_1,y_2,y_3)=-\wh{\chi}^b_{\alpha\alpha,1}(y),\\
	&\wh{\chi}^b_{\alpha\alpha,2}(2-y_1,y_2,y_3)=\wh{\chi}^b_{\alpha\alpha,2}(y),\\
	&\wh{\chi}^b_{\alpha\alpha,3}(2-y_1,y_2,y_3)=\wh{\chi}^b_{\alpha\alpha,3}(y),
	\end{aligned}\right.
	\qquad \hbox{for a.e. } y\in \Yc^*.
	\end{equation} Equality \eqref{EQ72} and the periodicity lead to equalities below of the traces
	$$
	\begin{aligned}
	&\wh{\chi}^b_{12,i}(0,y_2,y_3)=\wh{\chi}^b_{12,i}(1,y_2,y_3)=\wh{\chi}^b_{12,i}(2,y_2,y_3)=0,\qquad i\in\{2,3\},\\
	&\wh{\chi}^b_{\alpha\alpha,1}(0,y_2,y_3)=\wh{\chi}^b_{\alpha\alpha,1}(1,y_2,y_3)=\wh{\chi}^b_{\alpha\alpha,1}(2,y_2,y_3)=0.
	\end{aligned}
	$$
	Now using the symmetry with respect to the plane $y_2=1$, we obtain
	\begin{equation}\label{EQ73}
	\left\{\begin{aligned}
	&\wh{\chi}^b_{\alpha\alpha}(y_1,2-y_2,y_3)=\wh{\chi}^b_{\alpha\alpha}(y),\\
	&\wh{\chi}^b_{12}(y_1,2-y_2,y_3)=-\wh{\chi}^b_{12}(y),
	\end{aligned}\right.
	\qquad \hbox{for a.e. } y\in \Yc^*.
	\end{equation}
	Hence
	$$
	\begin{aligned}
	&\wh{\chi}^b_{12,i}(y_1,0,y_3)=\wh{\chi}^b_{12,i}(y_1,1,y_3)=\wh{\chi}^b_{12,i}(y_1,2,y_3)=0,\qquad i\in\{1,3\},\\
	&\wh{\chi}^b_{\alpha\alpha,2}(y_1,0,y_3)=\wh{\chi}^b_{\alpha\alpha,2}(y_1,1,y_3)=\wh{\chi}^b_{\alpha\alpha,2}(y_1,2,y_3)=0.
	\end{aligned}
	$$ The  results above allow to replace problem \eqref{cell-problem}$_2$ by the following ones:
	\begin{equation}\label{cell-problem-2}
	\begin{aligned}
	&\left\{
	\begin{aligned}
	&\text{Find $\widehat{\chi}^b_{12} \in \GG(\Yc^*)$ such that }\\
	&\int_{\Yc^*} \sigma_{y,ii}(\widehat{\chi}^b_{12})\, e_{y,ij}(\widehat{w}) \, dy =\int_{\Yc^*} y_3\, \sigma_{y,12}(\widehat{w})\, dy, \\
	& \text{ for all } \widehat{w}\in \GG(\Yc^*)
	\end{aligned}
	\right.\\
	&\left\{
	\begin{aligned}
	&\text{Find $\widehat{\chi}^b_{\alpha\alpha} \in \GH(\Yc^*)$ such that }\\
	&\int_{\Yc^*} \sigma_{y,ii}(\widehat{\chi}^b_{\alpha\alpha})\, e_{y,ij}(\widehat{w}) \, dy =\int_{\Yc^*} y_3\, \sigma_{y,\alpha\alpha}(\widehat{w})\, dy, \\
	& \text{ for all } \widehat{w}\in \GH(\Yc^*)
	\end{aligned}
	\right.
	\end{aligned}
	\end{equation}
	where $\Yc^*$ is the part of the cell included in $(0,1)^2\times (-2\kappa,2\kappa)$ and ($i\in\{2,3\}$, $j\in\{1,3\}$)
	$$
	\begin{aligned}
	\GG(\Yc^*)=\big\{ \phi\in H^1(\Yc^*)^3\;|\; \phi_i(0,y_2,y_3)=\phi_i(1,y_2,y_3)=0,\quad \phi_j(y_1,0,y_3)=\phi_j(y_1,1,y_3)=0\big\},\\
	\GH(\Yc^*)=\big\{ \phi\in H^1(\Yc^*)^3\;|\; \phi_1(0,y_2,y_3)=\phi_1(1,y_2,y_3)=0,\quad \phi_2(y_1,0,y_3)=\phi_1(y_1,1,y_3)=0\big\}.
	\end{aligned}
	$$
	Now, consider the transformation
	$$
	\begin{aligned}
	&\phi\in \GH(\Yc^*)\;\longmapsto \;\overline{\phi}\in \GH(\Yc^*),\;\; \hbox{(resp. }\; \phi\in \GG(\Yc^*)\;\longmapsto \;\overline{\phi}\in \GG(\Yc^*)\hbox{)}\\
	&\overline{\phi}(y)=\phi_2(\overline{y})\Ge_1+\phi_1(\overline{y})\Ge_2-\phi_3(\overline{y})\Ge_3\quad \hbox{where}\quad  \overline{y}=y_2\Ge_1+y_1\Ge_2-y_3\Ge_3,\qquad \hbox{for a.e. } y\in \Yc^*.
	\end{aligned}
	$$ One has
	$$
	\begin{aligned}
	e_{y,11}(\overline{\phi})(y)&=e_{y,22}(\phi)(\overline{y}),&&e_{y,22}(\overline{\phi})(y)=e_{y,11}(\phi)(\overline{y}), && e_{y,33}(\overline{\phi})(y)=e_{y,33}(\phi)(\overline{y}),\\
	e_{y,13}(\overline{\phi})(y)&=-e_{y,23}(\phi)(\overline{y}), && e_{y,12}(\overline{\phi})(y)=e_{y,12}(\phi)(\overline{y}), && e_{y,23}(\overline{\phi})(y)=-e_{y,13}(\phi)(\overline{y}),
	\end{aligned}\quad \hbox{for a.e. } y\in \Yc^*.
	$$
	We use the above transformation in problems \eqref{cell-problem-2} that gives
	\begin{equation}\label{EQ74}
	\begin{aligned}
	&\wh{\chi}^b_{12}(y_2,y_1,-y_3)=-\wh{\chi}^b_{12}(y),\\
	&\wh{\chi}^b_{11}(y_2,y_1,-y_3)=-\wh{\chi}^b_{22}(y),
	\end{aligned}
	\qquad \hbox{for a.e. } y\in \Yc^*.
	\end{equation}
	These equalities lead to
	$$b^{hom}_{1212} =b^{hom}_{1122} =0,\qquad b^{hom}_{1111} =- b^{hom}_{2222}.$$
	The last transformation
	$$
	\begin{aligned}
	&\phi\in \GH(\Yc^*)\;\longmapsto \;\overline{\overline{\phi}}\in \GH(\Yc^*),\\
	&\overline{\overline{\phi}}(y)=-\phi_2(\overline{\overline{y}})\Ge_1+\phi_1(\overline{\overline{y}})\Ge_2+\phi_3(\overline{\overline{y}})\Ge_3\quad \hbox{where}\quad  \overline{\overline{y}}=(1-y_2)\Ge_1+y_1\Ge_2+y_3\Ge_3,\qquad \hbox{for a.e. } y\in \Yc^*.
	\end{aligned}
	$$ One has
	$$
	\begin{aligned}
	e_{y,11}(\overline{\overline{\phi}})(y)&=e_{y,22}(\phi)(\overline{\overline{y}}),&&e_{y,22}(\overline{\overline{\phi}})(y)=e_{y,11}(\phi)(\overline{\overline{y}}), && e_{y,33}(\overline{\overline{\phi}})(y)=e_{y,33}(\phi)(\overline{\overline{y}}),\\
	e_{y,13}(\overline{\overline{\phi}})(y)&=-e_{y,23}(\phi)(\overline{\overline{y}}), && e_{y,12}(\overline{\overline{\phi}})(y)=-e_{y,12}(\phi)(\overline{\overline{y}}), && e_{y,23}(\overline{\overline{\phi}})(y)=-e_{y,13}(\phi)(\overline{\overline{y}}),
	\end{aligned}\quad \hbox{for a.e. } y\in \Yc^*.
	$$
	We use the above transformation in problem \eqref{cell-problem-2}$_2$ that gives
	\begin{equation}\label{EQ78}
	\wh{\chi}^b_{11}(1-y_2,y_1,y_3)=\wh{\chi}^b_{22}(y),
	\qquad \hbox{for a.e. } y\in \Yc^*.
	\end{equation}
	This equality gives
	$$ b^{hom}_{1111} =b^{hom}_{2222}$$ which ends the proof of \eqref{EQ70}.
	Similarly one obtains \eqref{EQ700}.
\end{proof}
	As a consequence of the above lemma in the expressions of the energy \eqref{homo} and \eqref{homo2} they remain three coefficients $a^{hom}$ ($a^{hom}_{1111},\; a^{hom}_{1122},\; a^{hom}_{1212}$) and $c^{hom}$ ($c^{hom}_{1111},\; c^{hom}_{1122},\; c^{hom}_{1212}$).\\[1mm]

\subsection{The linear problem}

The analysis presented in this paper is stated especially for the von-K\'{a}rm\'{a}n limit. Although this is a nonlinear model the problem is stated with displacments and not deformations as usual in nonlinear elasticity. In fact the von-K\'{a}rm\'{a}n plate is the critical case for the choice of the geometric energy $\|dist(\nabla v,SO(3))\|_{L^2(\O_\e*)} \sim C\e^{5/2}$ in between linear and nonlinear plates, as it can be seen in \cite{Shell1,Shell2,Friesecke06,CiarletKarman,BCM,NV,FJMKarman}. \\[1mm]
To obtain the linear problem one simply considers the symmetric strain tensor $e(u)$ instead of the Green-Lagrangian strain tensor $e(u) + \frac{1}{2}\nabla u (\nabla u)^T = \frac{1}{2}(\nabla v (\nabla v)^T -\GI_3)$. All results in this paper remain true but the $\Zc_{\alpha\beta}(\Uc)$ are replaced by $e_{\alpha \beta}(\Uc)$ in the limit. 

The resulting linear limit energy 
\begin{align} \label{linear_limenergy}
{\cal J}_{lin}(\Uc,\widehat{u}) = \int_\o \int_{\Yc^*} \widehat{W}\big(y, \GE^{lin}(\Uc) + e_{y}(\widehat{u})\big)\; dydx' -|\Yc^*| \int_\o f \cdot \Uc \; dx',
\end{align}
with 
\begin{align}
	\GE^{lin}(\Uc) = 	\begin{pmatrix}
	e_{11}(\Uc) -y_3 \frac{\partial^2\Uc_3}{\partial x_1^2} & e_{12}(\Uc) -y_3 \frac{\partial^2\Uc_3}{\partial x_1\partial_2} & 0\\
	* & e_{22}(\Uc) -y_3 \frac{\partial^2\Uc_3}{\partial x_2^2} & 0\\
	0 & 0 & 0
	\end{pmatrix} 
\end{align}
found in \eqref{linstrainlimit}.

Then, with the same steps as in section \ref{sec:cellprobl} the cell problems are given by \eqref{cell-problem} and yield the homogenized linear plate equation also found in \cite[Thm. 11.21]{CDG}.

\begin{thm}\label{thhomlin}
	Assume that the force satisfies $f_\e = \e^{2+\nu}f_1\Ge_1 + \e^{2+\nu}f_2\Ge_2 + \e^{3+\nu}f_3\Ge_3$ with $f\in L^2(\o)$ and $\nu>0$. Then, ${\cal J}^{lin}$ is the unfolded limit energy. 
	Furthermore, the cell problems are again given by \eqref{cell-problem} and yield the homogenized energy
	\begin{multline}\label{homo2}
	{\cal J}^{hom}_{lin} (\Uc)=\\
	\frac{1}{2}\int_\o \big(a^{hom}_{\alpha\beta\alpha^\prime\beta^\prime}e_{\alpha\beta}(\Uc)e_{\alpha^\prime\beta^\prime}(\Uc) + 
	b^{hom}_{\alpha\beta\alpha^\prime\beta^\prime}e_{\alpha\beta}(\Uc)\partial_{\alpha^\prime\beta^\prime}\Uc_3 + 
	c^{hom}_{\alpha\beta\alpha^\prime\beta^\prime}\partial_{\alpha\beta}\Uc_3\partial_{\alpha^\prime\beta^\prime}\Uc_3 \big)dx' -\int_\o f\cdot \Uc\, dx'.
	\end{multline}
	The minimizer of this functional satisfies the variational problem
	\begin{multline}
		\text{Find $\Uc \in \UU$ such that for all $\Vc \in \UU$:}\\
			\int_\o a^{hom}_{\alpha\beta\alpha^\prime\beta^\prime}e_{\alpha\beta}(\Uc)e_{\alpha^\prime\beta^\prime}(\Vc) + \frac{b^{hom}_{\alpha\beta\alpha^\prime\beta^\prime}}{2}\left(e_{\alpha\beta}(\Uc)\;\partial_{\alpha^\prime\beta^\prime}\Vc_3 + e_{\alpha\beta}(\Vc)\partial_{\alpha^\prime\beta^\prime}\Uc_3\right)\\ 
	+c^{hom}_{\alpha\beta\alpha^\prime\beta^\prime}\partial_{\alpha\beta}\Uc_3\partial_{\alpha^\prime\beta^\prime}\Vc_3 dx' =\int_\o f\cdot \Vc\, dx'.
	\end{multline}
\end{thm}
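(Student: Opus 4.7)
The plan is to adapt the nonlinear von-K\'arm\'an analysis of the previous theorem to the smaller-force regime by exploiting the extra factor $\e^\nu$, which will suppress every quadratic Green--St.\,Venant contribution in the limit.

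First, I would redo the chain of estimates from Lemmas \ref{lem42}--\ref{lem43} with the new force scaling. Since $f_\e$ is of order $\e^{2+\nu}$ in the in-plane components and $\e^{3+\nu}$ in the vertical one, repeating the computation in \eqref{EQ47} yields, for any sequence $\{v_\e\}$ with $\mathcal{J}_\e(v_\e)\le 0$, the sharpened bound
$$
\|\mathrm{dist}(\nabla v_\e, SO(3))\|_{L^2(\O^*_\e)}\le C\,\e^{5/2+\nu},
$$
and hence $\|e(u_\e)\|_{L^2(\O_\e)}\le C\e^{5/2+\nu}$ via Lemma~\ref{lem42}. Rescaling the decomposition \eqref{eq 13.1bis} by $\e^{5/2+\nu}$ instead of $\e^{5/2}$ preserves the Kirchhoff--Love limit of Lemma~\ref{lemconv1}, while the quadratic contribution $(1/\e^{2+\nu})\TRe(\nabla u_\e(\nabla u_\e)^T)$ that produced the $\Zc_{\alpha\beta}$ correction now vanishes in $L^2(\o\times\Yc^*)$ (it is controlled by $\e^{2\nu}$). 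Consequently Lemma~\ref{lemconve} collapses to the cleaner convergence
$$
\frac{1}{2\e^{2+\nu}}\TRe\bigl((\nabla v_\e)^T\nabla v_\e-\GI_3\bigr)\rightharpoonup \GE^{lin}(\Uc)+e_y(\widehat u)\qquad\text{weakly in }L^2(\o\times\Yc^*)^9,
$$
with $\GE^{lin}$ exactly as in the statement.

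Second, I would reproduce the $\Gamma$-convergence argument of the previous theorem, the only modifications being the rescaling by $\e^{5+2\nu}$ instead of $\e^5$ and the replacement of $\mathbf E(\Uc)$ by $\GE^{lin}(\Uc)$. The liminf inequality follows from the weak lower semicontinuity of the quadratic form $Q$ applied to the unfolded strain. The limsup inequality uses the same recovery sequence, built by density from smooth $(\Uc^{(n)},\widehat u^{(n)})$ as in \eqref{EQ528}, with the bonus that the Jacobian-positivity check for $\nabla V^{(n)}_\e$ is easier since the perturbation is now of order $\e^{1+\nu}$. Existence of a minimizer in $\UU\times L^2(\o;H^1_{per,0}(\Yc^*))^3$ is immediate from strict convexity modulo the rigid-body kernel controlled in Lemma~\ref{lem_strain_semi}, together with the coercivity \eqref{QPD}; indeed, no smallness assumption on $\|f\|_{L^2(\o)}$ is needed because $\mathcal{J}^{lin}$ is purely quadratic plus a linear term.

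Third, the cell problems factor out exactly as in Section~\ref{sec:cellprobl}. Writing the Euler--Lagrange equation of $\mathcal{J}^{lin}$ in the variable $\widehat u$ yields the same linear system \eqref{cell-problem}, now driven by $e_{\alpha\beta}(\Uc)$ and $\partial_{\alpha\beta}\Uc_3$ in place of $\Zc_{\alpha\beta}$ and $\partial_{\alpha\beta}\Uc_3$. Linearity preserves the homogenized tensors $a^{hom}$, $b^{hom}$, $c^{hom}$ defined in \eqref{hom}, giving the energy \eqref{homo2}. The displayed variational problem is then the first-order optimality condition for this coercive quadratic functional on $\UU$, the factor $1/2$ in front of $b^{hom}$ appearing after symmetrization of the mixed bilinear form via the polar identity.

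The main obstacle, in my view, is purely bookkeeping: one must check that every estimate and convergence used in Sections~\ref{sec:prel-est}--\ref{sec:cellprobl} transports an extra $\e^\nu$ factor, and that the smooth test-function construction in the limsup step still respects the clamping on $\Gamma_\e$ in the rescaled regime. Once this is verified, no genuinely new phenomenon appears, and the result reduces to the Kirchhoff--Love plate homogenization already recorded in \cite[Thm.~11.21]{CDG}.
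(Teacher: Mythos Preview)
Your proposal is correct and is, in fact, considerably more detailed than what the paper provides. The paper does not give a formal proof of this theorem: the surrounding text merely states that one ``simply considers the symmetric strain tensor $e(u)$ instead of the Green--Lagrangian strain tensor'', that all results carry over with $\Zc_{\alpha\beta}$ replaced by $e_{\alpha\beta}(\Uc)$, and then defers to \cite[Thm.~11.21]{CDG} and related references for the resulting linear plate homogenization.

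Your route---keeping the nonlinear problem and showing that the extra factor $\e^\nu$ in the forces propagates through Lemmas~\ref{lem42}--\ref{lem43} to give $\|\mathrm{dist}(\nabla v_\e,SO(3))\|_{L^2(\O^*_\e)}\le C\e^{5/2+\nu}$, so that the quadratic part $(\nabla u_\e)(\nabla u_\e)^T$ vanishes at the relevant scale---is the rigorous justification of the paper's informal ``replace the strain tensor'' remark. It also yields two bonuses the paper does not state: first, the smallness hypothesis~\eqref{C2} on $\|f\|_{L^2(\o)}$ becomes superfluous because the term $C\e^{\nu}\|f\|$ replacing $C\|f\|$ in the absorption step is small for $\e$ small regardless of $f$; second, the limit functional is strictly convex, so the minimizer is unique, which you correctly note. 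The remainder of your plan (rerunning the $\Gamma$-convergence with the $\e^{5+2\nu}$ rescaling and observing that the cell problems~\eqref{cell-problem} are unchanged) is exactly what the paper's sketch implies.
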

Note that this is the same energy as for the problem presented in \cite[Ch. 11]{CDG} for the case $\theta = 1$.
The existence and uniqueness of a solution for this linear problem is for instance investigated in \cite{CDG,Ciarlet3,oleinik2009mathematical,ciarlet1997mathematical}.

\begin{remark}
	The shown derivation of a homogenized von-K\'{a}rm\'{a}n plate is also valid for other micro-structures for which the extension in section \ref{sec:prelim_ext} holds true, e.g. shells whose mid-surfaces are developable surfaces.
\end{remark}

\begin{remark}
	It is also possible to derive the von-K\'{a}rm\'{a}n plate on the level of deformations and the decomposition of deformation, see \cite{Shell1,Shell2}. However, this needs a more involved analysis of the decomposed fields, since there exist more degrees of freedom. This different approach yields some insights into nonlinear elasticity and the connection between nonlinear decomposition and linear decompositions (see also \cite{Shell2,GDecomp}), yet the result is the same as presented here.
\end{remark}

\section{Comparison to \cite{KT}}

In fact, the cell problems derived here are the same in \cite{KT}. However, it is not obvious on the first sight because of a different point of view therein. The coincidence of both cases can be explained by the fact, that the homogenization presented here is also valid for the problem stated in \cite{KT} with fixed junctions between the beams, i.e. the gap-function $g \equiv 0$, for linear elasticity. Besides replacing $\Zc_{\alpha\beta}$ by $e_{\alpha\beta}$ the result analogous to \eqref{homo}.

\section{Stability of a plate with von-K\'{a}rm\'{a}n energy}	

	Here, we give an example of the pre-strain in an orthotropic plate. 
	Linearization of the K{\'a}rm{\'a}n plate was considered in \cite{LM}, \cite{Puntel},\cite{berd}, under an assumption of scalar fields and that the in-plane elastic strain is equal to the given in-plane strain $ e^*$. 

	\subsubsection*{ Homogenization of the  in-plane pre-stress} \label{statprhom}
		In counterparts of \ref{stpr}, we can take  a given pre-stress in yarns, $ \ds{\sigma^*_{ij,\e}}(x):=\e^2 a_{ijkl}(\frac{x}{\varepsilon})  e^*_{kl,\e}(x)$, due to a thermal, chemical or electric expansion, with $e^*_{kl,\e}\in {L}^2(\O^*_\e)$, $(k,l)\in\{1,2,3\}^2$.
		We assume that 
		\begin{align}\label{limf}
		{\cal T}_{\e}\left(e^*_{\e}\right) \wc  e^* \quad \textrm{weakly in} \quad L^2(\omega\times \Yc^*;\R^{3\times 3}).
		\end{align}
	In \ref{stpr}, we replace the term $\ds \int_{\O^*_{\e}} f_{\e}\cdot(v-I_d)\, dx$ by the following:
	$$\int_{\O^*_{\e}} \e^2\; a\; e^*_{\e}\,:\, e(v-I_d)\, dx.$$	
	Then, as in Subsection \ref{SS53}, we prove that there exists a constant $C^*$ such that, if  $\|e^*_{\e}\|_{L^2(\O^*_\e)}\leq C^*\sqrt\e$ then for every $v\in \GV_{\e}$  such that $J_{\e}(v) \leq 0$ one has
	\begin{align*}
	\|dist(\nabla v, SO(3))\|_{L^2(\O^*_\e)} \leq C^{**} \e^{5/2}
	\end{align*}
	where the constants $C^*$ and $C^{**}$ do not depend on $\e$. Proceeding as in Section 6, we obtain the limit functional
	$$
	{\cal J}(\Uc,\wh{u}) = \frac{1}{2}\int_{\o\times \Yc^*} a\,\big(\GE(\Uc) + e_{y}(\widehat{u})\big):\big(\GE(\Uc) + e_{y}(\widehat{u})\big)\; dydx' -\frac{1}{2}\int_{\o\times 
	\Yc^*}a\; e^* : 
	\big(\GE(\Uc) + e_{y}(\widehat{u})\big)\, dx'dy.
	$$
	Now, we search the field $\widehat{u}$ with an additional corrector, responsible for the right-hand side, to homogenize the pre-stress, similar to  \cite{homel}. 
	Let $\widehat{\chi}^p$ be in  $L^2(\o;H^1_{per,0}(\Yc^*))^3$  the solution of
	$$
	\int_{\Yc^*}   a \,e_y(\wh{\chi}^p) : e_y(\widehat{w}) \; dy = \int_{\Yc^*} a\, e^*(x,\cdot) : e_y(\widehat{w}) \, dy,\qquad \text{for a.e. $x\in \o$ and for all } \widehat{w}\in 
	H^1_{per,0}(\Yc^*)^3.
	$$ One has
	\begin{align}
	\widehat{u}(x',y) = \sum_{\alpha,\beta=1}^2 \Zc_{\alpha\beta}(x') \wh{\chi}^m_{\alpha\beta}(y) + \sum_{\alpha,\beta=1}^2 
	\partial_{\alpha\beta}\,\Uc_3(x')\,\widehat{\chi}^b_{\alpha\beta}(y)+\wh{\chi}^p(x',y),\quad \hbox{for a.e. }(x',y)\in \o\times \Yc^*.
	\end{align}
Then, the effective pre-strain is
\begin{align}
&a_{\alpha \beta \alpha' \beta'}^{hom} (e_{\alpha \beta }^{*})^{hom}(x')= \frac{1}{|\cal \Yc^*|}\int_{\cal \Yc^*}   
a(y)   e^*(x',y) : \left({\mathbf M}^{\alpha' \beta'}+ e_{ y}(\wh{\chi}^p)(x',y)\right)  dy.
\end{align}

So, in te limit, the force functional will be   replaced by
\begin{equation}
|\Yc^*|\int_\omega a^{hom}(e^{*})^{hom}(x'):\Zc(x')\;dx',
\end{equation} 
with
\begin{align}
	\Zc = \begin{pmatrix}
	\Zc_{11} & \Zc_{12}\\
	* & \Zc_{22}
	\end{pmatrix},
\end{align}
Note that the macroscopic pre-stress will act in-plane, i.e. there will be no entry in the bending term from this pre-stress, because of the rotational symmetry of the periodicity cell.\\
Note, that since the minimization of a functional is up to a given additive term, the pre-strain can be insert into the left-hand side (energetic part) of the functional and substructed from the elastic strain.

\subsubsection*{Uni-axial compression}

 Now, all displacements must satisfy  the following inhomogeneous Dirichlet conditions:
		\begin{align}
		\Uc(0,x_2) = \begin{pmatrix}
		\ds{e^*L\over 2}\\[2mm] 0 \\[2mm] 0
		\end{pmatrix},\qquad \Uc(L,x_2) = \begin{pmatrix}
		\ds -{e^*L\over 2}\\[2mm] 0\\[2mm] 0
		\end{pmatrix}, \qquad \text{for a.e.  } x_2 \in (0,L).
		\end{align}
		Set
		\begin{align}
		\widetilde{\Uc}(x') = e^*\Big({L\over 2}-x_1\Big)\Ge_1\quad \hbox{for a.e. } x'=(x_1,x_2)\in \o.
		\end{align} This displacement satisfies the above Dirichlet conditions and one has 
		$$ e(\widetilde{\Uc})=\begin{pmatrix}
	-e^* & 0 \\
	    0 & 0
	\end{pmatrix}.$$
	Denote
		\begin{align*}
		\UU_{New}= \left\{ \Uc=\big(\Uc_1,\Uc_2,\Uc_3\big) \in  H^1(\o)^2 \times H^2(\o)\; \mid\; \Uc=0,\; \partial_1 \Uc_3 = 0\quad \hbox{a.e.  on } \Gamma_D \right\},\quad \Gamma_D=\{0,L\}\times (0,L).
		\end{align*}
		Now, our aim is to minimize over $\UU_{New}$ the functional
		\begin{equation}
		\begin{aligned}\label{homoNEW}
		{\cal J}^{hom}_{vK}  (\Uc+\widetilde{\Uc})&= \frac{1}{2}\int_\o \Big(a^{hom}_{1111}\big((\Zc'_{11})^2+(\Zc'_{22})^2\big)+ 4a^{hom}_{1212}(\Zc'_{12})^2 +2a^{hom}_{1122}\Zc'_{11}\Zc'_{22}\\
		&+ c^{hom}_{1111}\big((\partial_{11}\Uc_3)^2+(\partial_{22}\Uc_3)^2\big)+4c^{hom}_{1212} (\partial_{12}\Uc_3)^2+2c^{hom}_{1122} \partial_{11}\Uc_3\;\partial_{22}\Uc_3\Big)\, dx',\\
		&\quad \Uc\in \UU_{New} 
		\end{aligned}
		\end{equation}
		with here
		\begin{align*}
		\Zc'_{\alpha\beta} = e_{\alpha\beta}(\Uc+\widetilde{\Uc}) + \partial_\alpha \Uc_3\partial_\beta \Uc_3=\Zc_{\alpha\beta}+\Zc_{\alpha\beta}^*
		\end{align*} where
		$$\Zc^*_{\alpha\beta} = e_{\alpha\beta} (\widetilde{\Uc}) .$$
Again, we want to solve the minimization problem 
$$
\left\{\begin{aligned}
&\hbox{Find $\Uc^*\in \UU_{New}$ such that}\\
&\min_{\Uc\in \UU_{New}} {\cal J}^{hom}_{vK}  (\Uc+\widetilde{\Uc})={\cal J}^{hom}_{vK}  (\Uc^*+\widetilde{\Uc}).
\end{aligned}\right.
$$
We know that the infimum of this functional on $\UU_{New}$ is reached. Here, we want to get buckling. This means that the solutions of the above problem are not only in-plane displacements.

Suppose that there is no buckling. This means that the solutions of the above minimization problem are in fact the solution of the following minimization problem:
$$
\left\{\begin{aligned}
&\hbox{Find $\Uc^{*lin}\in \UU_{New}$ such that}\\
&\min_{\Uc\in \UU_{New}} {\cal J}^{hom}_{lin}  (\Uc+\widetilde{\Uc})={\cal J}^{hom}_{lin}  (\Uc^{*lin}+\widetilde{\Uc})
\end{aligned}\right.
$$ where
$$ 
{\cal J}^{hom}_{lin}  (\Uc+\widetilde{\Uc})= \frac{1}{2}\int_\o \Big(a^{hom}_{1111}\big((e_{11}(\Uc+\widetilde{\Uc}))^2+(e_{22}(\Uc+\widetilde{\Uc}))^2\big)+ 
4a^{hom}_{1212}(e_{12}(\Uc+\widetilde{\Uc}))^2 +2a^{hom}_{1122}e_{11}(\Uc+\widetilde{\Uc})e_{22}(\Uc+\widetilde{\Uc})\Big)\, dx'
$$
The above minimization problem admits a unique solution (an in-plane displacement 
$\Uc^{*lin}$). We have
$$0<{\cal J}^{hom}_{lin}  (\Uc^{*lin}+\widetilde{\Uc})=C^*(e^*)^2\leq {\cal J}^{hom}_{lin}  (\widetilde{\Uc})=  (e^*)^2a^{hom}_{1111}L^2\footnotemark.
$$
\footnotetext{ To get the exact value of the constant $C^*$ we have to solve the corresponding linear problem.}
Now, if we find a displacement of  type $\Vc_3\Ge_3$ such that
$${\cal J}^{hom}_{vK}  (\Vc_3\Ge_3+\widetilde{\Uc})<{\cal J}^{hom}_{lin}  (\Uc^{*lin}+\widetilde{\Uc})={\cal J}^{hom}_{vK}(\Uc^{*lin}+\widetilde{\Uc})$$  this will show that there is buckling.\\[1mm]
To do this, we seek a flexion independent of $x_2$ (taking into account the boundary conditions)
$$\Vc_3(x')=V_3(x_1)\quad \hbox{for a.e. } x'=(x_1,x_2)\in \o$$ with $V_3\in H^2_0(0,L)$ in order to get an admissible displacement ($\Vc_3\Ge_3\in \UU_{New}$). Hence, one has
$$
\begin{aligned}
&e_{11}(\widetilde{\Uc}) + (\partial_1 \Vc_3)^2=-e^*+(V'_3(x_1))^2,\\
&e_{12}(\widetilde{\Uc}) + \partial_1 \Vc_3\partial_2 \Uc_3= 0,\quad e_{22}(\widetilde{\Vc}) + \partial_2 \Vc_3\partial_2 \Uc_3=0,\\
&(\partial_{11}\Vc_3)^2=(V''_3(x_1))^2,\quad (\partial_{22}\Vc_3)^2=0,\\
& (\partial_{12}\Vc_3)^2=0,\quad  \partial_{11}\Vc_3\;\partial_{22}\Vc_3= 0.
\end{aligned}
$$ 
That gives
$$
\begin{aligned}
&{\cal J}^{hom}_{vK}  (\Vc_3\Ge_3+\widetilde{\Uc}) = \frac{L}{2}\int_0^L a^{hom}_{1111}\big((e^*)^2-2e^*(V'_3(x_1))^2+(V'_3(x_1))^4\big) + c^{hom}_{1111}(V''_3(x_1))^2dx_1.
\end{aligned}
$$
A necessary condition to obtain a buckling is ${\cal J}^{hom}_{vK}  (\Vc_3\Ge_3+\widetilde{\Uc}) < (e^*)^2a^{hom}_{1111}L^2$. Hence
$$\int_0^L a^{hom}_{1111}(V'_3(x_1))^4dx_1+\int_0^L c^{hom}_{1111}(V''_3(x_1))^2dx_1< 2e^*\int_0^L a^{hom}_{1111}(V'_3(x_1))^2dx_1.$$
Choose the function
$$V_3(x_1)=\sin^2\Big({\pi x_1\over L}\Big)\qquad\hbox{for all }x_1\in [0,L].$$
A straight forward calculation leads to
$$ a^{hom}_{1111} {3\pi^4\over 8L^3} + c^{hom}_{1111}{2\pi^4\over L^3} < 2e^* a^{hom}_{1111}{\pi^2\over 2L}.$$
Thus
$$e^* >{\pi^2 \over L^2}{3a^{hom}_{1111} +16 c^{hom}_{1111}\over  8 a^{hom}_{1111}}.$$
To get a sufficient condition we need to know a lower bound of $C^*$.
	\begin{remark}
	A weaker condition on $e^*$, which recovers conditions as in \cite{berd}, is given by the bound from below on the energy.
	\begin{align}
	&{\cal J}^{hom}_{vK}  (\Vc_3\Ge_3+\widetilde{\Uc})\geq\frac{L}{2}\int_0^L \left[ c^{hom}_{1111} -e^*\frac{L^2}{2\pi^2}a^{hom}_{1111} \right](V''_3(x_1))^2dx_1.
	\end{align} 
	where we used the Poincar\'e inequality 
	\begin{align}
	\int_0^L (V'_3(x_1))^2\;dx_1 \leq \frac{L^2}{(2\pi)^2} \int_{0}^{L} (V''_3(x_1))^2 \; dx_1.
	\end{align}
%	and the fact, that we have compression, i.e., $c^{hom}_{1111} -e^*\frac{L^2}{2\pi^2}a_{1111}<0$.\\
Then, we have
	\begin{align}
		(e^*)^2a^{hom}_{1111}L^2 > {\cal J}^{hom}_{vK}  (\Vc_3\Ge_3+\widetilde{\Uc}) \geq \frac{L}{2}\int_0^L \left[ c^{hom}_{1111} -e^*\frac{L^2}{2\pi^2}a^{hom}_{1111} \right](V''_3(x_1))^2dx_1
	\end{align}
A necessary condition to get buckling  is 
 $$\frac{L}{2}\int_0^L\left[ c^{hom}_{1111} -e^*\frac{L^2}{2\pi^2}a_{1111} \right](V''_3(x_1))^2dx_1 < 0.$$
That gives
	\begin{align}
	e^* > \frac{\pi^2c^{hom}_{1111}}{2 L^2 a^{hom}_{1111}}.
	\end{align}
	\end{remark}

For the sufficient condition, note that the coercivity yields
\begin{align}
	c \|e(\Uc^{*lin} - \wt\Uc)\|^2 \leq {\cal J}_{lin}^{hom} (\Uc^{*lin} - \wt{\Uc}).
\end{align}
Furthermore, note that $\|e(\Uc^{*lin} - \wt\Uc)\| > 0$ since the fields satisfy different boundary conditions. Indeed, we know that by the Korn-inequality, the trace-estimation
\begin{align}
	\|e(\Uc^{*lin} - \wt\Uc)\|_{L^2(\o)} \geq c \|\Uc^{*lin} - \wt\Uc\|_{H^1(\o)} \geq c\|\Uc^{*lin} - \wt\Uc\|_{L^2(\Gamma)} \geq c L^2e^* > 0
\end{align}
where $c$ does not  depend on $\o$.
\bigskip

\end{document}